\newcommand{\mf}{\mathfrak}
\newcommand{\Hom}{\operatorname{Hom}}
\newcommand{\Z}{\mathbb{Z}}
\newcommand{\Q}{\mathbb{Q}}
\newcommand{\R}{\mathbb{R}}
\newcommand{\C}{\mathbb{C}}
\newcommand{\pC}{\mathcal{C}}
\renewcommand{\mod}{\operatorname{mod}}
\newcommand{\End}{\operatorname{End}}
\newcommand{\bgg}{\Delta}
\newcommand{\demazure}{D}
\newcommand{\grhecke}{\mathcal{H}}
\newcommand{\rhecke}{H_n(\Gamma)}
\newcommand{\ahecke}{\mathscr{H}}     %affine Hecke algebra
\newcommand{\idhecke}{\ahecke^h}             %Interpolating Hecke algebra
\newcommand{\ihecke}{\ahecke^{h_0}}
\newcommand{\dhecke}{\mathbb{H}}      %degenerate affine Hecke algebra
\newcommand{\whecke}{{}_\lambda \ihecke_\lambda}
\newcommand{\nhecke}{{}^{h_0}\ahecke}
\newcommand{\aA}{\mathscr{A}}         %commutative subalgebra of affine Hecke algebra
\newcommand{\dA}{\mathbb{A}}          %commutative subalgebra of degenerate affine Hecke algebra
\newcommand{\iA}{\aA^{h_0}}                   %commutative subalgebra of interpolating Hecke algebra
\newcommand{\idA}{\aA^h}
\newcommand{\aT}{\mathscr{T}}         %weights of affine Hecke algebra
\newcommand{\dT}{\mathfrak{h}}          %weights of degenerate affine Hecke algebra
\newcommand{\iT}{\aT^{h_0}}                   %weights of interpolating Hecke algebra
\newcommand{\aS}{\mathscr{S}}
\newcommand{\fhecke}{\mathscr{H}^f}   %finite Hecke algebra
\newcommand{\lhecke}{\dot{\ihecke}}%localized affine Hecke algebra
\newcommand{\lA}{\dot{\iA}}   %localized commutative subalgebra
\newcommand{\qhecke}{\ihecke(G)}  %quiver Hecke algebra
\newcommand{\qheckeh}{\ihecke(H)}
\newcommand{\qA}{\iA}
\newcommand{\lqA}{\lA}
\newcommand{\qT}{\iT}
\newtheorem{theorem}{Theorem}[subsection]
\newtheorem{lemma}[theorem]{Lemma}
\newtheorem{proposition}[theorem]{Proposition}
\newtheorem{corollary}[theorem]{Corollary}
\newtheorem{claim}[theorem]{Claim}
\newtheorem*{definition}{Definition}
\theoremstyle{definition}
\theoremstyle{remark}
\newtheorem{remark}[theorem]{Remark}
\title{Affine Hecke algebras and quiver Hecke algebras}
\author{Rob Denomme}
\date{}                                           % Activate to display a given date or no date
\begin{document}
\maketitle

\section*{Abstract}
We give a presentation of localized affine and degenerate affine Hecke algebras of arbitrary type in terms of weights of the polynomial subalgebra and varied Demazure-BGG type operators. 
We offer a definition of a graded algebra $\grhecke$ whose category of finite-dimensional ungraded nilpotent modules is equivalent to the category of finite-dimensional modules over an associated degenerate affine Hecke algebra.
Moreover, unlike the traditional grading on degenerate affine Hecke algebras, this grading factors through central characters, and thus gives a grading to the irreducible representations of the associated degenerate affine Hecke algebra.
This paper extends the results
\cite[Theorem 3.11]{rouquier-qha}, and \cite[Main Theorem]{brundan-kleshchev} where the affine and degenerate affine Hecke algebras for $GL_n$
are shown to be related to quiver Hecke algebras in type $A$, and also secretly carry a grading.

\section*{Introduction}
The representation theory of affine and degenerate affine Hecke algebras has a rich and continuing history. 
The work of Kazhdan and Lusztig in \cite{kazhdan-lusztig} (c.f. also  \cite{ginzburg}) gives a parametrization and construction of irreducible modules over an affine Hecke algebra with equal parameters which aren't a root of unity. 
This parametrization is in the spirit of the Langlands program, and is carried out by constructing a geometric action of the Hecke algebra on equivariant (co)homology and $K$-theory of various manifolds related to the flag variety.
Moreover, character formulas for irreducible representations are deduced from this theory, giving a satisfactory geometric understanding of the representation theory of such algebras.
Unfortunately, for unequal parameters the geometric approach has not yielded as much progress.

More recently, the categorification of quantum groups has given a renewed interest to the theory of affine Hecke algebras of type $A$.
It is shown in \cite[Main Theorem]{brundan-kleshchev}, \cite[Theorem 3.16, 3.19]{rouquier-2km} that a localization of the affine Hecke algebra $\mathscr{H}_n$ of $GL_n$ at a maximal central ideal is isomorphic to a localization of a quiver Hecke algebra $H_\lambda$ associated to a finite or affine type $A$ Cartan matrix at a corresponding central ideal.
This coincidence has been used in \cite{mcnamara}, \cite{kleschev-ram} to give a new approach to classifying the irreducible representations of these algebras for parameters $q$ which aren't a root of unity, as well as understanding the homological algebra of their representation categories.
This algebraic approach replaces the set-up of the Langlands program with the main categorification result, that quiver Hecke algebras categorify quantum groups \cite{khovanov-lauda, rouquier-2km}.
Even further, this work shows that the affine Hecke algebras of type $A$ carry a secret grading which recovers the quantum variable in the decategorification.
It is a natural, and important question to ask if these techniques may be used for affine Hecke algebras in other types.

This paper defines a graded algebra $\grhecke$ associated to any simply connected semisimple root datum and arbitrary parameters whose category of (ungraded) finite modules with a nilpotence condition is equivalent to the category of finite modules over the associated degenerate affine Hecke algebra (see Section \ref{graded-hecke-algebra}). 
Thus degenerate affine Hecke algebras in other types and with unequal parameters are  secretly graded as are those of type $A$.
The presentation of $\grhecke$ is a natural analogue of a quiver Hecke algebra, but with the symmetric group $\mathfrak{S}_n$ replaced with the Weyl group of the root datum, thus we refer to $\grhecke$ as a quiver Hecke algebra as well.
In this way we generalize \cite[Theorem 3.16]{rouquier-2km}, and give a grading on finite-dimensional irreducible representations of degenerate affine Hecke algebras.
It is unclear if the algebras $\grhecke$ are in fact related to the geometry of quiver-type varieties, but the considerable applications of the the theory of quiver Hecke algebras gives cause for their study.
Moreover, there are many natural questions to be asked about the algebra $\grhecke$. 
Could the graded characters of an irreducible module could be computed from the geometric standpoint mentioned above?
Are there natural graded cyclotomic quotients of $\grhecke$?
Could one give an algebraic parametrization of the irreducible $\grhecke$-modules and offer an algebraic construction of them following the work of \cite{mcnamara}?

It should be noted that this paper uses localizations where other authors, \cite{lusztig}, \cite{savage} use completions. 
Our approach is rooted in finding a graded version of the degenerate affine Hecke algebras, which does not need the machinery of completions, and is perhaps a simpler approach in the first place.

We now briefly summarize the results of the paper.
Fix a simply connected semisimple algebraic group over a field $k$, and let $h_0\in k$ be a parameter.
In this paper we define a locally unital localized quiver Hecke algebra $\qhecke$ associated to a data $G$ with generators and relations. 
The algebra is given as a direct sum over Weyl group orbits in the dual to a maximal torus when $h_0\not=0$ and over Weyl group orbits in the dual space to a Cartan subalgebra when $h_0=0$:
\begin{align*}
\qhecke = \bigoplus_{\Lambda\in \iT / W} \qhecke_\Lambda.
\end{align*}
We associate a data $G$ and parameter $h_0\in k$ to the affine and degenerate affine Hecke algebras $\ahecke, \dhecke$ associated with this group where $h_0=0$ in the degenerate case.
We then define a non-unital localization, $\lhecke$ of $\ahecke, \dhecke$ and produce an isomorphism,
\begin{align*}
\qhecke\xrightarrow{\sim} \lhecke,
\end{align*}
which generalizes \cite[Theorem 3.11, 3.12]{rouquier-qha}.
The graded version of a degenerate affine Hecke algebra, $\grhecke$, is defined as a subalgebra of $\qhecke$ with $h_0=0$, and we give a separate presentation of this algebra with generators and relations.

In the last section \ref{applications}, we define the \emph{quiver Hecke algebra} $\grhecke$ associated to a degenerate affine Hecke algebra $\dhecke$.
This is a graded algebra whose category of finite-dimensional ungraded nilpotent representations is equivalent to that of $\dhecke$.
In fact, we show in this section that every irreducible ungraded representation of $\grhecke$ has a grading whose graded character is invariant under inverting the grading.
In section \ref{weight-induction} we study the representation theory of $\qhecke$ one weight space at a time.
A crucial tool is the PBW-basis given in Theorem \ref{PBW}.
Using this basis along with a few simple results on the action of the Weyl group on the torus we provide two algebraic constructions of all irreducible representations which have a non-zero eigenspace $V_\lambda$ with $\lambda$ a standard parabolic weight, both in the equal and unequal parameters case.
A highlight of this study is the structure of the so-called \emph{weight Hecke algebra}, $\whecke$, which turns out to be a matrix ring for $\lambda$ a standard parabolic.
This recovers and extends a well known result of Rodier in the case that $\lambda$ is $W$-invariant, as well as a result of Bernstein-Zelevinsky in the case that $\lambda$ is regular.
This chapter includes an example algebraic computation of the graded characters of each irreducible representation of a degenerate affine Hecke algebra of type $SL_3$ with a specific central character.

We hope our construction can be used to obtain new and algebraic insights on representations of (degenerate) affine Hecke algebras at unequal parameters, where geometric methods are missing.

\tableofcontents

\section{Affine Hecke algebras}
\subsection{Bernstein's presentation}
We recall the Bernstein presentation of affine Hecke algebras, following \cite{lusztig}. Let 
$(X,Y,R,\check{R},\Pi)$ be a (reduced) simply connected semisimple root datum.
Thus $X,Y$ are finitely generated free abelian
groups in perfect pairing we denote by $\langle , \rangle$. Further, the finite subsets
$R\subset X$, $\check{R}\subset Y$  of roots and coroots are in a given bijection $\alpha\mapsto \check{\alpha}$. 
The set $R$ is invariant under the simple reflections, $s_\alpha\in \operatorname{GL}(X)$, which are  
given by,
\[
s_\alpha(x) = x-\langle x,\check{\alpha}\rangle \alpha.
\]
Similarly, it is required that $\check{R}$ be invariant under $s_{\check{\alpha}}$, defined by,
\[
s_{\check{\alpha}}(y) = y - \langle y,\alpha\rangle \check{\alpha}.
\]
Denote by $W\subset GL(X)$ the finite Weyl group of the system, and $\Pi\subset R$ a root basis.
As the root system is reduced, the only multiples of a root $\alpha$ which are also roots are $\pm \alpha$.
Lastly, the root datum being simply connected means $X$ contains the fundamental weights $\{\omega_\alpha\}_{\alpha\in \Pi}$ defined as follows.
Given $\alpha\in \Pi$ let $\omega_\alpha\in \Q\cdot R\subset \Q \otimes_\Z X$ be defined by $\langle \omega_\alpha,\check{\beta}\rangle = \delta_{\alpha,\beta}$ for all $\beta\in \Pi$.
The assumption that the root system is simply connected simplifies a number of the formulas in \cite{lusztig}, in particular $\check{\alpha}\not\in 2Y$ for any $\alpha\in \Pi$.
This also simplifies the $W$-module structure of the group ring of $X$, as we shall see.

Let $\aA$ be the group ring of $X$,
\begin{align*}
\aA = \Z[e^x]_{x\in X} / (e^x e^{x'} = e^{x+x'}), 
\end{align*}
which is a domain.
Finally, fix a parameter set given by a collection $q_{s_\alpha}=q_\alpha$ of indeterminates indexed by $\alpha\in \Pi$ such that $q_{\alpha}=q_\beta$ whenever the order $m_{\alpha,\beta}$ of $s_\alpha s_\beta$ in $W$ is odd.
As convention, we put $m_{\alpha,\alpha} = 2$.

With this data we associate the affine Hecke algebra, $\ahecke$, which appears naturally in the 
complex, admissible representation theory of the associated algebraic group
over $p$-adic fields. 

\begin{definition}
Let $\fhecke$ denote 
the finite Hecke algebra associated to Weyl group of the root datum. This is the $\Z[q_\alpha]_{\alpha\in \Pi}$-algebra generated by symbols $T_\alpha=T_{s_\alpha}, \alpha\in \Pi$, with the relations:
\begin{enumerate}[i.]
\item $\cdots T_\beta T_\alpha = \cdots T_\alpha T_\beta,\quad \text{for $\alpha\not=\beta$, with $m_{\alpha,\beta}$ terms on both sides,}$
\item $(T_\alpha+1)(T_\alpha -q_\alpha)=0, \quad \alpha\in \Pi$.
\end{enumerate}
Denote by $\ahecke$ the affine Hecke algebra of the root system.
As an additive group,
\[
\ahecke = \fhecke\otimes_\Z\aA.
\]
Let $\fhecke$ and $\aA$ be subrings,
with the indeterminates $q_\alpha$ central and give $\ahecke$ the following commutativity relation between $T_\alpha\in \fhecke$, $f\in \aA$:
\begin{equation}\label{commute-relation}
T_\alpha f - s_\alpha(f)T_\alpha= (q_\alpha-1)\dfrac{f - s_\alpha(f)}{1-e^{-\alpha}}.
\end{equation}
\end{definition}
We remark that while $(1-e^{-\alpha})^{-1}\not\in\aA$, 
the fraction appearing on right side of the above commutativity formula is in $\aA$.
For example,
\begin{align*}
\dfrac{e^x-e^{s_\alpha(x)}}{1-e^{-\alpha}} =& \begin{cases}
e^x+ e^{x-\alpha}+\cdots +e^{s_\alpha(x)+\alpha} & \langle x, \check{\alpha}\rangle > 0 \\
-(e^{x+\alpha}+ e^{x+2\alpha}+\cdots +e^{s_\alpha(x)}) & \langle x, \check{\alpha}\rangle < 0 \\
0 & \langle x, \check{\alpha}\rangle = 0.
\end{cases}
\end{align*}

\subsection{Degenerate and interpolating affine Hecke algebras}
The degenerate affine Hecke algebra $\dhecke$ is introduced in this section, along with an algebra $\idhecke$ which interpolates the affine and degenerate affine Hecke algebras.
Let $(X,Y,R,\check{R},\Pi)$ be a simply connected semisimple root datum. 
A set of parameters for the degenerate affine Hecke algebra, $\dhecke$, is a collection of indeterminates $c_\alpha$ such that $c_\alpha=c_\beta$ when $m_{\alpha,\beta}$ is odd.
Let $\pC = \Z[c_\alpha]_{\alpha\in \Pi}$ be the parameter ring. 
Let $\dA = S_\Z(X)$ the symmetric algebra of $X$ over $\Z$, a polynomial algebra over $\Z$ with
variables given by a basis of $X$. 

\begin{definition}
As an additive group let
\begin{align*}
\dhecke = \Z[W]\otimes_\Z \pC\otimes_\Z\dA.
\end{align*} 
Here $\Z[W]$ denotes the group ring of the Weyl group,
generated by the simple reflections $s_\alpha\in W, \alpha\in \Pi$. 
Let $\C[W], \pC$ and 
$\dA$ be subrings of $\dhecke$, the parameters $c_\alpha$ be central and give $\dhecke$ the following commutativity relation 
between $s_\alpha\in \Z[W]$ and $x\in X\subset \dA$:
\begin{align*}
s_\alpha \cdot x - s_\alpha(x)\cdot s_\alpha = c_\alpha\dfrac{x-s_\alpha(x)}{\alpha}.
\end{align*}
\end{definition}

We stop here to remark that as before the fraction on the right of the above formula does define an element of $\dA$.
Indeed,
\begin{align*}
\dfrac{x-s_\alpha(x)}{\alpha} = \langle x, \alpha\rangle .
\end{align*}

The interpolating Hecke algebra, $\idhecke$, is an algebra defined using a parameter $h$
such that the specialization at zero gives, $\idhecke\otimes_{\Z [h]}\Z[h]/(h)\cong \dhecke$ whereas the specialization away from zero gives, $\idhecke\otimes_{\Z[h]}\Z[h^{\pm 1}]\cong \ahecke\otimes_{\Z}\Z[h^{\pm 1}]$,
where the parameters $q_s, c_s$ for $\ahecke$ and $\dhecke$ are related by $q_s=1+hc_s$.

Consider the polynomial ring, $\hat{\idA}$, over $\Z[h]$ with generators
$\{P_x\mid x\in X\}$. The symmetric algebra, $\dA = S_{\Z}(X)$, of $X$ over $\Z$ is the quotient of $\hat{\idA}$ by the relations $P_x + P_{y} = P_{x+y}$, and $h=0$.
Let $\idA$ be the quotient of $\hat{\idA}$ by the relations
\begin{align*}
P_x + P_y +h P_x P_y = P_{x+y}, 
\end{align*}
\begin{align*}
P_0 = 0.
\end{align*}
It is noted in \cite{savage} that $\idA$ is a rational form of the formal group ring of the multiplicative formal group law over the abelian group $X$.
Upon specializing at $h=0$,  $\idA\otimes_{\Z[h]}\Z[h]/(h)\cong S_\Z (X)$. 
Let,
\begin{align*}
U_x = 1+h P_x.
\end{align*}
Notice that,
\begin{align*}
U_x U_y =& h^2 P_x P_y + h (P_x+P_y) +1, \\
=& h P_{x+y}+1, \\
=& U_{x+y}.
\end{align*}
It follows that $\idA$ is isomorphic to the $\Z[h]$-subalgebra of $\aA\otimes_{\Z}\Z[h^{\pm 1}]$ generated by $\{P_x = h^{-1}(e^x-1)\}_{x\in X}$.
Hence, $\idA[h^{-1}]$ is isomorphic to the group ring of $X$ over $\Z[h^{\pm 1}]$.
Note that $W$ acts $\Z[h]$-linearly on $\idA$ and this action specializes to the action of $W$ on $\aA$ and $\dA$.

Let $\pC$ be the parameter ring, $\pC = \Z[c_\alpha]_{\alpha\in \Pi}$ and
let $q_\alpha = 1+hc_\alpha\in \pC[h]$ be parameters for the finite Hecke algebra $\fhecke$ over $\pC[h]$.
We now define the \emph{interpolating hecke algebra}, $\idhecke$.

\begin{definition}
As an additive group let $\idhecke$ be the tensor product, 
\begin{align*}
\idhecke = \fhecke \otimes_{\Z[h]} \idA.
\end{align*}
Let $\idA$ and $\fhecke$ be subalgebras, let $h, c_\alpha$ be central and give $\idhecke$ the following commutativity relation:
\begin{align*}
T_\alpha P_x - P_{s_\alpha(x)} T_\alpha = \begin{cases}
0 
& \text{if $\langle x,\check{\alpha}\rangle = 0$}, \\
c_\alpha \left(\langle x,\check{\alpha}\rangle + h
(P_x+ P_{x-\alpha}+\dots P_{s_\alpha(x)+\alpha}) \right) 
& \text{if $\langle x,\check{\alpha}\rangle > 0$}, \\
c_\alpha \left(\langle x,\check{\alpha}\rangle - h
(P_{x+\alpha}+ P_{x+2\alpha}+\dots P_{s_\alpha(x)}) \right) 
& \text{if $\langle x,\check{\alpha}\rangle < 0$}.
\end{cases}
\end{align*}
\end{definition}

\begin{proposition}
We have canonical identifications $\idhecke\otimes_{\Z[h]}\Z[h]/(h)\cong\dhecke$, and $\idhecke\otimes_{\Z[h]} \Z[h^{\pm 1}]\cong \ahecke\otimes_{\Z}\Z[h^{\pm 1}]$.

\end{proposition}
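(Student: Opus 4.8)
The plan is to treat each claimed identification as an algebra map defined on the generating subalgebras $\fhecke$ and $\idA$ and on the parameters, to check that it respects the defining relations of $\idhecke$, and finally to see that it is bijective by comparing free additive decompositions. Recall that $\fhecke$ is free over its coefficient ring with basis $\{T_w\}_{w\in W}$, so as an additive group $\idhecke=\bigoplus_{w\in W}T_w\otimes\idA$; every base change below carries this basis to a corresponding basis of the target, which is what ultimately forces bijectivity rather than mere surjectivity.

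For the specialization at $h=0$, set $h\mapsto 0$, $c_\alpha\mapsto c_\alpha$, $T_\alpha\mapsto s_\alpha$, and $P_x\mapsto x\in X\subset\dA$. Since $\idA\otimes_{\Z[h]}\Z[h]/(h)\cong S_\Z(X)=\dA$ as already noted, and $\fhecke\otimes_{\Z[h]}\Z[h]/(h)$ is the finite Hecke algebra at $q_\alpha=1$, whose quadratic relation $(T_\alpha+1)(T_\alpha-q_\alpha)$ becomes $(s_\alpha+1)(s_\alpha-1)=s_\alpha^2-1$ while the braid relations are unchanged, the assignment is compatible with both subalgebra structures and yields $\fhecke/(h)\cong\pC[W]$. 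It remains only to check the commutation relation: modulo $h$ its right-hand side collapses to $c_\alpha\langle x,\check{\alpha}\rangle$ in all three cases, and since $(x-s_\alpha(x))/\alpha=\langle x,\check{\alpha}\rangle$ this is exactly the defining relation of $\dhecke$. Thus we obtain an algebra homomorphism $\idhecke\otimes_{\Z[h]}\Z[h]/(h)\to\dhecke$ which identifies the decomposition $\bigoplus_w \bar T_w\otimes\dA$ with $\bigoplus_w w\otimes(\pC\otimes\dA)$, hence is an isomorphism.

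For the specialization inverting $h$, keep $\fhecke$ and $T_\alpha$ unchanged, set $c_\alpha=h^{-1}(q_\alpha-1)$, and use the isomorphism $\idA[h^{-1}]\cong\aA\otimes_\Z\Z[h^{\pm 1}]$ already established via $U_x=1+hP_x\mapsto e^x$, i.e. $P_x\mapsto h^{-1}(e^x-1)$. First one notes the two coefficient rings agree: since $c_\alpha=h^{-1}(q_\alpha-1)$ and $q_\alpha=1+hc_\alpha$, the subrings $\Z[c_\alpha][h^{\pm 1}]$ and $\Z[q_\alpha][h^{\pm 1}]$ of the common overring coincide once $h$ is invertible. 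The heart of the argument is the commutation relation. Writing $f=e^x$, the cancellation $T_\alpha P_x-P_{s_\alpha(x)}T_\alpha=h^{-1}(T_\alpha e^x-e^{s_\alpha(x)}T_\alpha)$ together with Bernstein's relation \eqref{commute-relation} gives $c_\alpha\,(e^x-e^{s_\alpha(x)})/(1-e^{-\alpha})$; expanding the fraction by the explicit geometric-series formula recorded after \eqref{commute-relation} and substituting $e^{x-k\alpha}=1+hP_{x-k\alpha}$ turns this expression, case by case on the sign of $\langle x,\check{\alpha}\rangle$, into precisely the three-case right-hand side defining $\idhecke$ (the constant terms summing to $c_\alpha\langle x,\check{\alpha}\rangle$ and the $h$-linear terms to the displayed sums of $P_{x-k\alpha}$). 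Once this is done, the map is an algebra homomorphism carrying the basis $\{T_w\otimes\idA[h^{-1}]\}$ to $\{T_w\otimes(\aA\otimes\Z[h^{\pm 1}])\}$, and is therefore an isomorphism onto $\ahecke\otimes_\Z\Z[h^{\pm 1}]$.

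I expect the only delicate point to be the bookkeeping in this last computation: tracking the endpoints and the number of terms in the geometric-series expansion for both signs of $\langle x,\check{\alpha}\rangle$, and confirming that the telescoping of the constant terms reproduces $\langle x,\check{\alpha}\rangle$ exactly. Everything else is a formal comparison of free modules, so no genuine obstruction to bijectivity arises beyond exhibiting the matching bases above.
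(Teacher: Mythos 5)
Your proposal is correct and follows essentially the same route as the paper: both hinge on the substitution $U_x=1+hP_x$, $q_\alpha=1+hc_\alpha$ and the observation that the geometric-series expansion of $(e^x-e^{s_\alpha(x)})/(1-e^{-\alpha})$ has exactly $|\langle x,\check{\alpha}\rangle|$ terms, so that the commutation relations of $\idhecke$ and $\ahecke$ match case by case. You merely run the key computation in the opposite direction (from Bernstein's relation to the $P_x$-relation rather than from the $P_x$-relation to the $U_x$-relation) and make explicit the bookkeeping --- matching the free bases $\{T_w\}$ and the coefficient rings $\Z[c_\alpha][h^{\pm 1}]=\Z[q_\alpha][h^{\pm 1}]$ --- that the paper leaves implicit.
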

\begin{proof}
From the above relations we find $\idhecke\otimes_{\Z[h]}\Z[h]/(h)\cong \dhecke$ by sending $P_x$ to the associated element $x$ of the symmetric algebra. 
For the specialization with $\pC[h^{\pm 1}]$, we note that for $\langle x,\check{\alpha}\rangle >0$, the number of terms in the sum $P_x+P_{x-\alpha}+\cdots +P_{s_\alpha(x)+\alpha}$ is precisely $\langle x,\check{\alpha}\rangle$. 
Similarly, for $\langle x,\check{\alpha}\rangle <0$ the number of terms in $P_{x+\alpha}+P_{x+2\alpha}+\cdots +P_{s_\alpha(x)}$ is precisely $-\langle x,\check{\alpha}\rangle$. 
Using the fact that $U_y = 1+h P_y$, as well as $q_\alpha-1=hc_\alpha$ we see,
\begin{align*}
T_\alpha U_x - U_{s_\alpha(x)} T_\alpha = \begin{cases}
0 
& \text{if $\langle x,\check{\alpha}\rangle = 0$}, \\
(q_\alpha-1) \left(
U_x+ U_{x-\alpha}+\dots U_{s_\alpha(x)+\alpha} \right) 
& \text{if $\langle x,\check{\alpha}\rangle > 0$}, \\
-(q_\alpha-1) \left(
U_{x+\alpha}+ U_{x+2\alpha}+\dots U_{s_\alpha(x)} \right) 
& \text{if $\langle x,\check{\alpha}\rangle < 0$}.
\end{cases}
\end{align*}

These are nothing more than the commutativity relations for the affine Hecke
algebra $\ahecke$. 
\end{proof}

\subsection{Demazure operators and polynomial representations}
\label{lusztig-rep}
Now we define BGG operators and a variant of Demazure operators 
to discuss the representations of Hecke algebras on their commutative subalgebras.
Define $\bgg_\alpha:S_\Z(X)\to S_\Z(X)$ by the following formula:
\begin{align*}
\bgg_\alpha(f) = \frac{f-s_\alpha(f)}{\alpha}.
\end{align*}
As usual, the right side of the formula actually lies in $S_\Z(X)$.
Note that the commutativity relation for $\dhecke$ may be written
\begin{align*}
s_\alpha\cdot f - s_\alpha(f)\cdot s_\alpha = c_\alpha \Delta_\alpha(f),
\end{align*}
for any $f\in \dA$.
Define $\demazure_\alpha:\aA\to \aA$ by the following formula:
\begin{align*}
\demazure_\alpha(f) = \frac{f-s_\alpha(f)}{1-e^{-\alpha}}.
\end{align*}
Note that the commutativity relation for $\ahecke$ may be written
\begin{align*}
T_\alpha f - s_\alpha(f) T_\alpha = (q_\alpha-1)\demazure_\alpha(f),
\end{align*}
for any $f\in \aA$

Recall that the algebra $\idA$ is a subalgebra of the localization $\aA\otimes_\Z \Z[h^{\pm 1}]$
by the inclusion which maps $P_x$ to $h^{-1}(e^x-1)$. 
We compute:
\begin{align*}
h \demazure_\alpha(P_x) &= \demazure_\alpha(e^x-1) \\
&= \demazure_\alpha(e^x) \\
&= \begin{cases}
0 
& \text{if $\langle x,\check{\alpha}\rangle = 0$}, \\
\langle x,\check{\alpha}\rangle + h
(P_x+ P_{x-\alpha}+\dots P_{s_\alpha(x)+\alpha})  
& \text{if $\langle x,\check{\alpha}\rangle > 0$}, \\
\langle x,\check{\alpha}\rangle - h
(P_{x+\alpha}+ P_{x+2\alpha}+\dots P_{s_\alpha(x)}) 
& \text{if $\langle x,\check{\alpha}\rangle < 0$},
\end{cases}
\end{align*}
and see that $\demazure_\alpha:\idA\to h^{-1}\idA$. 
To put it informally, $\demazure_\alpha$ is singular at $h=0$. 
Nonetheless we have a well defined operator $h\demazure_\alpha:\idA\to \idA$. 
This is summarized by the following.
\begin{claim}
Let $f\in \idA$ and consider the operator $h\demazure_\alpha:\idA\to \idA$. 
The commutativity relation for the interpolating Hecke algebra may be written,
\begin{align*}
T_\alpha f - s_\alpha(f) T_\alpha = c_\alpha h\demazure_\alpha(f).
\end{align*}
In the specialization $\iA\otimes_{\Z[h]}\Z[h]/(h)\cong S(X)$, the operator $h\demazure_\alpha$ specializes to $\bgg_\alpha$.
\end{claim}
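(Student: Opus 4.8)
The plan is to reduce the identity from the generators $P_x$ of $\idA$, where it has effectively already been verified by the computation of $h\demazure_\alpha(P_x)$ preceding the claim, to all of $\idA$ by exploiting a common \emph{twisted Leibniz rule} satisfied by both sides. First I would record that $\delta_\alpha\colon \idA \to \idhecke$, $\delta_\alpha(f) = T_\alpha f - s_\alpha(f)T_\alpha$, is a twisted derivation: applying $T_\alpha f = s_\alpha(f)T_\alpha + \delta_\alpha(f)$ twice, one computes in $\idhecke$ that
\begin{align*}
\delta_\alpha(fg) = \delta_\alpha(f)\,g + s_\alpha(f)\,\delta_\alpha(g).
\end{align*}
The same rule holds for $h\demazure_\alpha$: since $\demazure_\alpha(f) = (f - s_\alpha(f))/(1 - e^{-\alpha})$ lives in the commutative overring $\aA \otimes_\Z \Z[h^{\pm 1}]$, inserting $\pm s_\alpha(f)g$ in the numerator gives
\begin{align*}
h\demazure_\alpha(fg) = h\demazure_\alpha(f)\,g + s_\alpha(f)\,h\demazure_\alpha(g),
\end{align*}
and, by the discussion just above the claim, every term lies in $\idA$ when $f,g \in \idA$. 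Both $\delta_\alpha$ and $c_\alpha\, h\demazure_\alpha$ are moreover $\Z[h]$-linear, since $h$ and $c_\alpha$ are central and $W$ acts $\Z[h]$-linearly.

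The key step is then the passage from generators to arbitrary elements. The two operators agree on each generator $P_x$, as this is precisely the defining commutativity relation of $\idhecke$ compared against the formula for $h\demazure_\alpha(P_x)$. I would set $S = \{f \in \idA : \delta_\alpha(f) = c_\alpha\, h\demazure_\alpha(f)\}$; this is a $\Z[h]$-submodule containing every $P_x$, and for $f,g \in S$ the two Leibniz rules give
\begin{align*}
\delta_\alpha(fg) = \delta_\alpha(f)\,g + s_\alpha(f)\,\delta_\alpha(g) = c_\alpha\,h\demazure_\alpha(f)\,g + s_\alpha(f)\,c_\alpha\,h\demazure_\alpha(g) = c_\alpha\, h\demazure_\alpha(fg),
\end{align*}
so $fg \in S$. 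Since $\idA$ is generated as a $\Z[h]$-algebra by $\{P_x\}_{x\in X}$ (being a quotient of the polynomial ring on these symbols), this forces $S = \idA$, which is the first assertion.

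For the specialization I would reduce $h\demazure_\alpha$ modulo $h$. Under $\idA \otimes_{\Z[h]}\Z[h]/(h) \cong \dA = S_\Z(X)$ the generator $P_x$ maps to $x$, and each of the three branches of the formula for $h\demazure_\alpha(P_x)$ specializes at $h=0$ to $\langle x,\check{\alpha}\rangle$; since $\bgg_\alpha(x) = \langle x,\check{\alpha}\rangle$, the reduction of $h\demazure_\alpha$ agrees with $\bgg_\alpha$ on generators. Both reduced operators again satisfy the twisted Leibniz rule over $\Z$, so the same subalgebra argument forces them to coincide on all of $S_\Z(X)$. (Alternatively, one may specialize the relation of the first part at $h=0$, where $T_\alpha \mapsto s_\alpha$ and the identity becomes the defining relation $s_\alpha f - s_\alpha(f)s_\alpha = c_\alpha \bgg_\alpha(f)$ of $\dhecke$, matching $c_\alpha \bgg_\alpha$ term by term.)

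I expect the only genuine subtlety to be the generators-to-all-elements step: one must ensure both maps are well defined on $\idA$, so that the non-unique expression of elements in terms of the $P_x$ (via relations such as $P_x + P_y + hP_xP_y = P_{x+y}$) causes no trouble. This is guaranteed because $h\demazure_\alpha$ was already shown to land in $\idA$ and $\delta_\alpha$ is built from the well-defined multiplication of $\idhecke$; the twisted Leibniz rule then does all the remaining work, and the rest of the argument is routine.
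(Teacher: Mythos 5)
Your proposal is correct and takes essentially the same route as the paper: the paper's entire justification is the displayed computation of $h\demazure_\alpha(P_x)$, which it matches against the defining commutativity relation of $\idhecke$ on the generators $P_x$, leaving the extension to all of $\idA$ and the $h=0$ specialization as routine. Your twisted-Leibniz-rule argument is precisely the standard bookkeeping that the paper leaves implicit, so it fills in rather than replaces the paper's reasoning.
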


We also remark that the classical Demazure operators, $\widetilde{\demazure}_\alpha:\aA\to\aA$, given by
\begin{align*}
\widetilde{\demazure}_\alpha:f\mapsto \dfrac{f-e^{-\alpha}s_\alpha(f)}{1-e^{-\alpha}},
\end{align*}
may be expressed in terms of $\demazure_\alpha$.
Let $\rho\in X$ be defined by $\langle \rho,\check{\alpha}\rangle =1, \alpha\in \Pi$, and recall that $\widehat{e^\rho}:\aA\to\aA$ is the invertible operator given by multiplication by $e^\rho$. 
We claim that,
\begin{align*}
\widetilde{\demazure}_\alpha = \widehat{e^{-\rho}}\circ \demazure_\alpha\circ \widehat{e^\rho}.
\end{align*}
Indeed, $s_\alpha(e^\rho) = e^{-\alpha}e^\rho$, hence,
\begin{align*}
\widehat{e^{-\rho}}\circ \demazure_\alpha\circ \widehat{e^\rho}(f) 
=& e^{-\rho} \cdot \dfrac{e^\rho f - s_\alpha(e^\rho f)}{1-e^{-\alpha}}, \\
=& \dfrac{f-e^{-\alpha}s_\alpha(f)}{1-e^{-\alpha}}.
\end{align*}
As the classical Demazure operators satisfy the braid relations, so do the $\demazure_\alpha$ 
\begin{align*}
\cdots \demazure_\beta \demazure_\alpha = 
\cdots \demazure_\alpha \demazure_\beta, \hspace{10pt} \text{$m_{\alpha,\beta}$ terms}.
\end{align*}
It is also important to note that $\demazure_\alpha$ does not specialize to $\bgg_\alpha$ when
$h\to 0$, but rather $h\demazure_\alpha$ specializes to $\bgg_\alpha$. 
This proves that the $\bgg_\alpha$ also satisfy the braid relations.
Further, the quadratic relation $\demazure_\alpha^2 = \demazure_\alpha$ gives $(h\demazure_\alpha)^2 = h(h\demazure_\alpha)$, which specializes to $0$ when $h\to 0$, showing $\bgg_\alpha^2=0$. 

We remark that for $w\in W$, the operator $\demazure_w$ is uniquely defined by taking a reduced decomposition $w=s_1\cdots s_r$ and setting $\demazure_w = \demazure_{s_1}\cdots \demazure_{s_r}$.
The assumption that $(X,Y,R,\check{R},\Pi)$ is simply connected implies by the Pittie-Steinberg theorem that $W$ forms a basis of $\End_{(\idA)^W}(\idA)$ as a left $\idA$-module.
Extending scalers from $\idA$ to the fraction field $ff(\idA)$, we see by a triangular base change that $\{\demazure_w\mid w\in W\}$ forms a basis of $\End_{ff(\idA)^W}(ff(\idA))$.

\begin{proposition}
Consider the representation of the finite Hecke algebra $\fhecke$ on $\pC[h]$ given
by sending $T_\alpha\mapsto q_\alpha$. 
There is an induced representation of $\idhecke$ on $\idhecke\otimes_{\fhecke}\pC[h]\cong \idA\otimes_{\Z[h]}\pC[h]$.
Write $\widehat{T_\alpha},\widehat{q_\alpha},\dots : \idA\otimes_{\Z[h]}\pC[h]\to\idA\otimes_{\Z[h]}\pC[h]$ for the action of $T_\alpha,q_\alpha,\dots$ as operators on $\idA\otimes_{\Z[h]}\pC[h]$. 
Then,
\begin{align}\label{t-q}
\widehat{T_\alpha-q_\alpha}:P_x\mapsto 
(c_\alpha+q_\alpha P_{-\alpha})\cdot h\demazure_\alpha(P_x).
\end{align}
\end{proposition}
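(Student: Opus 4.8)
The plan is to realize the induced module concretely and then read off the action of $T_\alpha$ directly from the defining commutativity relation of $\idhecke$. First I would record that $\idhecke$ is free as a \emph{right} $\fhecke$-module on any $\Z[h]$-basis of $\idA$: starting from the additive decomposition $\idhecke=\fhecke\otimes_{\Z[h]}\idA$, the commutativity relation in the form $T_\alpha P_x = P_{s_\alpha(x)}T_\alpha + c_\alpha h\demazure_\alpha(P_x)$ lets one move every $P$ to the left of every $T$, rewriting an arbitrary element as $\sum_w g_w T_w$ with $g_w\in\idA$, and the triangularity of these relations (cf. the PBW basis of Theorem \ref{PBW}) shows the expression is unique. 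This yields the stated identification $\idhecke\otimes_{\fhecke}\pC[h]\cong\idA\otimes_{\Z[h]}\pC[h]$, under which $P_x\otimes 1$ corresponds to $P_x\otimes 1$, each $f\in\idA$ acts by left multiplication, and $\widehat{q_\alpha}$ is multiplication by the central element $q_\alpha=1+hc_\alpha$.

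Next I would compute $\widehat{T_\alpha}(P_x)$ by applying $T_\alpha$ to $P_x\otimes 1$, using the Claim's form of the commutativity relation to move $T_\alpha$ to the right, where it meets $1\in\pC[h]$ and acts by the scalar $q_\alpha$. This gives
\[
\widehat{T_\alpha}(P_x) = q_\alpha P_{s_\alpha(x)} + c_\alpha h\demazure_\alpha(P_x),
\]
so that
\[
\widehat{T_\alpha-q_\alpha}(P_x) = q_\alpha\bigl(P_{s_\alpha(x)}-P_x\bigr) + c_\alpha h\demazure_\alpha(P_x).
\]

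The heart of the argument is the identity $P_{s_\alpha(x)}-P_x = P_{-\alpha}\cdot h\demazure_\alpha(P_x)$, which I would verify inside the localization $\aA\otimes_\Z\Z[h^{\pm 1}]$ via the embedding $P_y\mapsto h^{-1}(e^y-1)$. There one has $U_y=1+hP_y=e^y$, hence $1-e^{-\alpha}=-hP_{-\alpha}$, while $h\demazure_\alpha(P_x)=\demazure_\alpha(e^x)=(e^x-e^{s_\alpha(x)})/(1-e^{-\alpha})$; multiplying by $P_{-\alpha}=h^{-1}(e^{-\alpha}-1)$ and cancelling the factor $1-e^{-\alpha}$ produces exactly $h^{-1}(e^{s_\alpha(x)}-e^x)=P_{s_\alpha(x)}-P_x$. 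Substituting this into the previous display collapses the two terms into $(c_\alpha+q_\alpha P_{-\alpha})\,h\demazure_\alpha(P_x)$, which is the asserted formula \eqref{t-q}.

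The main obstacle I anticipate is bookkeeping rather than conceptual. One must keep the singular-at-$h=0$ operator $\demazure_\alpha$ distinct from the regular operator $h\demazure_\alpha$, and confirm that although the key identity is derived in the localization, every quantity appearing genuinely lies in $\idA$, so that the result is an honest identity of operators on $\idA\otimes_{\Z[h]}\pC[h]$. A secondary point worth verifying is the right $\fhecke$-module freeness invoked in the first step, namely that commuting each $T_\alpha$ past each $P_x$ preserves the claimed $\idA$-basis; this is where I would lean on the PBW result.
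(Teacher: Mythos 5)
Your proof is correct, and its skeleton matches the paper's: both move $T_\alpha$ past $P_x$ via the commutativity relation $T_\alpha P_x = P_{s_\alpha(x)}T_\alpha + c_\alpha h\demazure_\alpha(P_x)$, let $T_\alpha$ act on $1$ by the scalar $q_\alpha$, and thereby reduce to identifying $q_\alpha\bigl(P_{s_\alpha(x)}-P_x\bigr) + c_\alpha h\demazure_\alpha(P_x)$ with $(c_\alpha+q_\alpha P_{-\alpha})\,h\demazure_\alpha(P_x)$. Where you genuinely differ is in how that identification is carried out. The paper argues case by case on the sign of $\langle x,\check{\alpha}\rangle$ (writing out only the positive case), expands $\demazure_\alpha(P_x)$ as the explicit sum $h^{-1}\langle x,\check{\alpha}\rangle + P_x+\cdots+P_{s_\alpha(x)+\alpha}$, and recombines terms using $U_{-\alpha}U_y=U_{y-\alpha}$. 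You instead isolate the single identity $P_{s_\alpha(x)}-P_x = P_{-\alpha}\cdot h\demazure_\alpha(P_x)$ and verify it uniformly in the localization $\aA\otimes_\Z\Z[h^{\pm 1}]$, where it is a one-line cancellation; this is legitimate because $\idA$ embeds in that localization and both sides visibly lie in $\idA$ (the paper's Claim guarantees $h\demazure_\alpha$ maps $\idA$ to itself). Your route buys uniformity — no case analysis and no sum bookkeeping — and the identity you rely on is precisely the one the paper itself records later, just before defining $\lhecke$, in the form $(-P_{-\alpha})\cdot h\demazure_\alpha(f)=f-s_\alpha(f)$; what the paper's expansion buys is a computation that never leaves $\idA$ and never introduces denominators, making integrality manifest at every step. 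One small slip: the uniqueness you invoke for the identification $\idhecke\otimes_{\fhecke}\pC[h]\cong\idA\otimes_{\Z[h]}\pC[h]$ is a Bernstein-basis statement for $\idhecke$, not Theorem \ref{PBW}, which concerns $\qhecke$; but since that isomorphism is already asserted in the proposition's statement, and the moving-polynomials-left argument you sketch is the correct justification for it, this does not affect the validity of your proof.
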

\begin{proof}
The claim is obvious for $\langle x, \check{\alpha}\rangle =0$. We show the case 
$\langle x,\check{\alpha}\rangle > 0 $, the other case being nearly identical.
By the commutativity relation for $\idhecke$ we find:
\begin{align*}
\widehat{T_\alpha -q_\alpha }(P_x) 
=& P_{s_\alpha(x)}T_\alpha\otimes 1 +(q_\alpha-1)\demazure_\alpha(P_x) - q_\alpha P_x,\\
=& q_\alpha P_{s_\alpha(x)} - q_\alpha P_x + c_{s_\alpha}h \demazure_\alpha(P_x)\\
=& q_\alpha (P_{s_\alpha(x)} - P_{x}) + (q_\alpha -1)
(h^{-1}\langle x,\check{\alpha}\rangle +P_x+\cdots +P_{s_\alpha(x)+\alpha}) \\
=& q_\alpha 
\left( h^{-1}\langle x,\check{\alpha}\rangle +P_{x-\alpha}+\cdots +P_{s_\alpha(x)}\right) - 
\left( h^{-1}\langle x,\check{\alpha}\rangle +P_x+\cdots +P_{s_\alpha(x)+\alpha}  \right) \\
=& (q_\alpha (1+hP_{-\alpha})-1)\demazure_\alpha(P_x) \\
=& (q_\alpha-1+q_\alpha hP_{-\alpha})\demazure_\alpha(P_x) \\
=& (c_{s_\alpha}+q_\alpha P_{-\alpha})h\demazure_\alpha(P_x) \\
=& (c_{s_\alpha}+P_{-\alpha}+hc_{s_\alpha}P_{-\alpha})h\demazure_\alpha(P_x).
\end{align*}
Here we have used that $\demazure_\alpha(P_x) = h^{-1}\demazure_\alpha(U_x)$, and that
$1+hP_{-\alpha }=U_{-\alpha}$ which satisfies the relation,
$U_{-\alpha}U_y = U_{y-\alpha}$.
\end{proof}

%\begin{document} //to fool vim 
\subsection{Weight spaces of $\ihecke$-modules}
For this section fix $k$ a field and fix a parameter $h_0\in k$.
\begin{definition}
Let $\iA = \idA\otimes_{\Z[h]}k[h] / (h-h_0)$, $\iT = \Hom_{k-alg}(\iA,k)$.
We call $\iT$ the space of weights for the algebra $\iA$, and given $\lambda\in \iT$ and $f\in \iA$ we let $f(\lambda)\in k$ denote the evaluation of $\lambda$ at $f$.
\end{definition}
As $\iA\otimes_{\Z[h]}k[h]/(h-h_0)$ is isomorphic to the symmetric algebra of $X$ over $k$ for $h_0=0$ and the group ring of $X$ over $k$ for $h_0\not=0$, we find that $\iT\cong Y\otimes_\Z k$ for $h_0=0$ and $\iT\cong Y\otimes_\Z k^*$ for $h_0\not=0$.
Motivated by the following paragraph we will call $\iT$ the 
space of weights for the algebra $\iA$ with parameter $h_0$.

Let $V$ be an $\iA$-module which is finite dimensional as a $k$-vector space. 
If $k$ is algebraically closed, there is 
canonical generalized eigenspace (weight space) decomposition:
\begin{align*}
V = \bigoplus_{\lambda\in \Omega} V_\lambda,
\end{align*}
where for $\lambda\in \iT$, 
$V_\lambda = \{v\in V\mid (f-\lambda(f))^n v=0$ for all $f\in \iA, n\gg 0\}$ 
and $\Omega = \Omega(V) \subset \iT$ is the finite subset of $\lambda$ such that
$V_\lambda\not=0$.

In discussing the weight spaces, $V_\lambda$, it is convenient to introduce 
a non-unital localization $\lA$ of $\iA$. We set
\begin{align*}
\lA = \bigoplus_{\lambda\in \iT} \iA_\lambda,
\end{align*}
where $\iA_\lambda = \iA[f^{-1}\mid \lambda(f)\not=0]$ with
the unit element denoted by $1_\lambda$. 
We have an equivalence from the category of finite dimensional
$\iA$-modules with eigenvectors in $k$ to the category of unital (with $1_\lambda$ as the projection onto $V_\lambda$) finite dimensional 
$\lA$-modules, sending $V\mapsto \bigoplus_{\lambda\in \Omega}V_\lambda$.

Now fix some set of parameters for $\fhecke$ in $k$, in other words, fix an algebra morphism $\pC[h]\to k$ for which $h\to h_0$. 
Define $\ihecke = \idhecke\otimes_\pC k$.
As we will see, if $V$ is a finite dimensional representation of $\ihecke$
and $\lambda\in\Omega(V)$ is a weight of the subalgebra $\iA\subset \ihecke$ 
which is not invariant under 
$s_\alpha$, then $T_\alpha $ does not
preserve the weight space $V_\lambda$, nor does it permute the weight spaces.
In fact $T_\alpha  (V_\lambda)\subset V_\lambda\oplus V_{s_\alpha(\lambda)}$.
There is, however, a relation
$1_\lambda T_\alpha  1_\lambda = f_\alpha1_\lambda$, in
$\End_\C(V)$, where $f_\alpha\in \iA_\lambda$ will be made explicit. 
In this work we describe generators and relations of $\ihecke$ 
which permute the weight spaces of finite representations.
These generators are based on the elements $1_{s_\alpha(\lambda)}T_\alpha 1_\lambda$.

Before we define the localized Hecke algebra $\lhecke$, note that $(-P_{-\alpha})\cdot h\demazure_\alpha(f) = f-s_\alpha(f)$.
This relation may be used to extend $h\demazure_\alpha$ to an operator on $\lA$ as follows.
Let $f\in \iA_\lambda$ be given. If $s_\alpha(\lambda)\not=\lambda$, then $P_{-\alpha}$ is invertible in both $\iA_\lambda$ and $\iA_{s_\alpha(\lambda)}$, and we set,
\begin{align*}
h\demazure_\alpha(f) = (-P_{-\alpha})^{-1}f - (-P_{-\alpha})^{-1}s_\alpha(f)
\in \iA_\lambda\oplus \iA_{s_\alpha(\lambda)}.
\end{align*}
If $s_\alpha(\lambda)=\lambda$, we appeal to the description of $\iA$ as one of $\aA$ or $\dA$, where the action of $h\demazure_\alpha(f)$ may be written as a fraction lying in $\iA_\lambda$.
This gives a well defined operator $h\demazure_\alpha: \lA\to \lA$.

\begin{definition}
As an additive group let
$\lhecke = \ihecke\otimes_{\iA}\lA
\cong \bigoplus_{\lambda\in \iT} \fhecke \otimes_\C \iA_\lambda$.
Let the algebra $\lA$ be a subalgebra of $\lhecke$. 
For $\Lambda\in \iT/W$, put $1_\Lambda = \sum_{\lambda\in\Lambda}1_\lambda$ and 
$T_\alpha ^\Lambda = T_\alpha \otimes 1_\Lambda\in \lhecke$. 
For each such $\Lambda$ let the inclusion of the finite Hecke algebra,
\begin{align*}
\fhecke&\hookrightarrow \bigoplus_{\lambda\in \Lambda} 
\ihecke\otimes_{\iA}\iA_\lambda \subset \lhecke, \\
T_\alpha &\mapsto T_\alpha ^\Lambda.
\end{align*}
be an algebra morphism.
The following decomposition,
\begin{align*}
\lhecke = \bigoplus_{\Lambda\in\iT/W} \lhecke 1_\Lambda.
\end{align*}
gives $\lhecke$ the structure of a locally unital algebra with the following commutativity relation:
\begin{align*}
T_\alpha^\Lambda f - s_\alpha(f)T_\alpha^\Lambda 
= c_\alpha h\demazure_\alpha(f),
\end{align*}
for any $f\in \lA 1_\Lambda$. 

\end{definition}

First note that the unital algebra $\lhecke 1_\Lambda$ in the direct sum above is a subalgebra of the completion of $\ihecke$ at the kernel of the associated central character $\Lambda: (\iA)^W\to k$ \cite{lusztig}. 
It is in fact the subalgebra generated by $\ihecke$ and the localizations $\iA_\lambda 1_\lambda$ in the completion of $\iA$ with respect to this ideal.

We show that a finite dimensional representation of $\ihecke$ for which the eigenvectors for $\iA$ are in $k$ indeed gives rise to a 
representation of $\lhecke$, where $1_\lambda$ acts as the projection onto
the $\lambda$ weight space.

\begin{lemma}\label{commutativity-relation}
Let $V$ be a finite dimensional representation of $\ihecke$ for which the eigenvalues of $\iA$ are in $k$. Let 
$1_\lambda\in \End_\C(V)$ be the projection onto $V_\lambda$. If
$\alpha\in \Pi$ with $s_\alpha(\lambda)=\lambda$ then 
$T_\alpha (\lambda)\subset V_\lambda$, hence,
\begin{align*}
T_\alpha 1_\lambda =& 1_\lambda T_\alpha \\
=& 1_\lambda T_\alpha + c_\alpha h\demazure_\alpha(1_\lambda).
\end{align*}

Moreover, if $s_\alpha(\lambda)\not=\lambda$, then $(P_{-\alpha})^{-1}\in \iA_\lambda,\iA_{s_\alpha(\lambda)}$. Thus the eigenvalues of $P_{-\alpha}$ are non-zero and hence $P_{-\alpha}$ is an invertible operator on $V_\lambda, V_{s_\alpha(\lambda)}$.
Also, $T_\alpha(V_\lambda)\subset V_\lambda\oplus V_{s_\alpha(\lambda)}$
and the following commutativity relation holds,
\begin{align*}
T_\alpha 1_\lambda - 1_{s_\alpha(\lambda)}T_\alpha 
=& 
c_\alpha(-P_{-\alpha})^{-1}(1_\lambda - 1_{s_\alpha}), \\
=&
c_\alpha h\demazure_\alpha(1_\lambda).
\end{align*}

\end{lemma}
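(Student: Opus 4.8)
The plan is to reduce both assertions to the commutativity relation $T_\alpha f - s_\alpha(f)T_\alpha = c_\alpha h\demazure_\alpha(f)$ for $f\in\iA$, using only that $\iA$ is a domain with $P_{-\alpha}\neq 0$. First I would record that $T_\alpha$ commutes with the invariant subalgebra $\iA^{s_\alpha}$: if $s_\alpha(f)=f$ then $(-P_{-\alpha})\,h\demazure_\alpha(f)=f-s_\alpha(f)=0$ forces $h\demazure_\alpha(f)=0$, so the relation collapses to $T_\alpha f=fT_\alpha$. Hence $T_\alpha$ preserves each generalized eigenspace of $V$ for $\iA^{s_\alpha}$. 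Since $k$ is algebraically closed and two characters of $\iA$ restrict to the same character of $\iA^{s_\alpha}$ exactly when they lie in one $\langle s_\alpha\rangle$-orbit, the $\iA^{s_\alpha}$-generalized eigenspace through $V_\lambda$ is precisely $V_\lambda\oplus V_{s_\alpha(\lambda)}$. This yields $T_\alpha(V_\lambda)\subset V_\lambda\oplus V_{s_\alpha(\lambda)}$ and, more usefully, $1_\mu T_\alpha 1_\nu=0$ whenever $\mu\notin\{\nu,s_\alpha(\nu)\}$.

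For $s_\alpha(\lambda)=\lambda$ the inclusion becomes $T_\alpha(V_\lambda)\subset V_\lambda$, and the vanishing $1_\lambda T_\alpha 1_\mu=0$ for $\mu\neq\lambda$ gives $1_\lambda T_\alpha=1_\lambda T_\alpha 1_\lambda=T_\alpha 1_\lambda$; the remaining equality then reduces to $h\demazure_\alpha(1_\lambda)=0$, which holds because $1_\lambda$ is $s_\alpha$-fixed. For $s_\alpha(\lambda)\neq\lambda$ I would first check $\lambda(P_{-\alpha})\neq 0$: under the identification of $\iA$ with a group ring ($h_0\neq0$) or symmetric algebra ($h_0=0$), the vanishing $\lambda(P_{-\alpha})=0$ is equivalent to $\lambda(\alpha)=1$, respectively $\langle\lambda,\alpha\rangle=0$, which is exactly the condition $s_\alpha(\lambda)=\lambda$; the identical computation applies at $s_\alpha(\lambda)$. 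Thus $P_{-\alpha}$ is invertible in $\iA_\lambda$ and $\iA_{s_\alpha(\lambda)}$, and acts invertibly on $V_\lambda$ and $V_{s_\alpha(\lambda)}$.

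The heart of the argument is the off-diagonal relation. Expanding $T_\alpha 1_\lambda$ and $1_{s_\alpha(\lambda)}T_\alpha$ and discarding the terms killed by $1_\mu T_\alpha 1_\nu=0$, the difference collapses to the diagonal part $1_\lambda T_\alpha 1_\lambda-1_{s_\alpha(\lambda)}T_\alpha 1_{s_\alpha(\lambda)}$, so it suffices to compute $A:=1_\lambda T_\alpha 1_\lambda$. Applying the commutativity relation to a vector in $V_\lambda$ and projecting back gives the operator identity $Af=s_\alpha(f)A+c_\alpha h\demazure_\alpha(f)$ on $V_\lambda$ for all $f\in\iA$. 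Using $(-P_{-\alpha})\,h\demazure_\alpha(f)=f-s_\alpha(f)$ and the invertibility of $P_{-\alpha}$ on $V_\lambda$, the candidate $A_0:=c_\alpha(-P_{-\alpha})^{-1}$ satisfies the same identity, so $D:=A-A_0$ obeys $Df=s_\alpha(f)D$ for every $f$.

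The hard part will be forcing $D=0$ in the presence of generalized (non-semisimple) eigenspaces. Choosing $f$ with $\lambda(f)\neq(s_\alpha\lambda)(f)$, which exists since $\lambda\neq s_\alpha(\lambda)$, and writing $f=\lambda(f)+N_1$, $s_\alpha(f)=(s_\alpha\lambda)(f)+N_2$ with $N_1,N_2$ commuting nilpotents on $V_\lambda$, the relation becomes $(\lambda(f)-(s_\alpha\lambda)(f))D=N_2D-DN_1$; since the operator $X\mapsto N_2X-XN_1$ is nilpotent, iterating forces $D=0$, whence $A=c_\alpha(-P_{-\alpha})^{-1}$ on $V_\lambda$. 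The symmetric computation at $s_\alpha(\lambda)$ gives the analogous diagonal term, and together they yield $T_\alpha 1_\lambda-1_{s_\alpha(\lambda)}T_\alpha=c_\alpha(-P_{-\alpha})^{-1}(1_\lambda-1_{s_\alpha(\lambda)})$. The final equality of the lemma is then definitional: evaluating the extension of $h\demazure_\alpha$ to $\lA$ at $f=1_\lambda$, with $s_\alpha(1_\lambda)=1_{s_\alpha(\lambda)}$, gives $h\demazure_\alpha(1_\lambda)=(-P_{-\alpha})^{-1}(1_\lambda-1_{s_\alpha(\lambda)})$.
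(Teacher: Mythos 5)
Your proof is correct, but it follows a genuinely different route from the paper's. The paper runs everything through one explicit computation: the telescoping identity $a^N-b^N=(a-b)\sum_i a^ib^{N-i-1}$, applied to $(P_x-z)^N$ and combined with the fact that $\iA$ is a domain, yields a closed formula for $h\demazure_\alpha\bigl((P_x-z)^N\bigr)$; from this the paper deduces simultaneously that $T_\alpha(V_\lambda)\subset V_\lambda$ in the fixed case (via the $\lfloor N/2\rfloor$ trick) and, in the non-fixed case, that the operator $T_\alpha 1_\lambda-c_\alpha(-P_{-\alpha})^{-1}1_\lambda$ is annihilated by high powers of $P_{s_\alpha(x)}-z$, hence maps $V_\lambda$ into $V_{s_\alpha(\lambda)}$ — which gives both the block structure and the diagonal block in one stroke. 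You instead modularize: first, $T_\alpha$ centralizes the invariant subalgebra $\iA^{s_\alpha}$ (same domain argument, but applied only to $s_\alpha$-invariant $f$), so $T_\alpha$ preserves generalized $\iA^{s_\alpha}$-eigenspaces, and these are exactly the orbit sums $V_\lambda\oplus V_{s_\alpha(\lambda)}$ because characters of $\iA$ agreeing on $\iA^{s_\alpha}$ lie in one $\langle s_\alpha\rangle$-orbit; second, you pin down the diagonal block $1_\lambda T_\alpha 1_\lambda$ by showing its difference from the candidate $c_\alpha(-P_{-\alpha})^{-1}$ intertwines the actions of $f$ and $s_\alpha(f)$, whose generalized eigenvalues on $V_\lambda$ differ, and a nilpotent-intertwiner argument forces it to vanish. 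Both arguments are sound; the paper's buys self-containedness (nothing beyond the telescoping identity and the domain property), while yours buys conceptual clarity and reusability — the block-structure step applies verbatim to anything centralizing $\iA^{s_\alpha}$, and the uniqueness step is pure linear algebra — at the cost of invoking the standard fact that fibers of $\Spec\iA\to\Spec\iA^{\langle s_\alpha\rangle}$ are orbits (integrality over invariants). One small point to make explicit in your write-up: your equivalence $\lambda(P_{-\alpha})=0\iff s_\alpha(\lambda)=\lambda$ uses simple connectedness (so that $\check\alpha$ is a basis vector of $Y$, making $\check\alpha\otimes k^*$, resp.\ $\check\alpha\otimes k$, injective); the paper assumes this throughout, so it is not a gap, but it is where that hypothesis enters.
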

\begin{proof}
Let $x\in X$ and $z\in k$.
With the simple identity, $a^N-b^N = (a-b)\sum_{0\le i\le N-1}a^ib^{N-i-1}$ we find
\begin{align*}
(-P_{-\alpha})h\demazure_\alpha((P_x-z)^N) 
=& 
(P_x-z)^N - (P_{s_\alpha(x)}-z)^N, \\
=&
(P_x-P_{s_\alpha(x)})
\sum_{0\le i\le N-1} (P_x-z)^i(P_{s_\alpha(x)}-z)^{N-i-1},\\
=&
\left((-P_{-\alpha})h\demazure_\alpha(P_x)\right) \cdot 
\sum_{0\le i\le N-1} (P_x-z)^i(P_{s_\alpha(x)}-z)^{N-i-1}.
\end{align*}
As $\iA$ is a domain,
\begin{align*}
(P_x-z)^N T_\alpha =& T_\alpha (P_{s_\alpha(x)}-z)^N + 
h\demazure_\alpha(P_x)
\sum_{0\le i\le N-1} (P_x-z)^i(P_{s_\alpha(x)}-z)^{N-i-1}.
\end{align*}

Let $\lambda\in\iT$ with $s_\alpha(\lambda)=\lambda$ and suppose $N, z$ are such that
$(P_{x}-z)^{\lfloor N/2\rfloor} (V_\lambda)=(P_{s_\alpha(x)}-z)^{\lfloor N/2\rfloor}(V_\lambda)=0$.
In this case, the above expression shows that
$(P_{x}-z)^N T_\alpha1_\lambda v=0$ for $v\in V$. Thus, $T_\alpha(V_\lambda)\subset V_\lambda$ and $T_\alpha1_\lambda = 1_\lambda T_\alpha$.

Now, suppose
$s_\alpha(\lambda)\not=\lambda$ so that $\lambda(P_{-\alpha})\not=0$.
Thus, $(P_{-\alpha})^{-1}\in \iA_\lambda, \iA_{s_\alpha(\lambda)}$ and $(P_{-\alpha})^{-1}$ may be considered as an operator on $V_\lambda, V_{s_\alpha(\lambda)}$ (as the operator $P_{-\alpha}$ has a lone eigenvalue which is non-zero). We also suppose that $N,z$ are picked so that
$(P_{x}-z)^N(V_\lambda)=0$. Then,
\begin{align*}
(P_{s_\alpha(x)}-z)^N (T_\alpha 1_\lambda-c_\alpha(-P_{-\alpha})^{-1}1_\lambda) v =& 
T_\alpha (P_{s_\alpha(x)}-z)^N 1_\lambda v \\ 
&+c_\alpha(-P_{-\alpha})^{-1} (P_{s_\alpha(x)}-z)^N1_\lambda v, \\
=& 0.
\end{align*}
In particular, it follows that $T_\alpha(V_\lambda)\subset V_\lambda\oplus V_{s_\alpha(\lambda)}$,
and moreover,
\begin{align*}
T_\alpha1_\lambda -  1_{s_\alpha(\lambda)}T_\alpha  
=& 
c_\alpha(-P_{-\alpha})^{-1}(1_\lambda - 1_{s_\alpha}), \\
=&
c_\alpha h\demazure_\alpha(1_\lambda).
\end{align*}
in $\End_\C(V)$.

\end{proof}

To show that a finite dimensional representation of $\ihecke$ lifts to a finite dimensional
representation of $\lhecke$ we note that the ring of $W$-invariants, $(\iA)^W$, is the center of $\ihecke$
and each finite representation splits into a direct sum of generalized eigenspaces
$V_\Lambda$ of the center of $\ihecke$ according to the central characters 
$\Lambda\in \iT/W$:
\begin{align*}
V = \sum_{\Lambda\in \iT/W} V_\Lambda.
\end{align*}
We can decompose the operators $T_\alpha  = \sum_{\Lambda}T_\alpha ^\Lambda$,
$f = \sum_\lambda f_\lambda$. The above lemma shows that these operators give
an action of $\lhecke$.

%\input{generators-relations}
%\begin{document} to fool vim editor
\section{Localized Quiver Hecke Algebras}\label{generators-relations}

\subsection{Weyl group orbits on tori}
We will need the following lemmas to study the localized quiver Hecke algebras.
Let $W$ be the Weyl group of the reduced root datum, $(X, Y, R,\check{R}, \Pi)$.
Recall that $\Pi\subset R$ defines a length function $\ell$ on $W$.

\begin{definition}
Let $W'\subset W$ be a subgroup.
We say that $W'$ is a standard parabolic subgroup if there is a subset $\Pi^\lambda\subset \Pi$
so that $W'$ is the subgroup generated by $\{s_{\alpha}\in W\mid \alpha\in \Pi^\lambda\}$. 
We call a subgroup parabolic if it is $W$-conjugate to a standard parabolic subgroup.

Let $\lambda\in \iT $.
We say that $\lambda$ is standard parabolic, if there is a subset $\Pi^\lambda\subset \Pi$
so that the stabilizer of $\lambda$
in the Weyl group $W$ is the standard parabolic subgroup generated by 
$\{s_{\alpha}\in W\mid \alpha\in \Pi^\lambda\}$. 
We call a weight parabolic (resp. parabolic with respect to $W^P$) 
if it is in the $W$-orbit (resp. $W^P$-orbit) of a standard parabolic weight (resp. standard parabolic with respect to $W^P$).
\end{definition}

\begin{lemma}\label{permute}
Let $w = s_{\alpha_n}\cdots s_{\alpha_1} = s_{\beta_n}\cdots s_{\beta_1}$ be two reduced
expressions. Then there is a permutation $p$ of $\{ 1,\cdots ,n\}$ so that
whenever $p(i)=j$,
\begin{align*}
s_{\alpha_1}\cdots s_{\alpha_{i-1}}(\alpha_i) &= 
s_{\beta_1}\cdots s_{\beta_{j-1}}(\beta_{j}), \\
s_{\alpha_n}\cdots s_{\alpha_{i+1}}(\alpha_i) &=
s_{\beta_n}\cdots s_{\beta_{j+1}} (\beta_j).
\end{align*}
\end{lemma}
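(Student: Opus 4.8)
The plan is to deduce the lemma from the standard theory of inversion sets of Weyl group elements. Fix the positive system $R^+\subset R$ determined by $\Pi$, and recall the classical fact from the theory of Coxeter groups that for any reduced expression $u=s_{\gamma_1}\cdots s_{\gamma_r}$ of an element $u\in W$ (with each $\gamma_k\in\Pi$), the roots
\[
\rho_k := s_{\gamma_1}\cdots s_{\gamma_{k-1}}(\gamma_k),\qquad k=1,\ldots,r,
\]
are pairwise distinct, all positive, and their set equals the inversion set $N(u):=\{\gamma\in R^+: u^{-1}(\gamma)\in -R^+\}$. Crucially, this set has cardinality $\ell(u)=r$ and depends only on $u$, not on the chosen reduced expression. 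The positivity and distinctness follow from the length criterion, namely that $s_{\gamma_1}\cdots s_{\gamma_{k-1}}(\gamma_k)>0$ precisely because every prefix of a reduced word strictly increases in length.

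I would apply this first to $w^{-1}=s_{\alpha_1}\cdots s_{\alpha_n}=s_{\beta_1}\cdots s_{\beta_n}$. Setting $\gamma_i^\alpha:=s_{\alpha_1}\cdots s_{\alpha_{i-1}}(\alpha_i)$ and $\gamma_j^\beta:=s_{\beta_1}\cdots s_{\beta_{j-1}}(\beta_j)$, the fact above shows that both $\{\gamma_i^\alpha\}_i$ and $\{\gamma_j^\beta\}_j$ are repetition-free enumerations of the single set $N(w^{-1})$. Hence there is a unique bijection $p$ of $\{1,\ldots,n\}$ with $\gamma_i^\alpha=\gamma_{p(i)}^\beta$ for all $i$, and this is exactly the first displayed equality.

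It remains to check that this same $p$ satisfies the second equality, which is the real content of the lemma. The key is that, for a single reduced expression, the two families are related by one fixed transformation of roots. Writing $\delta_i^\alpha:=s_{\alpha_n}\cdots s_{\alpha_{i+1}}(\alpha_i)$ and factoring $w=u\,s_{\alpha_i}\,v$ with $u=s_{\alpha_n}\cdots s_{\alpha_{i+1}}$ and $v=s_{\alpha_{i-1}}\cdots s_{\alpha_1}$, so that $\gamma_i^\alpha=v^{-1}(\alpha_i)$, one computes
\[
w(\gamma_i^\alpha)=u\,s_{\alpha_i}\,v\,\bigl(v^{-1}(\alpha_i)\bigr)=u\,s_{\alpha_i}(\alpha_i)=-u(\alpha_i)=-\delta_i^\alpha,
\]
so that $\delta_i^\alpha=-w(\gamma_i^\alpha)$, and likewise $\delta_j^\beta=-w(\gamma_j^\beta)$. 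Applying the single invertible map $-w$ to the equality $\gamma_i^\alpha=\gamma_{p(i)}^\beta$ then yields $\delta_i^\alpha=\delta_{p(i)}^\beta$, which is exactly the second displayed equality. Thus the one permutation $p$ works for both.

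The main obstacle is conceptual rather than computational: one must see that a single $p$ is forced to satisfy both equations simultaneously, and this is precisely what the uniform relation $\delta_i=-w(\gamma_i)$ provides, since that relation is independent of the reduced expression. As a consistency check one may verify, by applying the standard fact directly to the reduced expression $w=s_{\alpha_n}\cdots s_{\alpha_1}$ (reindexing $i\mapsto n+1-i$), that $\{\delta_i^\alpha\}_i$ is itself a repetition-free enumeration of $N(w)$; this reconfirms that $p$ is a bijection compatible with the second family, in agreement with the derivation above.
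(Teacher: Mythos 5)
Your proof is correct and follows the same route as the paper: both rest on the standard fact (Humphreys, Section 1.7) that the roots built from the prefixes (resp.\ suffixes) of a reduced word enumerate, without repetition, an inversion set that depends only on the group element and not on the chosen reduced expression. In fact your write-up is more complete than the paper's one-line proof: the paper calls the lemma ``simply a restatement'' of that fact, which by itself only yields a permutation for each displayed equation separately, whereas your uniform identity $s_{\alpha_n}\cdots s_{\alpha_{i+1}}(\alpha_i) = -w\bigl(s_{\alpha_1}\cdots s_{\alpha_{i-1}}(\alpha_i)\bigr)$, being independent of the reduced expression, is exactly what forces the two permutations to coincide.
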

\begin{proof}
This is simply a restatement of the following standard theorem, see \cite[Section 1,7]{humphreys}.

Let $w = s_{\alpha_1}\cdots s_{\alpha_n}$ be a reduced decomposition. Put 
$\gamma_i = s_{\alpha_{n}}\cdots s_{\alpha_{i+1}}(\alpha_i)$. Then the roots 
$\gamma_1, \ldots , \gamma_n$ are all distinct and the set 
$\{\gamma_1, \ldots , \gamma_n\}$ equals $R^+\cap w^{-1}R^-$, which is the set of 
$\gamma\in R^+$ such that $w(\gamma)\in R^-$.

\end{proof}

Let $(X,Y,R,\check{R},\Pi)$ be a root data associated to the $k$-split semisimple algebraic group $G$. 
Associated to this data is a maximal torus, $\aT  = Y\otimes_{\Z} k^*$ and a dual torus, $\aT ^* = X\otimes_\Z k^*$, both with actions of the Weyl group $W$. 
The purpose of this section is to collect some results on the stabilizers of elements of $\aT $ in the Weyl group $W$.

Let $\lambda\in \aT$ and denote by $\langle \lambda \rangle$ the smallest closed subgroup of $\aT$ containing $\lambda$. 
\begin{claim}
The subgroup $\langle \lambda \rangle$ is the direct sum of a cyclic subgroup generated by $\zeta\in\aT$ with finite order and a torus $\aS\subset \aT$.
\end{claim}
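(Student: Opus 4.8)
The plan is to pass to the character lattice of the torus and translate the assertion into elementary statements about finitely generated subgroups of $k^*$. Recall that $\aT = Y\otimes_\Z k^*$ is naturally identified with $\Hom(X,k^*)$, so the characters of $\aT$ are indexed by $X$, with $x\in X$ acting by $\mu\mapsto x(\mu)$. Evaluating every character at $\lambda$ gives a homomorphism $\operatorname{ev}_\lambda\colon X\to k^*$; I set $M=\ker(\operatorname{ev}_\lambda)$ and let $A=\operatorname{ev}_\lambda(X)\subset k^*$ be its image, a finitely generated subgroup of $k^*$.

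The first step is to identify $\langle\lambda\rangle$ intrinsically. I claim that
\[
\langle\lambda\rangle=\{\mu\in\aT\mid x(\mu)=1\text{ for all }x\in M\}.
\]
Call the right-hand side $H_M$. It is Zariski-closed, being cut out by the monomial equations $x=1$ for $x\in M$, and it visibly contains $\lambda$, so $\langle\lambda\rangle\subseteq H_M$. For the reverse inclusion I would invoke the anti-equivalence between diagonalizable groups and finitely generated abelian groups: two characters of $\aT$ restrict to the same function on $\langle\lambda\rangle$ exactly when they agree on the cyclic group generated by $\lambda$, i.e.\ when their difference lies in $M$, so the character group of $\langle\lambda\rangle$ is $X/M$; the same holds for $H_M$ by construction, and since $\langle\lambda\rangle\subseteq H_M$ have equal character groups they must coincide. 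In particular $\langle\lambda\rangle$ is diagonalizable with character group $X/M\cong A$.

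Next I decompose $A$. As a finitely generated subgroup of the multiplicative group of a field, its torsion subgroup $C=A_{\mathrm{tors}}$ is finite and cyclic, say of order $m$, and $m$ is prime to $\operatorname{char}(k)$ (a relation $\zeta^p=1$ in characteristic $p$ forces $\zeta=1$). Hence $A\cong C\oplus \Z^r$ for some $r\ge 0$. Dualizing this splitting through the anti-equivalence gives a direct sum decomposition $\langle\lambda\rangle\cong\Hom(C,k^*)\times\Hom(\Z^r,k^*)$: the second factor is a subtorus $\aS\cong(k^*)^r$, while the first is the group $\mu_m$ of $m$-th roots of unity, cyclic of order $m$ since $C\subseteq k^*$ already provides a primitive such root. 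Choosing a generator $\zeta$ yields $\langle\lambda\rangle=\langle\zeta\rangle\oplus\aS$ with $\zeta$ of finite order and $\aS$ a torus, as claimed. Concretely, applying Smith normal form to $M\subset X$ — the cyclicity of $C$ forcing a single nontrivial elementary divisor $m$ — one obtains coordinates on $\aT$ in which the defining equations of $\langle\lambda\rangle$ read $u_1^m=1$ and $u_2=\dots=u_{n-r}=1$ with $u_{n-r+1},\dots,u_n$ free (here $n=\operatorname{rank}(X)$), exhibiting the splitting into $\mu_m$ and $(k^*)^r$ explicitly.

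I expect the only genuine subtlety to be the intrinsic identification of $\langle\lambda\rangle$ with $H_M$ in the first step, namely justifying that the minimal closed subgroup containing $\lambda$ has precisely the relation lattice $M$; this is where the duality theory of diagonalizable groups, and the standing hypothesis that $k$ is algebraically closed, does the real work. The remaining ingredients — that finite subgroups of $k^*$ are cyclic, that $m$ is prime to $\operatorname{char}(k)$, and that a finitely generated abelian group splits off its torsion — are routine.
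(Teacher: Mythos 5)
Your proof is correct, but it takes a genuinely different route from the paper's. The paper argues top-down on the group side: $\langle\lambda\rangle$, being a closed subgroup of $\aT$, is an algebraic group with finitely many components; its identity component $\aS$ is a torus; the surjection $\langle\lambda\rangle\to\langle\lambda\rangle/\aS$ splits by the anti-equivalence with finitely generated abelian groups (the dual injection has free cokernel $X(\aS)$); and---this is the paper's key point---the component group is \emph{cyclic} because the union of the components containing the powers $\lambda^i$ is a closed subgroup, so by minimality of $\langle\lambda\rangle$ it is all of $\langle\lambda\rangle$, i.e.\ the component group is generated by the class of $\lambda$. You work bottom-up on the character side: you identify the character group of $\langle\lambda\rangle$ with $A=\operatorname{ev}_\lambda(X)\subseteq k^*$, so cyclicity comes not from minimality but from the fact that finite subgroups of $k^*$ are cyclic (and automatically of order prime to $\operatorname{char}k$), and the splitting is the structure theorem for the finitely generated group $A$, dualized back. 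Both proofs lean on the same duality, and both invoke the defining minimality of $\langle\lambda\rangle$ (you use it, via density of $\{\lambda^i\}$, exactly at the step $\langle\lambda\rangle=H_M$, which you rightly flag as the crux). What your version buys is explicitness: the character group of $\langle\lambda\rangle$ is literally the subgroup of $k^*$ generated by the values $x(\lambda)$, the order $m$ of $\zeta$ and the rank $r$ of $\aS$ are read off from it, Smith normal form gives explicit coordinates, and the characteristic-$p$ constraint is transparent. What the paper's version buys is brevity and softness: given standard facts about algebraic groups it never needs to compute the character group at all, only to quote duality for the splitting and minimality for cyclicity.
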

\begin{proof}
The subgroup $\langle \lambda \rangle$ has finitely many components.
Let $\aS$ be the identity component of $\langle \lambda \rangle$.
The morphism from $\langle \lambda \rangle\to \langle \lambda \rangle / \aS$ is a split surjection, as the category of diagonalizable groups is dual to the category of finitely generated abelian groups.
Considering that the set of components of $\langle \lambda \rangle$ containing the powers $\lambda^i, i\in \Z$ is a closed subgroup of $\aT$, we find that the group of components is a cyclic group.
It follows that $\langle \lambda \rangle$ is the direct sum of a cyclic subgroup generated by $\zeta\in\aT$ with finite order and a torus $\aS\subset \aT$.
\end{proof}

\begin{definition}
Let $\tilde{\alpha}\in R^+$ be the highest root. 
Define an augmented standard parabolic subgroup of $W$ to be a subgroup $W'$ such that there is a subset $I\subset \Pi$ for which $W'$ is generated by $\{s_\alpha\}_{\alpha\in I}\cup \{s_{\tilde{\alpha}}\}$. 
A subgroup is called an augmented parabolic subgroup if it is $W$-conjugate to an augmented standard parabolic subgroup.
\end{definition}

\begin{corollary}
Assume $(X,Y,R,\check{R},\Pi)$ to be simply connected semisimple. 
Given $\lambda\in \aT$, the centralizer of $\lambda$ in the Weyl group $W$ is the intersection of a parabolic subgroup with an augmented parabolic subgroup.
\end{corollary}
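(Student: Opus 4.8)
The plan is to use the preceding Claim, which writes $\langle\lambda\rangle = \langle\zeta\rangle\oplus\aS$ with $\zeta$ of finite order and $\aS\subset\aT$ a subtorus, to split the stabilizer into a torsion part and a torus part. Since $W$ acts on $\aT = Y\otimes k^*$ by algebraic (hence continuous) group automorphisms, an element $w\in W$ fixes $\lambda$ if and only if it fixes the closed subgroup $\langle\lambda\rangle$ that $\lambda$ topologically generates, and this happens exactly when $w$ fixes $\zeta$ and fixes $\aS$ pointwise. So I would first establish
\[
\operatorname{Stab}_W(\lambda) = \operatorname{Stab}_W(\zeta)\cap \operatorname{Stab}_W(\aS),
\]
where the second factor denotes the pointwise stabilizer, and then show that the torus factor is parabolic and the torsion factor is augmented parabolic.

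For the torus factor, write $\aS = Y'\otimes k^*$ for a saturated sublattice $Y'\subseteq Y$. An element $w\in W$ fixes $\aS$ pointwise precisely when it fixes $Y'$, equivalently when it fixes the real subspace $Y'\otimes\R$ of the reflection representation $Y\otimes\R$ pointwise. I would then invoke the standard fact that the pointwise stabilizer of any subset of the reflection representation of $W$ is generated by the reflections it contains and is a parabolic subgroup (cf. \cite{humphreys}); here it is $\langle s_\alpha : \alpha\in R,\ \alpha|_{Y'}=0\rangle$. Hence $\operatorname{Stab}_W(\aS)$ is parabolic. In particular every parabolic subgroup arises this way, a fact I will reuse at the end.

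The heart of the argument is the torsion factor $\operatorname{Stab}_W(\zeta)$, and this is the step I expect to be the main obstacle. I would realize $\zeta$ by a rational point: $\zeta$ lies in $Y\otimes\mu_m(k)$ for some $m$, and fixing an identification $\mu_m(k)\cong\tfrac1m\Z/\Z$ presents $\zeta$ as the class of some $x\in Y\otimes\Q$, with $w(\zeta)=\zeta$ if and only if $w(x)\equiv x \pmod{Y}$. Because the datum is simply connected, $Y$ is the coroot lattice, so $W_a := W\ltimes Y$ is exactly the affine Weyl group acting on $Y\otimes\R$, a Coxeter group whose fundamental alcove has walls $\ker\alpha_i$ for $\alpha_i\in\Pi$ together with the affine wall $\{v : \langle v,\tilde{\alpha}\rangle = 1\}$. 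The linear-part projection $W_a\to W$ restricts to an isomorphism from $\operatorname{Stab}_{W_a}(x)$ onto $\operatorname{Stab}_W(\zeta)$, so it suffices to understand point stabilizers in $W_a$. Conjugating $x$ into the closure of the fundamental alcove, its stabilizer is generated by the reflections in the alcove walls through it, and the linear parts of these reflections lie among $\{s_{\alpha_1},\dots,s_{\alpha_n}\}$ and $s_{\tilde{\alpha}}$ (the last being the linear part of the affine reflection in $\{v : \langle v,\tilde{\alpha}\rangle = 1\}$). Therefore $\operatorname{Stab}_W(\zeta)$ is $W$-conjugate to a group generated by $\{s_\alpha\}_{\alpha\in I}$ for some $I\subseteq\Pi$, possibly together with $s_{\tilde{\alpha}}$; that is, it is an augmented standard parabolic subgroup when the $\tilde{\alpha}$-wall occurs, and an ordinary standard parabolic otherwise. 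The delicate points are checking that the point-conjugation induces a genuine $W$-conjugacy of the linear-part subgroup and correctly matching the affine node to $s_{\tilde{\alpha}}$; this is exactly where simple-connectedness (so that $W\ltimes Y$ is the affine Weyl group) and the highest root enter.

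Finally I would assemble the two factors. When $\operatorname{Stab}_W(\zeta)$ is genuinely augmented, the identity from the first step already exhibits $\operatorname{Stab}_W(\lambda)$ as the intersection of a parabolic subgroup with an augmented parabolic subgroup. When instead $\operatorname{Stab}_W(\zeta)$ is an ordinary parabolic, the $\tilde{\alpha}$-wall is absent, so $\operatorname{Stab}_W(\zeta)$ is itself the pointwise stabilizer of a point $u\in Y\otimes\R$; intersecting with the torus factor then gives the pointwise stabilizer of the span of $Y'\otimes\R$ and $u$, which is again parabolic, and I would write this group as its own intersection with $W$, noting that $W$ is the augmented standard parabolic given by $I=\Pi$. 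Either way $\operatorname{Stab}_W(\lambda)$ is the intersection of a parabolic subgroup with an augmented parabolic subgroup, as claimed.
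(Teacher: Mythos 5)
Your proof is correct, and its skeleton coincides with the paper's: both use the preceding Claim to write $\operatorname{Stab}_W(\lambda)$ as the intersection of the pointwise stabilizer of $\aS$ with the stabilizer of $\zeta$, and both treat the torsion part the same way, lifting $\zeta$ to a rational point and using that, by simple-connectedness, $W\ltimes Y$ is the affine Weyl group, so that point stabilizers are generated by reflections in the walls of the closed fundamental alcove, whose linear parts lie in $\{s_\alpha\}_{\alpha\in\Pi}\cup\{s_{\tilde{\alpha}}\}$. You diverge genuinely at the torus factor: the paper argues via algebraic groups, noting that $Z_G(\aS)$ is a connected reductive group which is a Levi subgroup of a parabolic of $G$ \cite[Proposition 1.22]{digne}, hence $Z_W(\aS)$ is parabolic; you instead write $\aS = Y'\otimes_\Z k^*$ for a saturated sublattice $Y'\subseteq Y$, reduce pointwise fixing of $\aS$ to pointwise fixing of $Y'\otimes_\Z \R$ (this needs only that $k^*$ is infinite, harmless here), and quote the reflection-group fact that pointwise stabilizers of subsets of the reflection representation are parabolic. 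Your route is more elementary, never leaving Coxeter theory, while the paper's imports the structure theory of reductive groups; both are complete modulo standard references. You are in fact more careful in the endgame: the paper asserts that $\operatorname{Stab}_W(\zeta)$ is always an augmented parabolic, which fails when the alcove point avoids the affine wall --- for instance $\zeta$ of order $4$ in type $A_1$ has trivial stabilizer, which is not an augmented parabolic under the paper's definition, since every augmented parabolic contains a conjugate of $s_{\tilde{\alpha}}$. Your case split, using that an intersection of parabolic subgroups is again parabolic and that $W$ itself is the augmented standard parabolic with $I=\Pi$, repairs this gap at no extra cost.
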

\begin{proof}
As the Weyl group acts by algebraic automorphisms of $\aT$, the stabilizer of $\lambda$ in $W$ is equal to the intersection of the stabilizer in $W$ of $\aS$ with the stabilizer of $\zeta$. 
As $\aS\subset G$ is a torus, the centralizer $Z_G(\aS)$ is a connected, reductive subgroup \cite[section 26.2]{humphreys-lag}.
Moreover the centralizer $Z_G(\aS)$ is the Levi subgroup of a parabolic subgroup of $G$ (see \cite[Proposition 1.22]{digne}) and hence the centralizer $Z_W(\aS)$ is a parabolic subgroup.

It remains only to show that the centralizer of an element $\zeta\in \aT$ of finite order is an augmented parabolic subgroup. 
We may restrict our attention to the group $Y\otimes_\Z \langle \zeta \rangle$. 
If $\zeta$ has order $n$, then this is isomorphic to the group $Y\otimes_\Z \frac{1}{n}\Z/\Z$.
Lift $\zeta\in Y\otimes_\Z \langle \zeta \rangle$ to an element in the Euclidean space $z\in Y\otimes_\Z \frac{1}{n}\Z\subset Y\otimes_\Z \R$, and note that for $w\in W$ we have $w(\zeta)=\zeta$ if and only if $w(z)-z\in Y\otimes \Z\subset Y\otimes_\Z \R$.
This is the same as asserting that there exists $t\in Y$ with $w(z)-y = z$.
For $w\in W, y\in Y$ fixed, the transformation $x\mapsto w(x)-y$ is an element of the affine group $W_a$, which is the semidirect product of the Weyl group with the translation group $Y$.
As $Y$ has a basis given by $\{\check{\alpha}\}_{\alpha\in \Pi}$, $W_a$ is a Coxeter group with Coxeter generators $\{s_\alpha\}_{\alpha\in \Pi}\cup \{s_{\tilde{\alpha},1}\}$ where $s_{\tilde{\alpha},1}(x) = s_{\tilde{\alpha}}(x) + \check{\alpha}$ (see \cite{humphreys}).
Examining the Weyl group stabilizers of elements of the fundamental alcove $\{x\in Y\otimes_\Z \R\mid \langle x,\alpha\rangle>0 \alpha\in \Pi, \langle x,\tilde{\alpha}\rangle =1\}$, whose walls are the hyperplanes orthogonal to $\alpha, \alpha\in \Pi$ and $\tilde{\alpha}$, we find that the stabilizer of $\zeta\in \aT$ is conjugate to an augmented parabolic subgroup.
\end{proof}
\begin{remark}\label{split-projection}
Let $(X,Y,R, \check{R},\Pi)$ be any root data for which $X$ contains the fundamental weights $\{\omega_\alpha\}_{\alpha\in \Pi}\subset \Q\cdot R\subset X\otimes_\Z\Q$, and let $Y' = \Z \cdot \check{R}$ be the $\Z$-span of the coroots in $Y$.
We have a natural inclusion $i:Y'\hookrightarrow Y$.
Define the projection $p:Y\to Y'$ by the following formula,
\begin{align*}
p(y) = \sum_{\alpha\in \Pi}\omega_\alpha(y)\check{\alpha}.
\end{align*}
Then $p$ is a split surjection, $i\circ p(y') = y'$ for $y'\in Y'$.
Moreover, $p,i$ are both $W$-equivariant.
Abusing notation, let $p:Y\otimes_\Z k^*$ and $i:Y'\otimes_\Z k^*$ denote the corresponding maps on tori. 
Again, $p:Y\otimes_\Z k^*\to Y'\otimes_\Z k^*$ is a split $W$-equivariant surjection, and hence is a bijection on $W$-orbits.
\end{remark}
 
We compute the stabilizers of weights in rank 2, which by the above remark is equivalent to computing $W^{\alpha,\beta}$-orbits on general tori associated to simply connected root data.
Given a rank 2 root system let $(X,Y,R,\check{R},\Pi)$ be a root data for which $X$ has the basis $\{\omega_\alpha\}_{\alpha\in \Pi}$ and $Y$ has the basis $\{\check{\alpha}\}_{\alpha\in \Pi}$.
We compute the stabilizers of elements $\lambda\in \aT = Y\otimes_\Z k^*$.

In type $A_1\times A_1, A_2$ we find that the stabilizer of any element is already a parabolic subgroups of $W$.

In types $B_2, G_2$, consider the action of $W$ on the vector space $Y\otimes_\Z k$.
The only non-parabolic, augmented standard parabolic subgroup of $W$ is generated by the reflection $s_{\tilde{\alpha}}$ and one other reflection.
As the two reflections have distinct 1-dimensional eigenspaces we find that the subspace of fixed points on $Y\otimes_\Z k = \operatorname{Lie}(Y\otimes_\Z k^*)$ is the zero subspace.
It follows that the group of fixed points on $\aT$ of a non-parabolic, augmented standard parabolic subgroup of $W$ is a finite torsion subgroup of $\aT$.

Consider, as above, the real vector space $E = Y\otimes_\Z \R$.
Recall that the torsion subgroup of $\aT$ may be embedded as a subgroup of $Y\otimes_\Z \Q \subset E / Y$.
It is plain to check that the set of elements $\lambda$ in the fundamental alcove of $E$ which are stabilized by $s_{\tilde{\alpha},1}$ and one other Coxeter reflection actually lie in $\frac{1}{2} Y$, which gives us the following corollary.

\begin{corollary} \label{orbit}
If $(X,Y,R,\check{R},\Pi)$ is a simply connected semisimple root system of rank 2, and $\lambda\in \aT = Y\otimes_\Z k^*$ satisfies $\operatorname{stab}_{W}(\lambda)$ is a non-parabolic subgroup of $W$, then $\lambda$ satisfies $\lambda^2 = 1\in \aT$, and for $\alpha\in \Pi$,
\begin{align*}
\langle \alpha,\lambda\rangle = \begin{cases}
1 \text{, if $s_\alpha(\lambda) = \lambda$,} \\
-1 \text{, if $s_\alpha(\lambda) \not = \lambda$.}
\end{cases}
\end{align*}
\end{corollary}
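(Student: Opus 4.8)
The plan is to work entirely inside the rank-2 root system, since we are given $(X,Y,R,\check{R},\Pi)$ simply connected semisimple of rank 2. The goal is to show that any $\lambda\in\aT$ whose stabilizer $\operatorname{stab}_W(\lambda)$ is \emph{non-parabolic} must be $2$-torsion, and to compute the pairings $\langle\alpha,\lambda\rangle$ for $\alpha\in\Pi$. By the classification of rank-2 simply connected semisimple systems, the type must be $B_2$ or $G_2$, since in types $A_1\times A_1$ and $A_2$ every stabilizer is already parabolic, as noted in the preceding discussion. So first I would reduce to the case $W$ of type $B_2$ or $G_2$ and invoke the Corollary on centralizers: $\operatorname{stab}_W(\lambda)$ is the intersection of a parabolic with an augmented parabolic subgroup. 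For this to be non-parabolic, it must contain a conjugate of the non-parabolic augmented standard parabolic subgroup, which (as remarked) is generated by $s_{\tilde{\alpha}}$ together with one simple reflection.

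**Locating $\lambda$ via the affine Weyl group and the fundamental alcove.**
Next I would pass to the real vector space $E = Y\otimes_\Z\R$ and use the affine-Weyl-group picture set up in the proof of the preceding Corollary. Writing $\zeta$ for the torsion part of $\langle\lambda\rangle$ and lifting it to $z\in Y\otimes_\Z\Q\subset E$, the condition that $\operatorname{stab}_W(\lambda)$ be non-parabolic forces $z$ (up to the $W_a$-action) to lie on a wall of the fundamental alcove fixed by $s_{\tilde{\alpha},1}$ and one further Coxeter reflection. The key geometric input, already asserted in the text just before the Corollary, is that such points lie in $\tfrac{1}{2}Y$. I would make this explicit by solving the two wall equations $\langle x,\alpha\rangle = 0$ (or $\langle x,\beta\rangle=0$) and $\langle x,\tilde{\alpha}\rangle = 1$ simultaneously in $E$ for each of $B_2$ and $G_2$, reading off the coordinates in the $\{\check{\alpha}\}$-basis, and checking that they are half-integers. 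This immediately yields $2z\in Y$, i.e. $\zeta^2 = 1$; combined with the fact that $\aS$ must be trivial here (the fixed subspace on $\operatorname{Lie}(\aT)$ is zero, as shown), we get $\langle\lambda\rangle$ finite of exponent $2$, so $\lambda^2 = 1$.

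**Computing the pairings.**
With $\lambda^2=1$ in hand, each $\langle\alpha,\lambda\rangle = e^{\pi i\langle\alpha,z\rangle}\in\{\pm1\}$ is determined by the parity of the integer $\langle\alpha,2z\rangle$, where $2z\in Y$. Concretely, $\langle\alpha,\lambda\rangle = 1$ exactly when $\langle\alpha,z\rangle\in\Z$, equivalently when $s_\alpha$ fixes the image of $z$ in $E/Y$, i.e. when $s_\alpha(\lambda)=\lambda$; otherwise $\langle\alpha,\lambda\rangle=-1$. This is essentially a restatement of the identity $s_\alpha(\lambda) = \lambda\cdot\langle\alpha,\lambda\rangle^{-\check{\alpha}}$ (the torus version of $s_\alpha(x) = x - \langle x,\check\alpha\rangle\alpha$) specialized to the $2$-torsion element $\lambda$: since $\langle\alpha,\lambda\rangle=\pm1$, we have $s_\alpha(\lambda)=\lambda$ iff $\langle\alpha,\lambda\rangle = 1$. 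I would verify the dichotomy directly from the explicit half-integer coordinates computed for $B_2$ and $G_2$, confirming in each case that the simple roots fixing $\lambda$ pair to $+1$ and those not fixing it pair to $-1$.

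**Main obstacle.**
The genuinely nontrivial step is the geometric claim that the relevant alcove-wall intersection points lie in $\tfrac12 Y$; everything else is bookkeeping once $\lambda^2=1$ is established. The delicate point is ensuring the computation is carried out in the correct lattice normalization for the simply connected datum (where $Y$ has basis $\{\check\alpha\}_{\alpha\in\Pi}$ and $\tilde\alpha$ is the highest root, \emph{not} the highest short root), since the coordinates of the fixed points, and hence their half-integrality, depend on writing $\tilde\alpha$ correctly in terms of the simple coroots in each of types $B_2$ and $G_2$. I would treat $B_2$ and $G_2$ as two separate finite calculations and check half-integrality in each.
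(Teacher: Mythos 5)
Your proposal is correct and follows essentially the same route as the paper: reduction to types $B_2$ and $G_2$, the description of the stabilizer via the intersection of a parabolic with an augmented parabolic subgroup (forcing it to be a conjugate of the non-parabolic augmented standard parabolic, whose fixed subspace on $\operatorname{Lie}(\aT)$ vanishes), and then the affine Weyl group/fundamental alcove argument showing the relevant fixed points lie in $\frac{1}{2}Y$. Your additions are just elaborations of steps the paper leaves implicit — the explicit wall-equation computation behind its ``it is plain to check,'' and the derivation of the pairing dichotomy from the reflection formula on the torus, which uses the simply connected normalization of $Y$ exactly as you note.
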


\subsection{The datum $G$ and its conditions} \label{conditions}
\begin{remark}
We now define an abstract set of datum on which our definition of localized quiver Hecke algebra depends.
The definition of localized quiver Hecke algebra we give extends that of \cite{rouquier-2km}, \cite{rouquier-qha}.
We briefly paraphrase the set-up of \cite{rouquier-qha} where actually two quivers are given. 

Fix a finite set $I$ as well as a Cartan matrix $C = (a_{i,j})_{i,j\in I}$.
This determines the first quiver, whose vertices are identified with $I$, and with $-a_{i,j}$ arrows from $i$ to $j$.
Next, consider the action of the symmetric group $\mathfrak{S}_n$ on the set $I^n$.
We form the quiver $\Psi_{I,n}(\Lambda)$ whose vertices are identified with a fixed $\mathfrak{S}_n$-orbit $\Lambda$ in $I^n$.
There are two types of arrows for the quiver $\Psi_{I,n}(\Lambda)$. 
The first type is an arrow $x_i:\lambda\to \lambda$ for each $1\le i \le n$, 
and second type is an arrow $\tau_i:\lambda\to s_i(\lambda)$, where $s_i\in \mathfrak{S}_n$ is the element $(i,i+1)$.

Finally, the first quiver determines a set of polynomials $Q = (Q_{i,j}(u,v))_{i,j\in I}$ with which one constructs for each $\mathfrak{S}_n$-orbit in $I^n$ the \emph{quiver Hecke algebra}, $H(Q)_\Lambda$.
The algebra $H(Q)_\Lambda$ is a quiver algebra with relations on the second quiver $\Psi_{I,n}$, with relations determined by the polynomial data $Q$.
For $\Gamma$ of type $A$ the algebras $H(Q)_\Lambda$ may be identified with localizations of affine Hecke algebras of type $A$.
The quiver Hecke algebras defined in this paper do not give a way of generalizing the first quiver, but instead generalize the data $Q$ and second quiver $\Psi_{I,n}(\Lambda)$ to give a quiver Hecke algebra presentation of affine Hecke algebras of any type.

\end{remark}

Fix $h_0\in k$. 
Then $\qA$ is the specialization $\qA = \idA\otimes_{\Z[h]}k[h]/(h-h_0)$ of the interpolating ring $\idA$ at $h\to h_0$,
and $h\demazure_\alpha$ as the specialization of the Demazure operator. 
Thus, either $h_0=0$ in which case $\qA$ is isomorphic to the symmetric algebra and $h\demazure_\alpha$ is the BGG operator $\bgg_\alpha$, or $h_0\not=0$ in which case $\qA$ is isomorphic to the group ring $k[X]$ and $h\demazure_\alpha$ is a scaler multiple of the Demazure operator.
Recall that $\qT = \Hom_{k-alg}(\qA,k)$ is the set of $k$-algebra homomorphisms from $\qA$ to $k$, which is either isomorphic to $\mathfrak{h}=Y\otimes_\Z k$ for $h_0=0$, or isomorphic to $\aT=Y\otimes_\Z k^*$, for $h_0\not=0$. We have 
\begin{align*}
\lqA=\bigoplus_{\lambda\in \qT} \qA_\lambda,
\end{align*}
the localized, non-unital algebra associated to $\qA$, where $\qA_\lambda$ is the localization of $\iA$ at $\lambda\in \iT$.

Let $G=(G_\alpha^\lambda)_{\lambda\in \qT , \alpha\in \Pi}$
be a collection of non-zero rational functions, 
$G_{\alpha}^\lambda\in \qA_\lambda\backslash \{0\} = \lqA1_\lambda\backslash \{0\}$.
We list now a few conditions that this data is required to satisfy before we can define the localized quiver Hecke algebra $\qhecke$.

For notational purposes, we need the following lemma.
\begin{lemma}\label{root-choice}
Let $\alpha,\beta\in\Pi$. We put $W^{\alpha,\beta} = \langle s_\alpha,s_\beta\rangle$,
the subgroup of $W$ generated by $s_\alpha,s_\beta$. We also put
$m = m_{\alpha,\beta}$ as the order of $s_\alpha s_\beta$ when $\alpha\not=\beta$ and $2$ otherwise,
and finally we write $w_{\alpha,\beta}$ for the longest element in $W^{\alpha,\beta}$.
We have,
\begin{align*}
w_{\alpha,\beta}(\alpha) = 
\begin{cases}
-\alpha & \text{ if $m$ even},\\
-\beta & \text{ if $m$ odd}.
\end{cases}
\end{align*}
Instead of using cases, we will simply write $w_{\alpha,\beta}s_\alpha(\alpha)$
which is equal to $\alpha$ for $m_{\alpha,\beta}$ even, and $\beta$ for $m_{\alpha,\beta}$
odd.

\end{lemma}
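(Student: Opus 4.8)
The plan is to reduce the claim to a direct case-by-case verification over the rank-two dihedral groups, since $W^{\alpha,\beta}=\langle s_\alpha,s_\beta\rangle$ depends only on the pair $(\alpha,\beta)$ and acts faithfully on the two-dimensional real span of $\{\alpha,\beta\}$. First I would recall the standard description of the longest element $w_{\alpha,\beta}$ of a dihedral group of order $2m$: it is the unique element sending every positive root of the sub-root-system $R\cap(\Z\alpha+\Z\beta)$ to a negative root, and it coincides with $(s_\alpha s_\beta)^{m/2}$ when $m$ is even and with $(s_\alpha s_\beta)^{(m-1)/2}s_\alpha$ (equivalently an alternating word of length $m$ beginning and ending appropriately) when $m$ is odd. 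The key structural fact I would use is that $-w_{\alpha,\beta}$ induces an automorphism of the simple system $\{\alpha,\beta\}$ of the rank-two subsystem: since $w_{\alpha,\beta}$ negates all positive roots, $-w_{\alpha,\beta}$ permutes $\{\alpha,\beta\}$, and whether it fixes or swaps them is exactly controlled by the parity of $m$.

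The main computation is then to determine that permutation. When $m$ is even, the Coxeter element $s_\alpha s_\beta$ acts as a rotation by $2\pi/m$ on the two-dimensional real span, so $(s_\alpha s_\beta)^{m/2}=w_{\alpha,\beta}$ is rotation by $\pi$, which is precisely $-\mathrm{id}$; hence $w_{\alpha,\beta}(\alpha)=-\alpha$. When $m$ is odd, $w_{\alpha,\beta}$ is instead a reflection (an alternating word of odd length $m$), and one checks from the reflection geometry that it exchanges the two walls of the dihedral chamber, so that $-w_{\alpha,\beta}$ swaps $\alpha$ and $\beta$; hence $w_{\alpha,\beta}(\alpha)=-\beta$. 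The degenerate case $\alpha=\beta$ with $m=2$ is trivial: $w_{\alpha,\alpha}=s_\alpha$ and $s_\alpha(\alpha)=-\alpha$, consistent with the even branch. This establishes the displayed formula.

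For the final sentence of the statement, I would simply observe that the two cases are unified by the expression $w_{\alpha,\beta}s_\alpha(\alpha)$: applying the formula just proved, $w_{\alpha,\beta}s_\alpha(\alpha)=w_{\alpha,\beta}(-\alpha)=-w_{\alpha,\beta}(\alpha)$, which equals $-(-\alpha)=\alpha$ when $m$ is even and $-(-\beta)=\beta$ when $m$ is odd. Thus $w_{\alpha,\beta}s_\alpha(\alpha)$ records the common positive root produced in either parity, justifying the uniform notation used in the sequel.

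I expect the only genuine obstacle to be pinning down, cleanly and uniformly, the rotation-versus-reflection dichotomy for the longest element according to the parity of $m$; everything else is forced once $-w_{\alpha,\beta}$ is identified as a diagram automorphism of the rank-two simple system. If one prefers to avoid the rotation argument, an equivalent route is to invoke Lemma \ref{permute} applied to the reduced word for $w_{\alpha,\beta}$ to track the image $s_\alpha(\alpha)$ through the alternating expression, but the geometric rank-two verification is the most transparent.
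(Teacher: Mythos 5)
Your proof is correct. The paper in fact states Lemma \ref{root-choice} without any proof, treating it as a standard fact about rank-two (dihedral) Weyl groups, and your argument --- the longest element is $(s_\alpha s_\beta)^{m/2}=-\mathrm{id}$ on the span of $\{\alpha,\beta\}$ for $m$ even, and a reflection swapping the two walls (so $-w_{\alpha,\beta}$ exchanges $\alpha$ and $\beta$) for $m$ odd, with the degenerate case $\alpha=\beta$ checked directly --- is precisely the standard verification the paper implicitly invokes, including the deduction $w_{\alpha,\beta}s_\alpha(\alpha)=-w_{\alpha,\beta}(\alpha)$ for the uniform notation.
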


\begin{definition}
Let $\lambda\in \qT $ $\alpha\in \Pi$. The weight $\lambda$ is said to be $\alpha$-exceptional
if there exists $\beta\in \Pi$ with $\lambda$ not parabolic with
respect to $W^{\alpha,\beta}$ and $s_\alpha(\lambda)\not = \lambda$.
If $\beta$ is as above, then it is automatic that $m_{\alpha,\beta} = 4,6$.
\end{definition}

Let $G = (G_\alpha^\lambda)_{\lambda\in \iT, \alpha\in \Pi}$ and
assume that $G$ satisfies the following conditions.
\begin{enumerate}
\item
For $\lambda,\alpha$ as above,
\begin{align*}
s_\alpha(G_\alpha^\lambda) = 
G_\alpha^{s_\alpha(\lambda)}.
\end{align*}
We shall refer to this as the \emph{associative relation} on $G$.
\item
If $s_\alpha(\lambda)=\lambda$ then $G_\alpha=1$.
\item
For any $\alpha,\beta\in \Pi$, 
\begin{align*}
w_{\alpha,\beta} s_\alpha (G_{\alpha}^\lambda) = 
G_{w_{\alpha,\beta}s_\alpha(\alpha)}^{w_{\alpha,\beta} s_\alpha(\lambda)}
\end{align*}
where, again, $w_{\alpha,\beta}\in W^{\alpha,\beta}$ is the longest element. We will refer to this relation
as the \emph{braid relation} on $G$.
Note that in the case $\alpha=\beta,$ we have that $w_{\alpha,\alpha} = s_\alpha$ 
and the condition is vacuous. 
\item
If $\lambda$ is $\alpha$-exceptional then $G_\alpha^\lambda=1$.
\end{enumerate}

\subsection{The localized quiver Hecke algebra} \label{relations}
Let 
$G = (G_\alpha^\lambda)_{\lambda\in \qT , \alpha\in \Pi}$
be a collection satisfying the conditions in section \ref{conditions}.
We define the localized quiver Hecke algebra $\qhecke$ associated to this choice in analogy
with a quiver algebra with relations over the ring $\aA$, see \cite{rouquier-qha}. 
Underlying this construction of a quiver algebra with relations is the quiver with vertices $\qT $,
and arrows $f:\lambda\to \lambda$,
$r^\lambda_{\alpha}:\lambda\to s_{\alpha}(\lambda)$, whenever 
$f\in \qA_\lambda,  \alpha\in \Pi$. We remark that the arrows $r_\alpha^\lambda$ give precisely the Cayley graph of the action of $\{s_\alpha\mid \alpha\in \Pi\}$ on $\iT$.

\begin{definition}
First, define 
$\widetilde{\qhecke}$ as the non-unitary algebra given by adjoining
generators $r_{\alpha}^\lambda$ to $\lqA$ which satisfy the following relations.
\begin{itemize}
\item
$r_{\alpha}^{\lambda} 1_{\nu} = 1_{s_{\alpha}(\nu)}r_{\alpha}^{\lambda} =
\delta_{\lambda, \nu}r_{\alpha}^{\lambda}$.
\item
$r_{\alpha}^{s_{\alpha}(\lambda)}r_{\alpha}^{\lambda} = 
\begin{cases}
G_\alpha^\lambda
&\text{ if $s_\alpha(\lambda) \not = \lambda$,} \\
h_0 r_\alpha^\lambda
&\text{ if $s_\alpha(\lambda)=\lambda$}.
\end{cases}$
\item
For $f\in \aA_\lambda$, 
\begin{align*}
r_{\alpha}^{\lambda} f - s_{\alpha}(f)r_{\alpha}^{\lambda}
=\begin{cases} 0 &\text{, if $s_\alpha(\lambda)\not =\lambda$,}\\
h \demazure_\alpha(f) &\text{, if $s_\alpha(\lambda)=\lambda$}.
\end{cases}
\end{align*}
\item
For $\alpha, \beta\in \Pi$ distinct and $\lambda\in \qT $ a \emph{standard parabolic weight}
with respect to $\{\alpha, \beta\}$, 
or $\lambda$ which is not parabolic with respect to $W^{\alpha,\beta}$, but fixed by one of $s_\alpha, s_\beta$:
\begin{align*}
r_{\alpha_m}^{\lambda_m}\cdots r_{\alpha_1}^{\lambda_1} = 
r_{\beta_m}^{\mu_m}\cdots r_{\beta_1}^{\mu_1},
\end{align*}
where $m=m_{\alpha, \beta}$ is the order of $s_{\alpha}s_\beta$ in $W$, 
\begin{align*}
\alpha_i =& \begin{cases}
\alpha &\text{, $i$ odd,} \\
\beta & \text{, $i$ even,}
\end{cases} \\
\beta_i =& \begin{cases}
\beta &\text{, $i$ odd,} \\
\alpha & \text{, $i$ even,}
\end{cases} \\
\end{align*}
and $\lambda_i = s_{\alpha_{i-1}}\cdots s_{\alpha_1}(\lambda)$, 
$\mu_i = s_{\beta_{i-1}}\cdots s_{\beta_1}(\mu)$. 
This is known as the braid relation.
\end{itemize}
Finally, let $\mathscr{I}_\lambda$ be the right $\qA_\lambda$-module
consisting of elements $I\in \widetilde{\qhecke}1_\lambda$ such that there is
$f\in \qA_\lambda \backslash \{0\}$ with $I\cdot f = 0$.
We define $\qhecke = \widetilde{\qhecke}/\bigoplus_\lambda I_\lambda$.
Thus, there is no right polynomial torsion in $\qhecke$. 
\end{definition}

Given simply connected root datum 
we will produce in section \ref{isomorphism} a family $G$ and
an isomorphism 
$\qhecke\to \lhecke$.

%\begin{document} to fool vim editor
\subsection{The PBW property and a faithful representation} \label{PBW-section}
This section analyzes the structure of $\qhecke$.
We start by defining a filtration $(\mathscr{F}^n)$ on $\qhecke$, letting 
$\mathscr{F}^n\subset \qhecke$ be the right $\lqA$-linear span of 
all products $r_{\alpha_k}^{\lambda_k} \cdots r_{\alpha_1}^{\lambda_1}$
with up to $n$ terms in them. We see 
$\mathscr{F}^n\cdot \mathscr{F}^m\subset \mathscr{F}^{n+m}$.

\begin{lemma}
Fix some $\lambda\in \qT$ and let $\mf{B}_1 = ( \alpha_n, \cdots , \alpha_1)$,
$\mf{B}_2 = ( \beta_n, \cdots, \beta_1)$, $\alpha_i,\beta_i\in \Pi$, be two ordered collections
of simple roots with the same cardinality such that 
$s_{\alpha_n}\cdots s_{\alpha_1} = s_{\beta_n}\cdots s_{\beta_1}$.
Then,
\begin{align*}
r_{\alpha_n}^{\lambda_n} \cdots r_{\alpha_1}^{\lambda_1} - 
r_{\beta_n}^{\mu_n} \cdots r_{\beta_1}^{\mu_1}\in \mathscr{F}^{n-1},
\end{align*}
where, $\lambda_i = s_{\alpha_{i-1}}\cdots s_{\alpha_1}(\lambda)$,
$\mu_i = s_{\beta_{i-1}}\cdots s_{\beta_1}(\lambda)$.
\end{lemma}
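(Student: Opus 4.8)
The plan is to prove the equivalent assertion that the \emph{leading symbol} of a length-$n$ product $r_{\alpha_n}^{\lambda_n}\cdots r_{\alpha_1}^{\lambda_1}$ in the associated graded piece $\mathscr{F}^n/\mathscr{F}^{n-1}$ depends only on the Weyl group element $w=s_{\alpha_n}\cdots s_{\alpha_1}$ and the base weight $\lambda$, not on the chosen word. I would argue by induction on $n$, the cases $n\le 1$ being immediate since distinct simple roots give distinct simple reflections and $\lambda_1=\mu_1=\lambda$. The engine of the induction is a purely local \emph{braid-move invariance} statement: if $\mf{B}_2$ is obtained from $\mf{B}_1$ by replacing one consecutive rank-two subword $s_\gamma s_\delta s_\gamma\cdots$ (with $m_{\gamma,\delta}$ letters) by $s_\delta s_\gamma s_\delta\cdots$, then the two products differ by an element of $\mathscr{F}^{n-1}$. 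Because the filtration is multiplicative, it suffices to check this for the subword itself, i.e. that the two reduced $W^{\gamma,\delta}$-words for the longest element $w_{\gamma,\delta}$, based at the intermediate weight $\nu$, agree modulo $\mathscr{F}^{m-1}$; the surrounding factors then carry the difference into $\mathscr{F}^{n-1}$.

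With braid-move invariance in hand, the reduced case $\ell(w)=n$ follows from the Matsumoto--Tits word theorem (see \cite{humphreys}): any two reduced words for $w$ are linked by a chain of braid moves, each of which alters the product only within $\mathscr{F}^{n-1}$. To prove braid-move invariance itself, I would classify the intermediate weight $\nu$ by its stabilizer in $W^{\gamma,\delta}$. If $\nu$ is standard parabolic with respect to $\{\gamma,\delta\}$ (this includes the \emph{regular} case, where the stabilizer is trivial and hence generated by the empty subset), or is non-parabolic but fixed by one of $s_\gamma,s_\delta$, then the braid relation is imposed \emph{exactly} in the defining presentation of $\qhecke$, so the two words are literally equal. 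The only surviving possibility is that $\nu$ is \emph{exceptional}: non-parabolic with respect to $W^{\gamma,\delta}$ and fixed by neither reflection.

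The exceptional case is the main obstacle, and it is exactly where the conditions on $G$ do their work. Here I would first reduce to genuine rank two via the split $W$-equivariant projection of Remark~\ref{split-projection}, then invoke Corollary~\ref{orbit} to pin down $\nu$ completely: it is $2$-torsion, $\nu^2=1$, with $\langle\gamma,\nu\rangle=\langle\delta,\nu\rangle=-1$, and $m_{\gamma,\delta}\in\{4,6\}$. Since $\nu$ is then both $\gamma$- and $\delta$-exceptional, condition~(4) forces every $G$ appearing along either path to equal $1$. I would then compute both reduced-word products directly, tracing the small $W^{\gamma,\delta}$-orbit of $\nu$ and repeatedly applying the quadratic relation $r_\gamma^{s_\gamma(\mu)}r_\gamma^\mu=G_\gamma^\mu$ together with the twisted commutation $r_\gamma^\mu f-s_\gamma(f)r_\gamma^\mu=h\demazure_\gamma(f)$, whose correction term is of lower filtration. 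The expectation is that the two leading symbols coincide once all the $G$'s are $1$; any discrepancy produced while straightening has fewer than $m$ factors and is absorbed into $\mathscr{F}^{m-1}$ by the inductive hypothesis. Verifying this equality explicitly in types $B_2$ and $G_2$ is the computational heart of the argument, and is the step I expect to be most delicate.

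Finally, the non-reduced case $\ell(w)<n$ I would handle by showing each individual product already lies in $\mathscr{F}^{n-1}$, so their difference does too. Since the word is not reduced, the standard theory of reduced expressions (cf. the proof of Lemma~\ref{permute}, where the roots $\gamma_i$ fail to be distinct) produces, after braid moves applied within a reduced prefix, a word with two \emph{adjacent} equal simple reflections $\cdots s_\gamma s_\gamma\cdots$; these braid moves change the product only within $\mathscr{F}^{n-1}$ by the local invariance above, which, being local, does not require the global word to be reduced. Applying the quadratic relation to the adjacent pair replaces $r_\gamma^{s_\gamma(\mu)}r_\gamma^\mu$ by either the scalar $G_\gamma^\mu$ or by $h_0\,r_\gamma^\mu$, in each case strictly lowering the number of $r$-factors, so the entire product lands in $\mathscr{F}^{n-1}$. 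This closes the induction.
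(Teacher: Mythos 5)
Your overall architecture (local braid-move invariance plus the Matsumoto--Tits word theorem) is sound, and arguably cleaner than the paper's hands-on exchange-lemma induction, but your proof of the local statement has a genuine gap: the case analysis of the intermediate weight $\nu$ is incomplete. Between your cases there is a third possibility: the stabilizer of $\nu$ in $W^{\gamma,\delta}$ can be a \emph{non-standard} parabolic subgroup, e.g.\ $\langle s_\delta s_\gamma s_\delta\rangle$, so that $\nu$ is parabolic with respect to $W^{\gamma,\delta}$ (hence not exceptional, and not covered by Corollary \ref{orbit}) yet is neither standard parabolic nor fixed by either of $s_\gamma, s_\delta$. For such $\nu$ the braid relation is \emph{not} imposed in the presentation of $\qhecke$, and in fact it genuinely fails to hold on the nose: the paper shows that the difference of the two $m$-letter words equals
\begin{align*}
s_{\alpha_m}\cdots s_{\alpha_{t+2}}\bigl(h\demazure_{\alpha_{t+1}}(P)\bigr)\cdot
r_{\alpha_m}^{\lambda_m}\cdots r_{\alpha_{2t+2}}^{\lambda_{2t+2}},
\end{align*}
where $P$ is a product of Weyl translates of the $G$'s --- a typically nonzero element of $\mathscr{F}^{m-1}$. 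So your claim of "literal equality" is false precisely in the case that carries the content of the lemma. Establishing even the needed membership in $\mathscr{F}^{m-1}$ requires machinery you never invoke: right-multiplying by $r_{\alpha_1}^{\lambda_{-1}}\cdots r_{\alpha_t}^{\lambda_{-t}}$ to transport the word to the standard parabolic weight in its orbit, using condition (3) of section \ref{conditions} (the braid relation on $G$) together with Lemma \ref{permute} to identify the two polynomial factors $P$ and $P'$ that appear, and finally using the defining quotient by right $\lqA$-torsion to cancel the auxiliary right factor.

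This omission propagates: both your treatment of non-reduced words and your Matsumoto argument lean on braid-move invariance at \emph{arbitrary} intermediate weights, so neither survives without case (b). A secondary point: you identify the exceptional (non-parabolic, fixed-by-neither) case as the computational heart, but in the paper that case is comparatively soft --- by Corollary \ref{orbit} all relevant $G$'s equal $1$ and the neighboring orbit weights have braid relations imposed by the presentation, from which the missing relation is \emph{derived}, again by right multiplication and torsion-freeness rather than by the direct word-straightening computation you sketch; a head-on computation is not obviously feasible, since the defining relations give no way to compare the two alternating words except through already-known braid relations at other weights in the orbit.
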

\begin{proof}
We prove the assertion by induction on $n$. The cases of $n=0,1$ are trivial.

First, put $w=s_{\alpha_n}\cdots s_{\alpha_1}$. 
Suppose that $\ell(w)<n$.
By the deletion condition, there exists, $1\le i < j \le n$ with,
\begin{align*}
s_{\alpha_j}\cdots s_{\alpha_{i+1}} = s_{\alpha_{j-1}}\cdots s_{\alpha_{i}}.
\end{align*}
By induction, we may assume
\begin{align*}
r_{\alpha_j}^{\lambda_j}\cdots r^{\lambda_{i+1}}_{\alpha_{i+1}} - 
r^{\lambda_j'}_{\alpha_{j-1}}\cdots r^{\lambda'_{i+1}}_{\alpha_{i}}\in \mathscr{F}^{j-i-1},
\end{align*}
with the appropriately chosen $\lambda_k'=\lambda_k, 1\le k\le i+1, j+1\le k\le n$. Thus,
\begin{align*}
r_{\alpha_n}^{\lambda_n}\cdots r^{\lambda_{j+1}}_{\alpha_{j+1}}
\left(r_{\alpha_j}^{\lambda_j}\cdots r^{\lambda_{i+1}}_{\alpha_{i+1}} \right)
r_{\alpha_i}^{\lambda_i}\cdots r^{\lambda_1}_{\alpha_1} 
- 
r_{\alpha_n}^{\lambda'_n}\cdots r^{\lambda'_{j+1}}_{\alpha_{j+1}}
\left(r_{\alpha_{j-1}}^{\lambda'_{j}}\cdots r^{\lambda'_{i+1}}_{\alpha_{i}} \right)
r_{\alpha_i}^{\lambda'_i}\cdots r^{\lambda'_1}_{\alpha_1}
\in \mathscr{F}^{n-1}.
\end{align*}
Because $r^{\lambda'_{i+1}}_{\alpha_{i}} 
r_{\alpha_i}^{\lambda'_i}\in \mathscr{F}^{1}$, the second term is in 
$\mathscr{F}^{n-1}$, hence 
$r_{\alpha_n}^{\lambda_n} \cdots r_{\alpha_1}^{\lambda_1}\in \mathscr{F}^{n-1}$. 
The claim now follows for non-reduced expressions.

\vspace{10mm}

Now we show that the assertion is true in the case of a braid relation.
Let $\alpha, \beta\in \Pi$ be distinct, and let $W^{\alpha,\beta}$ be the 
dihedral subgroup of $W$ generated by $s_\alpha, s_\beta$. Let $m = m_{\alpha,\beta}$
be the order of $s_\alpha s_\beta$, and set 
\begin{align*}
\alpha_i &= \begin{cases}
\alpha ,& \text{ if $i$ odd}, \\
\beta ,& \text{ if $i$ even.} \end{cases} \\
\beta_i &= \begin{cases} 
\beta ,& \text{ if $i$ odd}, \\
\alpha ,& \text{ if $i$ even.} \end{cases}
\end{align*}
We will show,
\begin{align}\label{diff}
r_{\alpha_m}^{\lambda_m} \cdots r_{\alpha_1}^{\lambda_1} - 
r_{\beta_m}^{\mu_m} \cdots r_{\beta_1}^{\mu_1}\in \mathscr{F}^{m-1}.
\end{align}

First, suppose $\lambda$ is not parabolic with respect to $W^{\alpha,\beta}$. 
By analyzing the four
simply connected semisimple groups of rank 2 in lemma \ref{orbit} we see the only such 
$\lambda$ have $m_{\alpha,\beta}=4,6$.
In the case $m_{\alpha,\beta} = 4$, we may assume that $\beta$ is longer than $\alpha$ and $\langle \lambda, \beta\rangle, \langle \lambda, \alpha\rangle =\pm 1$, as $\lambda$ has order two.
Checking the four elements satisfying that requirement in the torus of the associated simply connected semisimple group, the $W^{\alpha,\beta}$-orbit of $\lambda$ must be of order exactly two, with the two elements being $\pi = \check{\beta}\otimes -1$ and $s_\alpha(\pi) = (\check{\alpha}\otimes -1)\cdot (\check{\beta}\otimes -1)$.
We find $s_\beta(\pi) = \pi$ and $s_\beta(s_\alpha(\pi)) = s_\alpha(\pi)$.
We have the following picture of the Cayley graph of the orbit:
\begin{align*}
{}^{s_\beta} \circlearrowleft \pi \xleftrightarrow{s_\alpha} s_\alpha(\pi) 
\circlearrowright {}^{s_\beta}.
\end{align*}
Both of these weights are $\beta$-exceptional, hence the braid relation holds for both $\pi, s_\alpha(\pi)$, thus the difference in question is in fact zero.

In the case $G_2$ we also assume that $\beta$ is the longer root.
Again, $\langle \lambda, \beta\rangle, \langle \lambda, \alpha\rangle =\pm 1$.
In this case the order of the $W^{\alpha,\beta}$-orbit must be exactly three, with the three elements given by $\pi = \check{\alpha}\otimes -1$, $s_\beta(\pi) = (\check{\alpha}\otimes -1)\cdot (\check{\beta}\otimes -1)$, and $s_\alpha s_\beta(\pi) = \check{\beta}\otimes -1$.
We find that $\pi$ is $s_\alpha$-invariant, and $s_\alpha s_\beta (\pi)$ is $s_\beta$ invariant, so these two weights are exceptional.
We have the following picture of the Cayley graph of this orbit:
\begin{align*}
{}^{s_\alpha} \circlearrowleft \pi \xleftrightarrow{s_\beta} s_\beta(\pi) 
\xleftrightarrow{s_\alpha} s_\alpha s_\beta(\pi) \circlearrowright {}^{s_\beta}.
\end{align*}
We will show that the braid relation for $\pi, s_\alpha s_\beta(\pi)$ implies the braid relation for $s_\beta(\pi)$, which is the only weight not fixed by one of $s_\alpha, s_\beta$.
This will also demonstrate the technique used in the general case, that the braid relation for a standard parabolic weight implies a (different) braid relation for the other weights in its $W^{\alpha,\beta}$-orbit.
With this in mind, recall that we wish to calculate the difference,
\begin{align*}
r_\beta^{\pi} \cdots r_\beta^{s_\alpha s_\beta(\pi)} r_\alpha^{s_\beta(\pi)} - 
r_\alpha^{s_\alpha s_\beta(\pi)} \cdots r_\alpha^{\pi} r_\beta^{s_\beta(\pi)}.
\end{align*}
We simply multiply this difference on the right by $r_\alpha^{s_\alpha s_\beta(\pi)}$:
\begin{align*}
r_\beta^{\pi} r_\alpha^{\pi}\cdots r_\beta^{s_\alpha s_\beta(\pi)} r_\alpha^{s_\beta(\pi)}r_\alpha^{s_\alpha s_\beta(\pi)} - 
r_\alpha^{s_\alpha s_\beta(\pi)} r_\beta^{s_\alpha s_\beta(\pi)} \cdots r_\alpha^{\pi} r_\beta^{s_\beta(\pi)}r_\alpha^{s_\alpha s_\beta(\pi)}.
\end{align*}
The first term simplifies as, $r_\alpha^{s_\beta(\pi)} r_\alpha^{s_\alpha s_\beta(\pi)} = G_\alpha^{s_\alpha s_\beta(\pi)}$. 
As $s_\alpha s_\beta(\pi)$ is $\alpha$-exceptional we have $G_\alpha^{s_\alpha s_\beta(\pi)} = 1$.
The last six elements in the product in the second term may be substituted by the braid relation for the $\beta$-exceptional weight $s_\alpha s_\beta(\pi)$:
\begin{align*}
r_\alpha^{s_\alpha s_\beta(\pi)} 
r_\beta^{s_\alpha s_\beta(\pi)} \cdots r_\alpha^{\pi}              r_\beta^{s_\beta(\pi)}r_\alpha^{s_\alpha s_\beta(\pi)} = 
r_\alpha^{s_\alpha s_\beta(\pi)}
r_\alpha^{s_\beta(\pi)} \cdots
r_\alpha^{s_\alpha s_\beta(\pi)} r_\beta^{s_\alpha s_\beta(\pi)}
\end{align*}

Notice that the first two terms of the product on the right side of the equality simplify;
\begin{align*}
r_\alpha^{s_\alpha s_\beta(\pi)}r_\alpha^{s_\beta(\pi)} = 
G_\alpha^{s_\beta(\pi)}.
\end{align*}
Again, $s_\beta(\pi)$ is $\alpha$-exceptional so $G_\alpha^{s_\beta(\pi)} = 1$.
It follows that the difference in question is equal to:
\begin{align*}
& r_\beta^{\pi} r_\alpha^{\pi}\cdots r_\beta^{s_\alpha s_\beta(\pi)} r_\alpha^{s_\beta(\pi)}r_\alpha^{s_\alpha s_\beta(\pi)} - 
r_\alpha^{s_\alpha s_\beta(\pi)} r_\beta^{s_\alpha s_\beta(\pi)} \cdots r_\alpha^{\pi} r_\beta^{s_\beta(\pi)}r_\alpha^{s_\alpha s_\beta(\pi)} \\
&= r_\beta^{\pi} r_\alpha^{\pi}\cdots r_\beta^{s_\alpha s_\beta(\pi)}
G_\alpha^{s_\alpha s_\beta(\pi)} -                                   
s_\alpha w_{\alpha,\beta}(G_\alpha^{s_\alpha s_\beta(\pi)})
r_\beta^{\pi} r_\alpha^{\pi}\cdots r_\beta^{s_\alpha s_\beta(\pi)} \\
&=
r_\beta^{\pi} r_\alpha^{\pi}\cdots r_\beta^{s_\alpha s_\beta(\pi)} - 
r_\beta^{\pi} r_\alpha^{\pi}\cdots r_\beta^{s_\alpha s_\beta(\pi)} \\
&= 0. 
\end{align*}
Multiplying again on the right by $r_\alpha^{s_\beta(\pi)}$ and noting that $r_\alpha^{s_\alpha s_\beta(\pi)}r_\alpha^{s_\beta(\pi)} = G_\alpha^{s_\beta(\pi)} = 1$, we find the braid relation for $s_\beta(\pi)$:
\begin{align*}
r_\beta^{\pi} \cdots r_\beta^{s_\alpha s_\beta(\pi)} r_\alpha^{s_\beta(\pi)} - 
r_\alpha^{s_\alpha s_\beta(\pi)} \cdots r_\alpha^{\pi} r_\beta^{s_\beta(\pi)}.
\end{align*}
This concludes the claim for exceptional weights.

We move on, and assume $\lambda$ is parabolic with respect to $W^{\alpha,\beta}$.
If the stabilizer of $\lambda$ has 1 element, or is
$W^{\alpha, \beta}$ itself, then $\lambda$ was a standard parabolic weight 
with respect to $\{\alpha,\beta\}$ and we are done, as the braid relation
shows that the difference in \eqref{diff} is zero.

Thus, assume that $\lambda$ is a parabolic weight, but not a standard parabolic weight. 
Then there is a unique
$1\le t < m$ so that $\lambda_{t+1} = s_{\alpha_t}\cdots s_{\alpha_1}(\lambda)$ is a 
standard parabolic
weight with $s_{\alpha_{t+1}}(\lambda_{t+1})=\lambda_{t+1} $. We will swap
$\alpha, \beta$ if it happens that $t \ge \frac{m}{2}$, which has the effect of
changing $t$ to $m-t-1$. From now on, we assume $t < \frac{m}{2}$.

Define $\lambda_{-i} = \lambda_{i+1}, \mu_{-i} = \mu_{i+1}$, 
and multiply the difference in \eqref{diff} on the right by
$r_{\alpha_1}^{\lambda_{-1}}\cdots r_{\alpha_t}^{\lambda_{-t}}$. The two terms that appear
 are grouped as follows:
\begin{align*}
&r_{\alpha_m}^{\lambda_m}\cdots r_{\alpha_{t+1}}^{\lambda_{t+1}}
(r_{\alpha_t}^{\lambda_t}\cdots r_{\alpha_1}^{\lambda_1}
r_{\alpha_1}^{\lambda_{-1}}\cdots r_{\alpha_t}^{\lambda_{-t}})- \\
&r_{\beta_m}^{\mu_m}\cdots r_{\beta_{m-t+1}}^{\mu_{m-t+1}}
(r_{\beta_{m-t+1}}^{\mu_{m-t+1}}\cdots r_{\beta_1}^{\mu_1}
r_{\alpha_1}^{\lambda_{-1}}\cdots r_{\alpha_t}^{\lambda_{-t}}).
\end{align*}
As $\beta_1\not = \alpha_1$, the last $m$ entries of the second term
alternate between $\alpha$ and $\beta$, and start at the parabolic weight 
$\lambda_{-t} = \lambda_{t+1}$. Thus, they may be switched using the braid relation 
to the following:
\begin{align*}
&r_{\alpha_m}^{\lambda_m}\cdots r_{\alpha_{t+1}}^{\lambda_{t+1}}
(r_{\alpha_t}^{\lambda_t}\cdots r_{\alpha_1}^{\lambda_1}
r_{\alpha_1}^{\lambda_{-1}}\cdots r_{\alpha_t}^{\lambda_{-t}})- \\
&r_{\beta_m}^{\mu_m}\cdots r_{\beta_{m-t+1}}^{\mu_{m-t+1}}
(r_{\beta_{m-t+1}}^{\mu_{-(m-t+1)}}\cdots r_{\beta_m}^{\mu_{-m}}
r_{\alpha_m}^{\lambda_{m}}\cdots r_{\alpha_{t+1}}^{\lambda_{t+1}}).
\end{align*}
We combine the last $2t$ entries in the first term and the first $2t$ entries
in the second term to simplify this expression,
\begin{align*}
&r_{\alpha_m}^{\lambda_m}\cdots r_{\alpha_{t+1}}^{\lambda_{t+1}} \cdot P - \\
&P' \cdot r_{\alpha_m}^{\lambda_m}\cdots r_{\alpha_{t+1}}^{\lambda_{t+1}}
\end{align*}
where,
\begin{align*}
P=& \prod_{i=1}^t s_{\alpha_t}\cdots s_{\alpha_{i+1}} 
(G_{\alpha_i}^{\lambda_{-i}}), \\
P'=& \prod_{j=m-t+1}^{m} s_{\beta_m}\cdots s_{\beta_{j+1}}
(G_{\beta_j}^{\mu_{-j}}).
\end{align*}

Using the commutativity relation between $r_{\alpha}^\lambda$ and elements of $\lqA$
we find that the above expression is equal to,
\begin{align*}
&s_{\alpha_m}\cdots s_{\alpha_{t+1}}(P) \cdot 
r_{\alpha_m}^{\lambda_m}\cdots r_{\alpha_{t+1}}^{\lambda_{t+1}} + \\
&s_{\alpha_m}\cdots s_{\alpha_{t+2}}(h\demazure_{\alpha_{t+1}}(P)) \cdot 
r_{\alpha_m}^{\lambda_m}\cdots r_{\alpha_{t+2}}^{\lambda_{t+2}} - \\
&P'\cdot r_{\alpha_m}^{\lambda_m}\cdots r_{\alpha_{t+1}}^{\lambda_{t+1}} \cdot
\end{align*}

We claim that 
$s_{\alpha_m}\cdots s_{\alpha_{t+1}}(P) = P'$.
We can use the permutation $p$ from
lemma \ref{permute} to show that the $i$-th term in the product expression
for $s_{\alpha_{m}}\cdots s_{\alpha_{t+1}}(P)$ is the same as the $j$-th term in the
expression for $P'$, where $j=p(i)$. In fact, let $j=m-i+1 = p(i)$. 
Then the corresponding terms are exactly,
\begin{align*}
s_{\alpha_m}\cdots s_{\alpha_{i+1}} G_{\alpha_i}^{\lambda_{-i}}, \\
s_{\beta_m}\cdots s_{\beta_{j+1}}G_{\beta_j}^{\mu_{-j}}.
\end{align*}
The \emph{braid relation} for $G$ axiomatizes the above equality.

Now, $\mu_{-j} = w_{\alpha,\beta}(\lambda_{-i})$ and $\beta_j = w_{\alpha,\beta}(\alpha_i)$.
All in all, the difference in \eqref{diff} simplifies to,
\begin{align*}
s_{\alpha_m}\cdots s_{\alpha_{t+2}}(h\demazure_{\alpha_{t+1}}(P)) \cdot 
r_{\alpha_m}^{\lambda_m}\cdots r_{\alpha_{t+2}}^{\lambda_{t+2}}.
\end{align*}

The above expression has $m-t-1$ terms of the form $r_\alpha^\lambda$, and we can replace $t$ of them
(using the assumption that $t < \frac{m}{2}$)
after we multiply on the right by $r_{\alpha_t}^{\lambda_{-t}}\cdots r_{\alpha_1}^{\lambda_{-1}}$.

To summarize, let's define the following non-zero element of $\qA_\lambda$:
\begin{align*}
R = r_{\alpha_1}^{\lambda_{-1}}\cdots r_{\alpha_t}^{\lambda_{-t}}
r_{\alpha_t}^{\lambda_{t-1}}\cdots r_{\alpha_1}^{\lambda_1}.
\end{align*}
We have shown that the following relation holds in $\qhecke$:
\begin{align*}
&\left(r_{\alpha_m}^{\lambda_m} \cdots r_{\alpha_1}^{\lambda_1} - 
r_{\beta_m}^{\mu_m} \cdots r_{\beta_1}^{\mu_1}\right) R
\\ =& 
s_{\alpha_m}\cdots s_{\alpha_{t+2}}(h\demazure_{\alpha_{t+1}}(P)) \cdot 
r_{\alpha_m}^{\lambda_m}\cdots r_{\alpha_{2t+2}}^{\lambda_{2t+2}} \cdot R.
\end{align*}
The relation that $\qhecke$ has no right $\lqA$-torsion implies:
\begin{align*}
&r_{\alpha_m}^{\lambda_m} \cdots r_{\alpha_1}^{\lambda_1} - 
r_{\beta_m}^{\mu_m} \cdots r_{\beta_1}^{\mu_1} 
\\ =& 
s_{\alpha_m}\cdots s_{\alpha_{t+2}}(h\demazure_{\alpha_{t+1}}(P)) \cdot 
r_{\alpha_m}^{\lambda_m}\cdots r_{\alpha_{2t+2}}^{\lambda_{2t+2}},
\end{align*}
where again,
\begin{align*}
P=& \prod_{i=1}^t s_{\alpha_t}\cdots s_{\alpha_{i+1}} 
(G_{\alpha_i}^{\lambda_{-i}}).
\end{align*}

\vspace{10mm}

Finally, we show the assertion for reduced expressions. Let $w\in W$ with 
$\ell(w)=n$, and take two expressions 
$w = s_{\alpha_n}\cdots s_{\alpha_1} = s_{\beta_n}\cdots s_{\beta_1}$ of minimal length.
We show:
\begin{align*}
r_{\alpha_n}^{\lambda_n} \cdots r_{\alpha_1}^{\lambda_1} - 
r_{\beta_n}^{\mu_n} \cdots r_{\beta_1}^{\mu_1}\in \mathscr{F}^{n-1},
\end{align*}
by reducing to a smaller length case, or by using a braid relation. We need to 
apply the following lemma, which follows directly from lemma \ref{permute}, possibly many times.
\begin{lemma}
Let $u\in W$ with $\ell(u)=m$ and consider two reduced expressions 
$u = s_{\delta_m}\cdots s_{\delta_1} = s_{\gamma_m}\cdots s_{\gamma_1}$ in $W$.
Then there is a unique $1\le i_0 \le m$ with
\begin{align*}
\delta_1 =& s_{\gamma_1}\cdots s_{\gamma_{i_0-1}}(\gamma_{i_0}), \\
s_{\gamma_{i_0-1}}\cdots s_{\gamma_1} s_{\delta_1} =&
s_{\gamma_{i_0}} \cdots s_{\gamma_1}.
\end{align*}
\end{lemma}

Applying the lemma directly to the two expressions we have for $w$, we see if
$i_0< n$, then by induction we have
\begin{align*}
r_{\beta_{i_0}}\cdots r_{\beta_1} - r_{\beta_{i_0-1}}\cdots r_{\beta_1}r_{\alpha_1}
\in \mathscr{F}^{i_0-1}.
\end{align*}
Though we drop the weights $\mu_i, \lambda_i$ the reader may check this does no harm.

By the inductive hypothesis, 
\begin{align*}
r_{\alpha_n}\cdots r_{\alpha_2} - 
r_{\beta_n}\cdots r_{\beta_{i_0+1}}r_{\beta_{i_0-1}}\cdots r_{\beta_1}\in \mathscr{F}^{n-2}.
\end{align*}
Thus,
\begin{align*} 
&
\left(
r_{\alpha_n}\cdots r_{\alpha_1} - 
r_{\beta_n}\cdots r_{\beta_{i_0+1}}r_{\beta_{i_0-1}}\cdots r_{\beta_1} r_{\alpha_1}
\right) + \\
& 
\left(
r_{\beta_n}\cdots r_{\beta_{i_0+1}}r_{\beta_{i_0-1}}\cdots r_{\beta_1} r_{\alpha_1} -
r_{\beta_{n}}\cdots r_{\beta_1} 
\right)
\in \mathscr{F}^{n-1}.
\end{align*}

We assume $i_0=n$, or
\begin{align*}
\alpha_1 &= s_{\beta_1}\cdots s_{\beta_{n-1}}(\beta_n), \\
s_{\beta_{n-1}}\cdots s_{\beta_1}s_{\alpha_1} &=
s_{\beta_n}\cdots s_{\beta_1} = w.
\end{align*}
Similar to the argument above, we have by induction,
\begin{align*}
r_{\beta_{n-1}}\cdots r_{\beta_1} - r_{\alpha_n}\cdots r_{\alpha_2}\in 
\mathscr{F}^{n-2}.
\end{align*}
Thus, the following two assertions are equivalent,
\begin{align*}
&r_{\beta_n}\cdots r_{\beta_1} - r_{\alpha_n}\cdots r_{\alpha_1}\in \mathscr{F}^{n-1}, \\
&r_{\beta_n}\cdots r_{\beta_1} - r_{\beta_{n-1}}\cdots r_{\beta_1}r_{\alpha_1}\in 
\mathscr{F}^{n-1}.
\end{align*}

We now apply the lemma above to the second expression, finding an $i_0$ with,
\begin{align*}
\beta_1 = s_{\alpha_1}s_{\beta_1}\cdots s_{\beta_{i_0-2}}(\beta_{i_0-1}).
\end{align*}
Again, either $i_0 <n$ in which case we apply the induction to show the claim,
or we show the following two assertions are equivalent,
\begin{align*}
&r_{\beta_n}\cdots r_{\beta_1} - r_{\beta_{n-1}}\cdots r_{\beta_1}r_{\alpha_1}\in 
\mathscr{F}^{n-1} \\
&r_{\beta_{n-1}}\cdots r_{\beta_1}r_{\alpha_1} - 
r_{\beta_{n-2}}\cdots r_{\beta_1}r_{\alpha_1} r_{\beta_1}\in 
\mathscr{F}^{n-1}.
\end{align*}
Using the same trick we show either the second claim or the equivalence of the following
two assertions,
\begin{align*}
&r_{\beta_{n-1}}\cdots r_{\beta_1}r_{\alpha_1} - 
r_{\beta_{n-2}}\cdots r_{\beta_1}r_{\alpha_1} r_{\beta_1}\in 
\mathscr{F}^{n-1}, \\
&r_{\beta_{n-3}}\cdots r_{\beta_1}r_{\alpha_1} r_{\beta_1}r_{\alpha_1}- 
r_{\beta_{n-2}}\cdots r_{\beta_1}r_{\alpha_1} r_{\beta_1}\in 
\mathscr{F}^{n-1}.
\end{align*}
At this point, if $m_{\alpha,\beta} = 3$ we are done due to the above
proof for the braid relation. If $m_{\alpha,\beta}$ is larger, we keep applying this algorithm
to eventually find a braid relation.

This finishes the proof of the lemma.
\end{proof}

\begin{corollary} \label{structure}
Let $B$ be a set of reduced expressions $r_{\alpha_n}\cdots r^{\lambda}_{\alpha_n}$ 
so that every $w\in W$ is represented exactly once. Then $B$ generates
$\qhecke1_\lambda$ as a right $\qA_\lambda$-module.
\end{corollary}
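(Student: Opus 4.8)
The plan is to combine two reductions: first pass from an arbitrary element of $\qhecke 1_\lambda$ to a right $\qA_\lambda$-combination of pure products of the generators $r_\alpha^\bullet$, and then use the preceding lemma to rewrite each such product in terms of the chosen reduced monomials in $B$, working up the filtration $(\mathscr{F}^n)$. For the first reduction, recall that $\qhecke$ is generated by $\lqA$ together with the arrows $r_\alpha^\lambda$, so every element of $\qhecke 1_\lambda$ is a sum of alternating products of elements of $\qA$ and generators, terminating in $1_\lambda$. Using the commutativity relation $r_\alpha^\lambda f = s_\alpha(f) r_\alpha^\lambda + c$, where the correction $c$ equals $h\demazure_\alpha(f)$ when $s_\alpha(\lambda)=\lambda$ and $0$ otherwise, I can move each polynomial coefficient to the right of every generator it precedes. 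The leading term of each such move preserves the number of $r$-factors while the correction term $c\in\qA$ strictly lowers it; hence by induction on the number of generators, every element of $\qhecke 1_\lambda$ is a right $\qA_\lambda$-linear combination of composable monomials $r_{\alpha_n}^{\lambda_n}\cdots r_{\alpha_1}^{\lambda_1}$ with $\lambda_1=\lambda$ and $\lambda_i = s_{\alpha_{i-1}}\cdots s_{\alpha_1}(\lambda)$. It therefore suffices to show that each such monomial lies in the right $\qA_\lambda$-span of $B$.

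I would prove this last claim by strong induction on the length $n$ of the monomial $M = r_{\alpha_n}^{\lambda_n}\cdots r_{\alpha_1}^{\lambda_1}$, equivalently on its filtration degree. For $n=0$ the monomial is $1_\lambda$, which is the representative in $B$ of $w=e$. For $n\ge 1$ put $w = s_{\alpha_n}\cdots s_{\alpha_1}$. If this expression is not reduced, i.e. $\ell(w)<n$, then the first part of the proof of the preceding lemma shows directly that $M\in\mathscr{F}^{n-1}$; since $\mathscr{F}^{n-1}1_\lambda$ is spanned over right $\qA_\lambda$ by composable monomials of length at most $n-1$ starting at $\lambda$, the inductive hypothesis places $M$ in the right $\qA_\lambda$-span of $B$. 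If instead $\ell(w)=n$, let $b_w = r_{\beta_n}^{\mu_n}\cdots r_{\beta_1}^{\mu_1}\in B$ be the chosen reduced monomial representing $w$. Both $M$ and $b_w$ arise from reduced expressions of the same element, so the preceding lemma gives $M - b_w\in\mathscr{F}^{n-1}$; by the inductive hypothesis $M-b_w$ lies in the right $\qA_\lambda$-span of $B$, and adding $b_w\in B$ shows the same for $M$. This completes the induction, and with the first reduction it proves the corollary.

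The genuinely substantive input is the preceding lemma, which already disposes of both the collapse of non-reduced words into lower filtration and the comparison of two reduced words for the same Weyl group element; granted it, the corollary is a bookkeeping argument. The only point requiring care is the first reduction: one must check that pushing polynomial coefficients to the right never increases the number of $r$-factors, so that the induction on filtration degree is well-founded — this is guaranteed because the correction term $h\demazure_\alpha(f)$ carries no generator. I expect no further obstacle; in particular the no-torsion quotient defining $\qhecke$ plays no role here, since I only need an upper bound, namely a spanning set, rather than linear independence.
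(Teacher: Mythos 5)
Your proof is correct and is exactly the derivation the paper has in mind: the corollary is stated without proof as an immediate consequence of the preceding lemma, and your two reductions (pushing polynomial coefficients to the right via the commutation relation, then inducting on filtration degree using the lemma to replace any word by the chosen reduced representative modulo $\mathscr{F}^{n-1}$) are precisely the omitted bookkeeping. Your closing observations — that the correction terms carry no $r$-factors, and that the torsion-free quotient is irrelevant for a spanning statement — are also accurate.
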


Let $gr\qhecke$ be the graded algebra associated to the filtration $(\mathscr{F}^n)$.
We wish to describe the structure of $gr \qhecke$.

Let ${}^0\mathscr{H}^f$ be the finite nil-Hecke algebra. This is the algebra 
with generators $r_{\alpha}, \alpha\in \Pi$, satisfying:
\begin{align*}
r_{\alpha}^2 &= 0, \\
\cdots r_{\beta} r_{\alpha} &= \cdots r_{{\alpha}}r_{\beta},
\text{with $m_{\alpha, \beta}$ terms.}
\end{align*}

We form the wreath product algebra
\begin{align*}
\lqA \wr {}^0\mathscr{H}^f,
\end{align*}
which as a $k$ vector space is given by the tensor product, 
$\lqA\otimes_k {}^0\mathscr{H}^f$. We give the multiplication by setting,
\begin{align*}
1\otimes r_{s_\alpha} \cdot f\otimes 1 = s_\alpha(f)\otimes r_{s_{\alpha}}.
\end{align*}

There is a natural surjective morphism
\begin{align*}
\lqA\wr {}^0\mathscr{H}^f\to gr\qhecke.
\end{align*}
We say that $\qhecke$ has the PBW property if this morphism is an isomorphism.

\begin{theorem}\label{PBW}
The following assertions hold:
\begin{itemize}
\item
$\qhecke$ satisfies the PBW property.
\item
For every $\lambda\in \qT $,
$\qhecke1_\lambda$ is a free right $\qA _\lambda $-module with basis $B$.
\end{itemize}
\end{theorem}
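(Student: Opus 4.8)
The two assertions are two faces of one statement. By Corollary \ref{structure} the set $B$ already spans $\qhecke 1_\lambda$ as a right $\qA_\lambda$-module, and the preceding lemma shows that any two reduced words for a fixed $w\in W$ agree modulo $\mathscr{F}^{n-1}$, so the images $\{r_w 1_\lambda\}_{w\in W}$ are well defined in $gr\,\qhecke$ and span it over $\lqA$. Thus both bullets reduce to a single independence statement: the classes $r_w 1_\lambda$ are right $\qA_\lambda$-linearly independent. This is exactly the injectivity of the stated surjection $\lqA\wr {}^0\mathscr{H}^f\to gr\,\qhecke$, since the nil-Hecke algebra ${}^0\mathscr{H}^f$ is itself free with $W$-indexed basis. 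The plan is therefore to produce a representation in which the operators attached to the $r_w 1_\lambda$ are manifestly independent.

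First I would construct the \emph{polynomial representation} of $\qhecke$ on $\lqA=\bigoplus_{\lambda}\qA_\lambda$, with $f\in\qA_\lambda$ acting by multiplication on the $1_\lambda$-component and each $r_\alpha^\lambda$ acting by an $s_\alpha$-twisted operator $R_\alpha$ defined weight by weight. On a weight with $s_\alpha(\lambda)=\lambda$ I set $R_\alpha=h\demazure_\alpha$; the twisted Leibniz rule $h\demazure_\alpha(fv)=h\demazure_\alpha(f)\,v+s_\alpha(f)\,h\demazure_\alpha(v)$ then reproduces the commutation relation, and the identity $(h\demazure_\alpha)^2=h_0\,(h\demazure_\alpha)$ reproduces the quadratic relation. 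On a pair $\{\lambda,s_\alpha(\lambda)\}$ with $s_\alpha(\lambda)\neq\lambda$ I let $R_\alpha$ be the $s_\alpha$-semilinear map determined by $R_\alpha(1_\lambda)=1_{s_\alpha(\lambda)}$ and $R_\alpha(1_{s_\alpha(\lambda)})=G_\alpha^\lambda$; the \emph{associative relation} $s_\alpha(G_\alpha^\lambda)=G_\alpha^{s_\alpha(\lambda)}$ is exactly what makes $R_\alpha^2$ act as multiplication by $G_\alpha^\lambda$ on both members of the pair, matching $r_\alpha^{s_\alpha(\lambda)}r_\alpha^\lambda=G_\alpha^\lambda$, while the clean twisted commutation $R_\alpha(fv)=s_\alpha(f)R_\alpha(v)$ is automatic.

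The substantial step is checking that these operators satisfy the length-$m_{\alpha,\beta}$ braid relations. I would reduce this to a rank-two computation in $W^{\alpha,\beta}$ exactly parallel to the braid-relation analysis of the preceding lemma, using the \emph{braid relation} on $G$, namely $w_{\alpha,\beta}s_\alpha(G_\alpha^\lambda)=G_{w_{\alpha,\beta}s_\alpha(\alpha)}^{w_{\alpha,\beta}s_\alpha(\lambda)}$, together with the $\alpha$-exceptional normalizations $G_\alpha^\lambda=1$; at leading order the braid identity for the $R$'s collapses onto the classical braid relation for the Demazure operators $\demazure_\alpha$, which holds as recorded earlier. This verification---ensuring the $G$-twists assemble consistently around each rank-two orbit, including the exceptional $B_2$ and $G_2$ orbits---is where essentially all the work lies, and it is the step I expect to be the main obstacle.

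Finally I would extract independence. Extending scalars to the fraction fields, the operator attached to a reduced word for $w$ has top term (in the filtration by number of $R$'s) a nonzero $ff(\qA)$-multiple of the $w$-twist composed with the divided-difference operator $\demazure_w$. Since the Pittie-Steinberg remark gives that $\{\demazure_w\mid w\in W\}$ is a basis of $\End_{ff(\qA)^W}(ff(\qA))$, these top terms are $ff(\qA)$-linearly independent, so no nontrivial right $\qA_\lambda$-combination $\sum_w r_w 1_\lambda f_w$ can act as zero. Hence the $r_w 1_\lambda$ are independent in $\qhecke 1_\lambda$: the module is free on $B$, and the graded map $\lqA\wr {}^0\mathscr{H}^f\to gr\,\qhecke$ is an isomorphism, establishing both assertions of Theorem \ref{PBW}.
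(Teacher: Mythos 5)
Your overall strategy coincides with the paper's: build a faithful polynomial representation of $\qhecke$ on $\lqA$ in which $r_\alpha^\lambda$ acts as $h\demazure_\alpha 1_\lambda$ when $s_\alpha(\lambda)=\lambda$ and as an $s_\alpha$-twisted multiplication operator otherwise, then deduce independence of the reduced-word elements from the structure of $\End_{ff(\qA)^W}(ff(\qA))$ (wreath product with $W$, equivalently Pittie--Steinberg). But there is a genuine gap exactly at the point you flag as ``the main obstacle,'' and it is not merely a long verification: your operator $R_\alpha$ is not well defined as stated. The rule ``$R_\alpha(1_\lambda)=1_{s_\alpha(\lambda)}$ and $R_\alpha(1_{s_\alpha(\lambda)})=G_\alpha^\lambda$'' treats the two weights of an $s_\alpha$-pair asymmetrically, so it requires choosing a distinguished member of every pair $\{\mu,s_\alpha(\mu)\}$; applying your own rule with the roles of $\lambda$ and $s_\alpha(\lambda)$ exchanged yields the contradictory assignment $R_\alpha(1_{s_\alpha(\lambda)})=1_\lambda$. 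Equivalently, you have implicitly chosen a factorization $G_\alpha^\lambda = F_\alpha^\lambda\cdot s_\alpha(F_\alpha^{s_\alpha(\lambda)})$ with one factor equal to $1$, and the braid relations constrain how these choices interact across an entire $W$-orbit and across different pairs of simple roots.

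The paper isolates precisely this issue as Lemma \ref{splitting} (the splitting family $F$), and its proof is the technical core of Theorem \ref{PBW}: one must check consistency of the choices under the maps $w_{\alpha,\beta}s_\alpha$, separately for $m_{\alpha,\beta}$ odd and even, and the even case forces a case analysis in rank $3$ (the $m_{\alpha,\beta}=2$ configurations, and explicit computations in the simply connected $B_3$ and $C_3$ tori), using in an essential way the normalization $G_\alpha^\lambda=1$ at exceptional weights and the simple connectedness hypothesis. Once a consistent splitting exists, the braid relations in the representation do reduce, as you suggest, to the braid relations for the operators $\demazure_\alpha$ (via $\demazure_{\alpha_m}=(w_\ell s_\alpha)\demazure_{\alpha_1}(w_\ell s_\alpha)^{-1}$), and your concluding independence argument is sound in spirit, matching the paper's appeal to linear independence of the image of $B$ in $(\lqA\wr W)1_\lambda$. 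So your proposal is a correct skeleton, but the step you defer --- producing a globally consistent choice making the twisted operators satisfy the braid relations --- is the actual content of the theorem's proof, and without the splitting lemma (or an equivalent device) the representation you construct does not exist.
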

\begin{proof}
The first two assertions are equivalent thanks to the generating family $B$ mentioned 
in the above corollary.

\begin{lemma}\label{splitting}
Given a family $G$ satisfying the conditions of section \ref{conditions}, there exists
a splitting family $F = (F_\alpha^\lambda)$, $F_\alpha\in \iA_\lambda$, which satisfy the following conditions:
\begin{enumerate}
\item
One of $F_\alpha^\lambda$ or $F_\alpha^{s_\alpha(\lambda)}$ is equal to 1.
\item
$F_\alpha^\lambda\cdot s_\alpha(F_\alpha^{s_\alpha(\lambda)}) = G_\alpha^\lambda$.
\end{enumerate}
\end{lemma}

\begin{remark}
This lemma takes the place of the splitting $Q_{i,j}(u,u') = P_{i,j}(u,u')P_{j,i}(u',u)$ in \cite[Section 3.2.3]{rouquier-qha}.
\end{remark}
\begin{proof}
Fix $\lambda\in \iT,\alpha,\beta\in \Pi$ distinct. 
If $s_\alpha(\lambda) = \lambda$, we put $F_\alpha^\lambda = F_\alpha^{s_\alpha(\lambda)}=1$.
Note, in this case, $G_\alpha^\lambda=1$.
We see that $w_{\alpha,\beta}s_\alpha(\lambda) = w_{\alpha,\beta}(\lambda)$, and because $s_{w_{\alpha,\beta}s_\alpha(\alpha)}w_{\alpha,\beta} = w_{\alpha,\beta}s_\alpha$, we have  $F_{w_{\alpha,\beta}s_\alpha(\alpha)}^{w_{\alpha,\beta}s_\alpha(\lambda)} = 
F_{w_{\alpha,\beta}s_\alpha(\alpha)}^{w_{\alpha,\beta}(\lambda)}$, so this choice is consistent with the braid relation.

Assume $s_\alpha(\lambda)\not=\lambda$, and set $F_\alpha^\lambda = G_\alpha^\lambda, F_\alpha^{s_\alpha(\lambda)}=1$.
Consider the set,
\begin{align}\label{four-weights}
\{\lambda, s_\alpha(\lambda), w_{\alpha,\beta}s_\alpha(\lambda), w_{\alpha,\beta}(\lambda)\}.
\end{align}
As $s_\alpha(\lambda)\not=\lambda$, we have $w_{\alpha,\beta}s_\alpha(\lambda)\not= w_{\alpha,\beta}(\lambda)$.
In accordance with the braid relations, we set
\begin{align}\label{Fdefine}
F_{w_{\alpha,\beta}s_\alpha(\alpha)}^{w_{\alpha,\beta}s_\alpha(\lambda)} 
:=&
w_{\alpha,\beta}s_\alpha(F_\alpha^\lambda) = w_{\alpha,\beta}s_\alpha(G_\alpha^\lambda),
\\ \nonumber
F_{w_{\alpha,\beta}s_\alpha(\alpha)}^{w_{\alpha,\beta}(\lambda)}
:=&
w_{\alpha,\beta}s_\alpha(F_\alpha^{s_\alpha(\lambda)}) = 1.
\end{align}
If $m_{\alpha,\beta}$ is odd, then $w_{\alpha,\beta}s_\alpha(\beta) = \beta$  and the four pairs
\begin{align}\label{four-pairs}
\{
(\lambda,\alpha), (s_\alpha(\lambda),\alpha), 
(w_{\alpha,\beta}(\lambda),w_{\alpha,\beta}s_\alpha(\alpha)),
(w_{\alpha,\beta}s_\alpha(\lambda),w_{\alpha,\beta}s_\alpha(\alpha))
\},
\end{align}
are distinct, so we have not defined any element of $F$ twice.
If $m_{\alpha,\beta}$ is even and $\lambda = w_{\alpha,\beta}s_\alpha(\lambda)$, then the two sides of \eqref{Fdefine} are already equal by the braid relation for $G_\alpha^\lambda = F_\alpha^\lambda$.
Thus, we have defined the two elements, $F_\alpha^\lambda, F_\alpha^{s_\alpha(\lambda)}$ twice, but with the same values each time.
If $m_{\alpha,\beta}$ is even and $\lambda = w_{\alpha,\beta}(\lambda)$, then because $w_{\alpha,\beta}$ has even length, it is not a reflection, thus $\lambda$ is not a parabolic weight.
This means that $\lambda$ is $\alpha$-exceptional so all four values in \eqref{Fdefine} are 1.

Now, let $\gamma\in \Pi$ be distinct from $\alpha,\beta$ and define 
$F_{w_{\alpha,\gamma}s_\alpha(\alpha)}^{w_{\alpha,\gamma}s_\alpha(\lambda)},
F_{w_{\alpha,\gamma}s_\alpha(\alpha)}^{w_{\alpha,\gamma}(\lambda)}$ as above.
If either of $m_{\alpha,\beta}$ or $m_{\alpha,\gamma}$ are odd, then the values of the $F$-terms are well defined.
Assuming that $m_{\alpha,\beta}, m_{\alpha,\gamma}$ are both even we see that if $w_{\alpha,\gamma}(\lambda) = w_{\alpha,\beta}(\lambda$ then as $F_\alpha^{s_\alpha(\lambda)} =1$, we indeed have $w_{\alpha,\beta}s_\alpha(F_\alpha^{s_\alpha(\lambda)}) = w_{\alpha,\gamma}s_\alpha(F_\alpha^{s_\alpha(\lambda)})$.

In case $w_{\alpha,\beta}s_\alpha(\lambda) = w_{\alpha,\gamma}s_\alpha(\lambda)$ we have the braid relation:
\begin{align*}
w_{\alpha,\beta}s_\alpha(G_\alpha^\lambda) = 
G_\alpha^{w_{\alpha,\beta}s_\alpha(\lambda)} =
G_\alpha^{w_{\alpha,\gamma}s_\alpha(\lambda)} =
w_{\alpha,\gamma}s_\alpha(G_\alpha^\lambda).
\end{align*}
Thus, to show that no contradiction forms from these choices it is enough to consider the case
\begin{align}
w_{\alpha,\gamma}(\lambda) =& w_{\alpha,\beta}s_\alpha(\lambda),\\
w_{\alpha,\gamma}s_\alpha(\alpha) =& w_{\alpha,\beta}s_\alpha(\alpha) = \alpha.
\end{align}
In the case $m_{\alpha,\beta} = m_{\alpha,\gamma} = 2$ we find that if $s_\alpha s_\gamma(\lambda) = s_\beta(\lambda)$ then $s_{\alpha}s_\beta s_\gamma(\lambda) = \lambda$.
As $s_\gamma(\omega_\alpha) = s_\beta(\omega_\alpha) = \omega_\alpha$ we find,
\begin{align*}
\omega_\alpha(\lambda) =& \omega_\alpha(s_\alpha s_\beta s_\gamma(\lambda)) \\
=& \omega_\alpha(\lambda)^{-1}.
\end{align*}
It follows that $\omega_\lambda(\lambda) = \pm 1$.
As $m_{\alpha,\beta} = m_{\alpha,\gamma}=2$ this shows that $\lambda$ is in fact $s_\alpha$-invariant, i.e. this case never happens.

We are left to consider the rank 3 root systems with the following cases
$m_{\alpha,\beta}=2, m_{\alpha,\gamma}=4,6$ and $m_{\alpha,\beta}=4,6$, $m_{\alpha,\gamma}=2$.
We must show that $F_\alpha^{w_{\alpha,\gamma}(\lambda)} = F_\alpha^{w_{\alpha,\beta}s_\alpha(\lambda)}$, which by definition means we must show that,
\begin{align*}
w_{\alpha,\gamma}s_\alpha(F_\alpha^{s_\alpha(\lambda)}) = w_{\alpha,\beta}s_\alpha(F_\alpha^{\lambda}).
\end{align*}
As we have already defined $F_\alpha^{s_\alpha(\lambda)} = 1$, we must show that $F_\alpha^{\lambda} = G_\alpha^\lambda = 1$. 

When $m_{\beta,\gamma} = 2$, we see $\omega_\gamma$ is both $s_\alpha$ and $s_\beta$ invariant.
Thus $s_\gamma s_\alpha(\lambda) = w_{\alpha,\beta} s_\alpha(\lambda)$ implies that $\omega_\gamma(\lambda)^{-1} = \omega_\gamma(\lambda)$, and hence $s_\gamma(\lambda) = \lambda$.
From this we deduce that $w_{\alpha,\beta}(\lambda) = \lambda$. 
As the length of $w_{\alpha,\beta}$ is even and $s_\alpha(\lambda)\not=\lambda$ we deduce that $\lambda$ is $\alpha$-exceptional and so $G_\alpha^\lambda=1$ as we desired.

The other cases arise from the simply connected root datum associated with $B_3$ and $C_3$.
Consider the simply connected root datum associated to $B_3$. 
Let $\Pi=\{\alpha,\beta,\gamma\}$, where $\alpha$ is the short root, $m_{\alpha,\beta}=4$, and $m_{\alpha,\gamma}=2$. 
By an explicit calculation with the element $\lambda = (x,y,z)\in (k^*)^3$ corresponding to $(\check{\alpha}\otimes x)\cdot (\check{\beta}\otimes y)\cdot (\check{\gamma}\otimes z)\in \iT\otimes k^*$, we find that the only elements satisfying $s_\gamma s_\alpha(\lambda) = s_\beta s_\alpha s_\beta(\lambda)$ and $s_\alpha(\lambda)\not=\lambda$ are of the form $(i,1,-1)$ where $i$ is a square root of $(-1)$. 
In that case the Cayley graph of the action of $s_\alpha, s_\beta, s_\gamma$ looks like,
\begin{align*}
{}^{s_\beta , s_\gamma} \circlearrowleft
(i,1,-1) \xleftrightarrow{s_\alpha} (-i,1,-1) \circlearrowright
{}^{s_\beta, s_\gamma}.
\end{align*}
As $m_{\alpha,\beta} = 4$ it is clear that $\lambda$ is $\alpha$-exceptional and so $G_\alpha^\lambda = 1$ as desired.

A similar calculation for the simply connected root datum associated to $C_3$ shows that every weight $\lambda$ with $w_{\alpha,\gamma}(\lambda) = w_{\alpha,\beta}s_\alpha(\lambda)$ are in fact $s_\alpha$-invariant.

This shows that we may define $F_\alpha^\lambda$ consistently.
\end{proof}

Now take a splitting family $F_\alpha^\lambda\in \qA _\lambda$ 
for $G$.

For 
$s_\alpha(\lambda)=\lambda$ we let $r_\alpha^\lambda$ act as
$h\demazure_\alpha 1_\lambda$. 
Otherwise we let $r^\lambda_\alpha$ act as $ s_\alpha F_\alpha^\lambda 1_\lambda$.
To show this representation is well defined we only need to check the relations.

The only difficult relation is the braid relation in the case where $\lambda$
is a parabolic weight with respect to $W^{\alpha,\beta}$, but is fixed 
by only one of the weights. 

For this case, suppose $\lambda$ is $s_\alpha$ invariant and not $s_\beta$ invariant.
We set 
\begin{align*}
\alpha_i =& \begin{cases}  \alpha, \text{ if $i$ odd}, \\ \beta, \text{ if $i$ even}\end{cases} \\
\beta_i =& \begin{cases}  \alpha, \text{ if $i$ even}, \\ \beta, \text{ if $i$ odd}\end{cases}.
\end{align*}
The relevant relation we must show is equivalent to
\begin{align*}
s_{\alpha_m} s_{\alpha_{m-1}}\cdots s_{\alpha_2}\demazure_{\alpha_1} = 
\demazure_{\beta_m} s_{\beta_{m-1}}\cdots s_{\beta_1}.
\end{align*}

If we consider $\demazure_\alpha$ as given by the fraction, $\dfrac{1-s_{\alpha}}{1-e^{-\alpha}}$,
then the relation
\begin{align*}
\demazure_{\alpha_m} = (w_\ell s_\alpha) \demazure_{\alpha_1} (w_\ell s_\alpha)^{-1}
\end{align*}
makes the desired relation above obvious.

The above morphism defines a faithful representation of $\qhecke$
on $\lqA$.

The image of the set $B\subset \qhecke1_\lambda$ gets mapped to
$(\lqA \wr W)1_\lambda$, and is linearly independent over 
$1_\lambda\lqA 1_\lambda$, just as in \cite[Proposition 3.8]{rouquier-qha}, with $k[X_1,\cdots X_n]$ replaced by $\iA$.
\end{proof}

\subsection{Isomorphism class of $\qhecke$} 
The main result of this section shows that the isomorphism class of $\qhecke$ is invariant under multiplying the data $G$ by invertible functions which also satisfy braid and reflexive relations.

\begin{theorem} \label{isomorphism-class}
Let $G = (G_\alpha^\lambda)$ and $H=(H_\alpha^\lambda)$ be datum satisfying the conditions from section \ref{conditions}.
Suppose $(g_\alpha^\lambda)_{\lambda\in \iT, \alpha\in\Pi}$ is the set of functions
$g_\alpha^\lambda = H_\alpha^\lambda / G_\alpha^\lambda$, and suppose that the $g_\alpha^\lambda$ are invertible rational functions, $g_\alpha^\lambda\in (\iA_\lambda)^*$.
Then there is an isomorphism,
\begin{align*}
\qheckeh\to \qhecke
\end{align*}
\end{theorem}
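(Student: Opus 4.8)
The plan is to realize the desired isomorphism through the faithful representations on $\lqA$ supplied by Theorem \ref{PBW}, after manufacturing a single splitting family that simultaneously controls both $G$ and $H$. First I would record that the ratio family $g=(g_\alpha^\lambda)$, $g_\alpha^\lambda = H_\alpha^\lambda/G_\alpha^\lambda$, again satisfies the four conditions of section \ref{conditions}. Each condition---the associative relation $s_\alpha(g_\alpha^\lambda)=g_\alpha^{s_\alpha(\lambda)}$, triviality when $s_\alpha(\lambda)=\lambda$, the braid relation $w_{\alpha,\beta}s_\alpha(g_\alpha^\lambda)=g_{w_{\alpha,\beta}s_\alpha(\alpha)}^{w_{\alpha,\beta}s_\alpha(\lambda)}$, and triviality at $\alpha$-exceptional weights---is preserved under forming the quotient $H_\alpha^\lambda/G_\alpha^\lambda$, since both $G$ and $H$ satisfy it and the $W$-action is a ring automorphism. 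By hypothesis every $g_\alpha^\lambda$ is invertible in $\qA_\lambda$. Applying Lemma \ref{splitting} to $g$ produces a splitting family $F=(F_\alpha^\lambda)$ with $F_\alpha^\lambda\, s_\alpha(F_\alpha^{s_\alpha(\lambda)})=g_\alpha^\lambda$; because one member of each pair $\{F_\alpha^\lambda,F_\alpha^{s_\alpha(\lambda)}\}$ equals $1$ and the other equals $g_\alpha^\lambda$, every $F_\alpha^\lambda$ is invertible.

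The crucial point is to choose the three splitting families compatibly. The side-choice made in the proof of Lemma \ref{splitting}---which of $\lambda,\,s_\alpha(\lambda)$ carries the nontrivial value---depends only on the $W$-orbit combinatorics of $\lambda$ in $\qT$ and on the exceptional locus, data shared by $G$, $H$ and $g$. Hence I would run the construction of Lemma \ref{splitting} once and use the same distinguished element for all three families, obtaining splitting families $F^G=(a_\alpha^\lambda)$, $F^H=(b_\alpha^\lambda)$ and $F=(c_\alpha^\lambda)$ for $G,H,g$ respectively satisfying $b_\alpha^\lambda=a_\alpha^\lambda c_\alpha^\lambda$ for every $\alpha,\lambda$: on the distinguished side all three values are the corresponding $G,H,g$ and $H=Gg$, while on the other side all three values are $1$. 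The consistency verifications of Lemma \ref{splitting} apply verbatim to each family, since wherever a value was forced to be $1$ (exceptional or fixed weights) this holds simultaneously for $G$, $H$ and $g$.

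Now I would invoke the faithful representations of Theorem \ref{PBW}. Let $\Theta_G:\qhecke\hookrightarrow\End_k(\lqA)$ and $\Theta_H:\qheckeh\hookrightarrow\End_k(\lqA)$ be the faithful representations built from $F^G$ and $F^H$; write $r_\alpha^\lambda$ and $\hat r_\alpha^\lambda$ for the respective generators. Then $\Theta_G(r_\alpha^\lambda)$ acts as $s_\alpha a_\alpha^\lambda 1_\lambda$, and as $h\demazure_\alpha 1_\lambda$ when $s_\alpha(\lambda)=\lambda$, and likewise $\Theta_H(\hat r_\alpha^\lambda)$ acts as $s_\alpha b_\alpha^\lambda 1_\lambda$; both representations restrict to the multiplication action of $\lqA$. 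Since $a_\alpha^\lambda,c_\alpha^\lambda\in\qA_\lambda$ commute, for $s_\alpha(\lambda)\neq\lambda$ one computes on $f\in\lqA$ that $\Theta_G(r_\alpha^\lambda c_\alpha^\lambda)(f)=s_\alpha(a_\alpha^\lambda c_\alpha^\lambda 1_\lambda f)=s_\alpha(b_\alpha^\lambda 1_\lambda f)=\Theta_H(\hat r_\alpha^\lambda)(f)$, while for $s_\alpha(\lambda)=\lambda$ we have $c_\alpha^\lambda=1$ and both sides act as $h\demazure_\alpha 1_\lambda$. Thus $\Theta_G(r_\alpha^\lambda c_\alpha^\lambda)=\Theta_H(\hat r_\alpha^\lambda)$, and $\Theta_G,\Theta_H$ agree on $\lqA$. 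Because each $c_\alpha^\lambda$ is invertible, the generators $\Theta_H(\hat r_\alpha^\lambda)$ together with the multiplication operators generate the same subalgebra of $\End_k(\lqA)$ as the $\Theta_G(r_\alpha^\lambda)$: one has $\Theta_H(\hat r_\alpha^\lambda)=\Theta_G(r_\alpha^\lambda c_\alpha^\lambda)\in\Theta_G(\qhecke)$ and, conversely, $\Theta_G(r_\alpha^\lambda)=\Theta_H(\hat r_\alpha^\lambda (c_\alpha^\lambda)^{-1})\in\Theta_H(\qheckeh)$. Hence $\Theta_G(\qhecke)=\Theta_H(\qheckeh)$ inside $\End_k(\lqA)$, and $\phi:=\Theta_G^{-1}\circ\Theta_H:\qheckeh\to\qhecke$ is an isomorphism, determined by $\phi|_{\lqA}=\mathrm{id}$ and $\phi(\hat r_\alpha^\lambda)=r_\alpha^\lambda c_\alpha^\lambda$.

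The step I expect to be the main obstacle is the simultaneous splitting $b_\alpha^\lambda=a_\alpha^\lambda c_\alpha^\lambda$, that is, the assertion that the braid-consistent choices in Lemma \ref{splitting} can be made uniformly across $G$, $H$ and $g$; this is precisely where the rank-$2$ and rank-$3$ case analysis underlying Lemma \ref{splitting} is reused, and it is what lets the argument bypass a direct verification of the braid relation for $\phi$. A more computational alternative would check the braid relation for $\phi$ head-on, reducing via freeness over $\qA_\lambda$ (Theorem \ref{PBW}) and Lemma \ref{permute} to the factorwise identity $s_{\alpha_1}\cdots s_{\alpha_{i-1}}(c_{\alpha_i}^{\lambda_i})=s_{\beta_1}\cdots s_{\beta_{j-1}}(c_{\beta_j}^{\mu_j})$ for $j=p(i)$, mirroring the computation $s_{\alpha_m}\cdots s_{\alpha_{t+1}}(P)=P'$ in the proof of Theorem \ref{PBW}; the representation-theoretic route above is cleaner and I would present it as the main argument.
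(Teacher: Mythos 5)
Your proof is correct, and its core coincides with the paper's: both apply Lemma \ref{splitting} to the ratio family $g=(g_\alpha^\lambda)$ and realize the isomorphism as the change of generators $\hat r_\alpha^\lambda \mapsto r_\alpha^\lambda c_\alpha^\lambda$ by the invertible splitting factors. Where you genuinely differ is in the verification. The paper splits only $g$, defines $\tau_\alpha^\lambda := r_\alpha^\lambda F_\alpha^\lambda$ inside $\qhecke$, asserts (citing the computations in the proof of Theorem \ref{PBW}) that these elements satisfy the defining relations of $\qheckeh$, and concludes because the $\tau_\alpha^\lambda$ together with $\lA$ generate $\qhecke$; injectivity of the resulting map is left implicit (it follows from the PBW basis, or by running the same construction with $g^{-1}$ to produce an inverse). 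You instead transport everything into $\End_k(\lqA)$ via the two faithful polynomial representations of Theorem \ref{PBW}, built from splitting families for $G$ and $H$ chosen with the same side conventions so that $b_\alpha^\lambda = a_\alpha^\lambda c_\alpha^\lambda$. This buys something real: no relation of $\qheckeh$ (in particular the delicate braid relation) ever has to be re-checked, and bijectivity is automatic from the equality of operator images, which also repairs the injectivity step the paper glosses over. The price is the extra compatibility claim, which you correctly identify as the crux and correctly justify: the branching in the construction of Lemma \ref{splitting} (fixed, exceptional, or generic weight; the rank $2$ and $3$ consistency cases) depends only on the $W$-orbit combinatorics of $\qT$ and the exceptional locus, which $G$, $H$ and $g$ share, and the construction's output is multiplicative in the datum once the distinguished sides agree. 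Both routes produce the identical isomorphism, determined by the identity on $\lqA$ and $\hat r_\alpha^\lambda \mapsto r_\alpha^\lambda c_\alpha^\lambda$.
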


\begin{proof}

Suppose $G$ and $H$ are sets of datum satisfying the conditions from section \ref{conditions}.
Suppose, further, that $g_\alpha^\lambda = H_\alpha^\lambda / G_\alpha^\lambda$ is a unit in $\iA_\lambda$. 
By the splitting lemma, \ref{splitting}, there exists a splitting family $(F) = (F_\alpha^\lambda)$ for $(g_\alpha^\lambda)$. 
As each $F_\alpha^\lambda$ is either $1$ or $g_\alpha^\lambda$, we find that $F^\lambda_\alpha$ is also invertible in $\iA_\lambda$.
Consider the elements,
\begin{align*}
\tau_\alpha^\lambda = r_\alpha^\lambda F_\alpha^\lambda\in \qhecke.
\end{align*}
From the proof of Theorem \ref{PBW}, we find the elements $\tau_\alpha^\lambda$ satisfy the same relations as $r_\alpha^\lambda\in \qheckeh$. Moreover they generate, along with $\lA$ the algebra $\qhecke$. This proves our claim.
\end{proof}

%\input{weight-induction}
%\begin{document}
\section{Applications} \label{applications}

\subsection{Affine Hecke algebras and localized quiver Hecke algebras} \label{isomorphism}
Given a root datum $(X,Y,R,\check{R},\Pi)$ and set of parameters $c_\alpha\in \C^*, \alpha\in \Pi$, 
with a fixed $h_0\in \C$, we construct datum $G$ satisfying
the properties above, and an isomorphism
$\qhecke \to \lhecke$. 

If $s_\alpha(\lambda)=\lambda$ or $\lambda$ is $\alpha$-exceptional, let $G_\alpha^\lambda=1$. 
Otherwise, for $\lambda$ with 
$s_\alpha(\lambda)\not=\lambda$ let
\begin{align*}
G_\alpha^\lambda= 
(c_\alpha+q_\alpha P_{-\alpha})(P_{-\alpha}-c_\alpha)(-P_{-\alpha})^{-2}
\end{align*}

\begin{theorem}\label{proof}
The data $G$ constructed above satisfies the conditions of section \ref{conditions},
so $\qhecke$ is well defined.
Consider the map $\qhecke \to \lhecke$ which is the 
identity on $\lqA$, and on generators is given by:
\begin{align*}
r^\lambda_{\alpha}\mapsto  \begin{cases}
(c_\alpha+q_\alpha P_{-\alpha})^{-1} (T_{s_\alpha}-q_{s_\alpha})1_{\lambda},& 
\text{if $s_\alpha(\lambda)=\lambda$},\\
\left( \dfrac{P_{-\alpha}}{c_\alpha+P_{-\alpha}+hc_\alpha P_{-\alpha}}\right)
1_{s_\alpha(\lambda)}T_{s_\alpha} 1_\lambda , & \text{ if $\lambda$ is $\alpha$-exceptional.},\\
1_{s_\alpha(\lambda)}T_{s_\alpha} 1_\lambda , & \text{else.}
\end{cases}
\end{align*}
This map is well defined and it is an isomorphism.
\end{theorem}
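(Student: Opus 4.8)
The plan is to prove the two assertions in turn. Throughout write $p = P_{-\alpha}$ and keep in hand the two relations that drive every computation: $q_\alpha = 1+hc_\alpha$ and $U_\alpha U_{-\alpha}=1$, the latter giving $P_\alpha + P_{-\alpha} + hP_\alpha P_{-\alpha} = 0$, equivalently $s_\alpha(p) = P_\alpha = -p/(1+hp)$ and $c_\alpha h p - q_\alpha p = -p$. These let one simplify any expression obtained by applying $s_\alpha$ to a rational function of $p$.

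First I would check that $G$ satisfies the four axioms of Section \ref{conditions}. Axioms (2) and (4) hold by construction, as $G_\alpha^\lambda$ is set to $1$ in exactly those cases. For the associative relation $s_\alpha(G_\alpha^\lambda) = G_\alpha^{s_\alpha(\lambda)}$ I would apply $s_\alpha$ to $G_\alpha^\lambda = (c_\alpha + q_\alpha p)(p-c_\alpha)(-p)^{-2}$ and simplify using the identities above; a short computation returns the same rational function of $p$, and since $s_\alpha$-fixedness and $\alpha$-exceptionality are unchanged under $\lambda \mapsto s_\alpha(\lambda)$, the two weights lie in the same branch of the piecewise definition. For the braid relation I would split on the parity of $m_{\alpha,\beta}$ using Lemma \ref{root-choice}: when $m$ is even, $w_{\alpha,\beta}s_\alpha$ fixes $\alpha$ and hence $p$, so the relation is immediate once one checks the branch-assignment is $W^{\alpha,\beta}$-equivariant; when $m$ is odd one has $c_\alpha = c_\beta$ and $w_{\alpha,\beta}s_\alpha$ sends $\alpha \mapsto \beta$, $p \mapsto P_{-\beta}$, so the formula reappears with $\beta$ in place of $\alpha$, which is exactly $G_\beta^{w_{\alpha,\beta}s_\alpha(\lambda)}$.

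Next I would verify that $\phi$ respects each defining relation of $\qhecke$. The idempotent relation is clear from the shape of the images. For the commutativity relation the key case $s_\alpha(\lambda)=\lambda$ telescopes cleanly: writing $g = (c_\alpha+q_\alpha p)^{-1}$ and using $T_\alpha f = s_\alpha(f)T_\alpha + c_\alpha h\demazure_\alpha(f)$ together with $(-p)\,h\demazure_\alpha(f) = f - s_\alpha(f)$, the difference $\phi(r_\alpha^\lambda)f - s_\alpha(f)\phi(r_\alpha^\lambda)$ collapses to $g(c_\alpha+q_\alpha p)\,h\demazure_\alpha(f) = h\demazure_\alpha(f)$; in the non-fixed cases the Demazure correction is annihilated by the mismatched idempotents and the relation is trivial. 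The quadratic relation is the heart of well-definedness. From Lemma \ref{commutativity-relation} one extracts $1_\lambda T_\alpha 1_\lambda = c_\alpha(-p)^{-1}1_\lambda$, and then the finite quadratic relation $T_\alpha^2 = (q_\alpha-1)T_\alpha + q_\alpha$ yields
\begin{align*}
1_\lambda T_\alpha 1_{s_\alpha(\lambda)} T_\alpha 1_\lambda
= \big(q_\alpha - hc_\alpha^2 p^{-1} - c_\alpha^2 p^{-2}\big)1_\lambda
= G_\alpha^\lambda 1_\lambda,
\end{align*}
which settles the generic non-fixed case. In the exceptional case the same computation (now retaining the Demazure correction term, which is precisely what contributes the $-c_\alpha^2 p^{-2}$) produces the factor $\tilde G = (c_\alpha+q_\alpha p)(p-c_\alpha)p^{-2}$, and the prefactor $g = p/(c_\alpha+q_\alpha p)$ is engineered so that $s_\alpha(g)\,g\,\tilde G = 1 = G_\alpha^\lambda$, using $s_\alpha(g) = p/(p-c_\alpha)$. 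In the fixed case the identity $(T_\alpha - q_\alpha)^2 = -(q_\alpha+1)(T_\alpha - q_\alpha)$ reduces $\phi(r_\alpha^\lambda)^2$ to $h_0\,\phi(r_\alpha^\lambda)$.

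The braid relation is where I expect the genuine difficulty. Its verification reduces to the braid relations among the $T_\alpha$ in $\fhecke$, which always hold, together with the compatibility of the twisting factors — and this compatibility is exactly the associative and braid axioms on $G$ proved above. The delicate situation is the mixed one: $\lambda$ parabolic with respect to $W^{\alpha,\beta}$ but fixed by only one of $s_\alpha, s_\beta$, together with the $\alpha$-exceptional weights in types $B_2, G_2$ isolated in Corollary \ref{orbit}. Here the images interleave true $T$-generators with Demazure-type operators, and I would match the two sides via the conjugation identity $\demazure_{\alpha_m} = (w_\ell s_\alpha)\demazure_{\alpha_1}(w_\ell s_\alpha)^{-1}$, importing the analysis from the proof of Theorem \ref{PBW} essentially verbatim. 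Finally, for the isomorphism I would pass to associated graded algebras. The map $\phi$ is right $\lqA$-linear and filtered, sending $\mathscr{F}^n$ into the $\qA_\lambda$-span of the $T_w$ with $\ell(w)\le n$, because each $r_\alpha^\lambda$ maps to a unit multiple of $T_{s_\alpha}$ plus terms of lower $T$-length. By Theorem \ref{PBW}, $\qhecke 1_\lambda$ is free over $\qA_\lambda$ on a reduced-word basis $\{\bar r_w\}_{w\in W}$, while $\lhecke 1_\lambda$ is free on $\{T_w\}_{w\in W}$; the induced map $\operatorname{gr}\phi$ sends $\bar r_w$ to a unit multiple of $\bar T_w$, so it is diagonal with invertible entries and hence an isomorphism. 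As both filtrations are exhaustive and bounded below, $\phi$ is itself an isomorphism, completing the proof.
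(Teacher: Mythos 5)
Your overall architecture is the same as the paper's: verify the axioms on $G$, check the defining relations of $\qhecke$ on the proposed images using Lemma \ref{commutativity-relation} and the finite quadratic relation for $T_\alpha$, and deduce the isomorphism from a PBW comparison on both sides. Your handling of the non-fixed quadratic relation (the extraction of $1_\lambda T_{s_\alpha}1_{s_\alpha(\lambda)}T_{s_\alpha}1_\lambda = G_\alpha^\lambda$) and of the exceptional case (the identities $s_\alpha(g) = P_{-\alpha}/(P_{-\alpha}-c_\alpha)$ and $g\,s_\alpha(g)\,\tilde G = 1$) is correct and in fact spells out what the paper leaves implicit.

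However, there is a genuine gap in your fixed-point case $s_\alpha(\lambda)=\lambda$. You claim that $(T_\alpha - q_\alpha)^2 = -(q_\alpha+1)(T_\alpha - q_\alpha)$ ``reduces $\phi(r_\alpha^\lambda)^2$ to $h_0\,\phi(r_\alpha^\lambda)$.'' It does not, by itself: writing $g = (c_\alpha+q_\alpha P_{-\alpha})^{-1}$, the square is $g(T_\alpha-q_\alpha)\,g\,(T_\alpha-q_\alpha)1_\lambda$, and $T_\alpha - q_\alpha$ does not commute with $g$. Moving it across gives $(T_\alpha-q_\alpha)g = s_\alpha(g)(T_\alpha-q_\alpha) + (c_\alpha+q_\alpha P_{-\alpha})\,h\demazure_\alpha(g)$, so after using your quadratic identity one still has to verify the rational-function identity
\begin{align*}
-(q_\alpha+1)\,g\,s_\alpha(g) \;+\; h\demazure_\alpha(g) \;=\; h_0\, g,
\end{align*}
which is true (using $s_\alpha(P_{-\alpha}) = -P_{-\alpha}/(1+h_0P_{-\alpha})$) but is precisely the computation the paper flags and then deliberately avoids: ``One could also expand the expression \dots\ but we will use the induced representation of $\ihecke$ on $\iA$ for this and the braid relations.'' The same issue infects your braid-relation argument in the full-stabilizer case: the conjugation identity $\demazure_{\alpha_m} = (w_\ell s_\alpha)\demazure_{\alpha_1}(w_\ell s_\alpha)^{-1}$ is an identity of operators on the polynomial ring, and to convert it into a relation between elements of $\lhecke$ you must either compute directly in $\lhecke$ (which you do not do) or pass through the Demazure--Lusztig representation of section \ref{lusztig-rep}, extended to $\lhecke 1_\Lambda$, and invoke its \emph{faithfulness} --- which you never state, and which is exactly the paper's mechanism (the paper observes $\phi(r_\alpha^\lambda)$ acts as $h\demazure_\alpha$, so both the quadratic relation $(r_\alpha^\lambda)^2 = h_0 r_\alpha^\lambda$ and the braid relations become operator identities). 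Making faithfulness explicit closes both holes at once, and then the direct expansion above becomes unnecessary; without it, neither the fixed-case quadratic relation nor the full-stabilizer braid relation is actually proved in your write-up.
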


\begin{proof}
We easily see that $G^\lambda_\alpha$ satisfies the associative property, 
and the braid relation follows from the Weyl group lemmas. 
It follows that $\qhecke$
is well defined. To check that the above map is well defined 
we must check the 4 relations from section \ref{relations} on the generators, and confirm that there is no
right $\qA_\lambda$-torsion in $\lhecke1_\lambda$.

Abusing notation, we use $r_\alpha^\lambda$ for its image in $\lhecke$.
From the definition of $\lhecke$ we have that 
$1_{s_\alpha(\nu)} r_\alpha^\lambda  = r_\alpha^\lambda 1_\nu =
\delta_{\lambda,\nu}r_\alpha^\lambda$.

We now check the quadratic relation,
\begin{align*}
r_{\alpha}^{s_{\alpha}(\lambda)}r_{\alpha}^{\lambda} = 
\begin{cases}
G_\alpha^\lambda
&\text{ if $s_\alpha(\lambda) \not = \lambda$,} \\
hr_\alpha^\lambda
&\text{ if $s_\alpha(\lambda)=\lambda$}.
\end{cases}
\end{align*}
First, suppose $s_\alpha(\lambda)\not =\lambda$. We have the quadratic relation
$T_{s_\alpha}^2 = (q_{s_\alpha}-1)T_{s_\alpha}+q_{s_\alpha}$. We multiply on the left and right by $1_\lambda$
to obtain,
\begin{align*}
1_\lambda T_{s_\alpha}^2 1_\lambda =& (q_{s_\alpha}-1)1_\lambda T_{s_\alpha} 1_\lambda +q_{s_\alpha} 1_\lambda,\\
=& (q_{s_\alpha}-1)c_\alpha(-P_{-\alpha})^{-1}1_\lambda + q_\alpha.
\end{align*}
On the other hand, we have:
\begin{align*}
1_\lambda T_{s_\alpha}^21_\lambda &= 
1_\lambda T_{s_\alpha} (1_\lambda+1_{s_\alpha(\lambda)})T_{s_\alpha} 1_\lambda, \\
=& 1_\lambda T_{s_\alpha}1_{s_\alpha(\lambda)}T_{s_\alpha} 1_\lambda + 
c_\alpha^2 (-P_{-\alpha})^{-2}.
\end{align*}
Equating the two expressions yields the equality:
\begin{align*}
1_\lambda T_{s_\alpha} 1_{s_\alpha(\lambda)}T_{s_\alpha} 1_\lambda = 
(c_\alpha+q_\alpha P_{-\alpha})(P_{-\alpha}-c_\alpha)(-P_{-\alpha})^{-2}.
\end{align*}
Consequently, we find that $1_{s_\alpha(\lambda)}T_{s_\alpha} 1_\lambda$ is invertible when $\lambda(P_{-\alpha})\not=c_\alpha, -q^{-1}c_\alpha$. 

Next, we verify the commutativity relation:
\begin{align*}
r_{\alpha}^{\lambda} f - s_{\alpha}(f)r_{\alpha}^{\lambda}
=\begin{cases} 0 &\text{, if $s_\alpha(\lambda)\not =\lambda$,}\\
h\demazure_\alpha(f) &\text{, if $s_\alpha(\lambda)=\lambda$}.
\end{cases}
\end{align*}
First, suppose $s_\alpha(\lambda)\not=\lambda$. We simply multiply the original
commutativity relation,
\begin{align*}
T_{s_\alpha} f - s_\alpha(f) T_{s_\alpha} = c_\alpha h\demazure(f),
\end{align*}
on the left by $1_{s_\alpha(\lambda)}$ and on the right
by $1_\lambda$. Since $1_\lambda 1_{s_\alpha(\lambda)}=0$, the claim follows.

Now suppose $s_\alpha(\lambda)=\lambda$. 
We check directly:
\begin{align*}
(T_\alpha -q_\alpha)f - s_\alpha(f)(T_\alpha - q_\alpha) 
=&
c_\alpha h\demazure_\alpha(f) - q_\alpha (f-s_\alpha(f)),\\
=&
(c_\alpha+q_\alpha P_{-\alpha})h\demazure_\alpha(f).
\end{align*}

One could also expand the expression,
\begin{align*}
(c_\alpha+q_\alpha P_{-\alpha})^{-1}(T_\alpha-q_\alpha)
(c_\alpha+q_\alpha P_{-\alpha})^{-1}(T_\alpha-q_\alpha)
\end{align*}
and verify the quadratic relation, $(r_\alpha^\lambda)^2=h r_\alpha^\lambda$, but we will use the induced representation of $\ihecke$ on $\iA$ for this and the braid relations.

Finally we verify the braid relations. The only standard parabolic subgroups
of the Coxeter group $(W^{\alpha,\beta},\{s_\alpha,s_\beta\})$ 
are $W^{\alpha,\beta}, \langle e\rangle, \langle s_\alpha\rangle, \langle s_\beta\rangle$.

Suppose that the stabilizer of $\lambda$ is the trivial
group $\langle e\rangle$. The element $r_\alpha^\lambda$ is given by
$1_{s_\alpha(\lambda)}T_{s_\alpha}1_\lambda$. In this case, with 
$\lambda' = \cdots s_\alpha s_\beta s_\alpha(\lambda)$, we have
\begin{align*}
1_{\lambda'} \cdots T_{s_\alpha} T_{s_\beta} T_{s_\alpha} 1_\lambda = 
1_{\lambda'}\cdots T_{s_\alpha} 1_{s_\beta s_\alpha(\lambda)} 
T_{s_\beta} 1_{s_\alpha(\lambda)} T_{s_\alpha} 1_\lambda,
\end{align*}
and similarly for $\cdots T_{s_\beta} T_{s_\alpha} T_{s_\beta}$. Thus the braid relation for 
$T_{s_\alpha}, T_{s_\beta}$ yields the braid relation between $r_\alpha,r_\beta$.

Consider, now, the case where the stabilizer of $\lambda$ is $s_\alpha$. In this case, we also have
\begin{align*}
1_{\lambda'}\cdots T_{s_\alpha}T_{s_\beta}T_{s_\alpha} 1_\lambda = 
1_{\lambda'}\cdots T_{s_\alpha}1_{s_\beta s_\alpha(\lambda)}
T_{s_\beta}1_{s_\alpha(\lambda)}T_{s_\alpha} 1_\lambda.
\end{align*}
Replacing the rightmost $T_{s_\alpha}$ with 
$(c_\alpha + q_{s_\alpha}P_{-\alpha})^{-1}(T_{s_\alpha}-q_{s_\alpha})$
and using the commutativity relation yields the desired result.

Finally, suppose that $\operatorname{stab}_{W^{\alpha,\beta}}(\lambda)=W^{\alpha,\beta}$.
We will use the Demazure-Lusztig representation of $\ihecke$ from section \ref{lusztig-rep}.

Recall equation \eqref{t-q}, which gives the formula for the action of $\widehat{T_\alpha-q_\alpha}$ on $\iA$:
\begin{align*}
\widehat{T_\alpha-q_\alpha}: f\mapsto 
(c_\alpha+q_\alpha P_{-\alpha})h\demazure_\alpha(f).
\end{align*}

We extend the action of $\ihecke$ on $\qA$
to an action of $\lhecke1_\lambda$ on $\qA_\lambda$,
and find $r_\alpha^\lambda = h\demazure_\alpha$. As the Demazure-Lusztig representation is faithful this shows the braid relation
between $r_\alpha^\lambda, r_\lambda^\beta$, as well as the quadratic relation $r_\alpha^\lambda r_\alpha^\lambda = h r_\alpha^\lambda$.

From the structure theory of $\lhecke$ we see it has no polynomial torsion,
and the same PBW basis, by the same Demazure-Lusztig representation, thus the map in question is an isomorphism.

\end{proof}
\subsection{Quiver Hecke algebras}\label{graded-hecke-algebra}
In this section we define quiver Hecke algebras attached to simply connected semisimple root data as a subalgebra of $\qhecke$ defined in the previous section.
Let $h_0 = 0$, then $P_\alpha + P_\beta = P_{\alpha+\beta}$.
Pick a choice of parameters $c_\alpha\in k^*$.
Recall that there are no exceptional weights in this case, as every weight is conjugate to a standard parabolic weight.

Recall the data $G$ associated to a simply connected semisimple root datum $(X,Y, R,\check{R},\Pi)$:
\begin{align*}
G_\alpha^\lambda = \begin{cases}
1 & \text{if $s_\alpha(\lambda) = \lambda$,} \\
(P_\alpha-c_\alpha) (P_\alpha+c_\alpha) (P_\alpha)^{-2} & \text{else.}
\end{cases}
\end{align*}

Suppose the characteristic of $k$ is not $2$.
Define the data $H$ and $g$ as follows:
\begin{align*}
H_\alpha^\lambda =& \begin{cases}
1 & \text{if $s_\alpha(\lambda) = \lambda$,} \\
c_\alpha - P_\alpha & \text{if $\langle \lambda, \alpha\rangle = c_\alpha$,} \\
c_\alpha + P_\alpha & \text{if $\langle \lambda, \alpha \rangle = -c_\alpha$,} \\
1 & \text{else,} 
\end{cases} \\
g_\alpha^\lambda =& \begin{cases}
1 & \text{if $s_\alpha(\lambda) = \lambda$}, \\
(P_\alpha+c_\alpha)^{-1}(P_\alpha)^2 & \text{if $\langle \lambda, \alpha\rangle =  c_\alpha$}, \\
(P_\alpha-c_\alpha)^{-1} (P_\alpha)^2 & \text{if $\langle \lambda, \alpha\rangle = -c_\alpha$}, \\
(P_\alpha-c_\alpha)^{-1}(P_\alpha+c_\alpha)^{-1}(P_\alpha)^2 & \text{else,}
\end{cases}
\end{align*}
In the case that the characteristic of $k$ is $2$, define $H$ and $g$ instead as,
\begin{align*}
H_\alpha^\lambda =& \begin{cases}
1 & \text{if $s_\alpha(\lambda) = \lambda$,} \\
(c_\alpha - P_\alpha)^2 & \text{if $\langle \lambda, \alpha\rangle = c_\alpha$,} \\
1_\lambda & \text{else,} 
\end{cases} \\
g_\alpha^\lambda =& \begin{cases}
1_\lambda & \text{if $s_\alpha(\lambda) = \lambda$}, \\
(P_\alpha)^2 & \text{if $\langle \lambda, \alpha\rangle =  c_\alpha$}, \\
(P_\alpha-c_\alpha)^{-1}(P_\alpha+c_\alpha)^{-1}(P_\alpha)^2 & \text{else,}
\end{cases}
\end{align*}

\begin{proposition}
The data $H$ satisfies the assumptions of section \ref{conditions}.
Moreover, the algebras $\qhecke$ and $\qheckeh$ are isomorphic.
\end{proposition}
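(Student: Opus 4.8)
The plan is to deduce this directly from Theorem~\ref{isomorphism-class}. The data $G$ recorded here is the $h_0=0$ specialization of the family constructed in Theorem~\ref{proof}, and is therefore already known to satisfy the four conditions of section~\ref{conditions}. Since by construction $g_\alpha^\lambda = H_\alpha^\lambda/G_\alpha^\lambda$, it suffices to carry out two verifications: first, that $H$ itself satisfies all of conditions (1)--(4) of section~\ref{conditions}; and second, that each $g_\alpha^\lambda$ is a unit in $\iA_\lambda$. Granting these, Theorem~\ref{isomorphism-class} immediately produces the isomorphism $\qheckeh\to\qhecke$, which is the assertion.

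For the first verification I would exploit the structural feature that, in each definition of $H$, the value $H_\alpha^\lambda$ depends on $\lambda$ only through the scalar $\langle\lambda,\alpha\rangle = P_\alpha(\lambda)$. Condition (2) is immediate, since $H_\alpha^\lambda=1$ whenever $s_\alpha(\lambda)=\lambda$, and condition (4) is vacuous because there are no $\alpha$-exceptional weights when $h_0=0$. For the associative relation (1) I would use that $s_\alpha$ sends $P_\alpha\mapsto P_{-\alpha}=-P_\alpha$ while $\langle s_\alpha(\lambda),\alpha\rangle = -\langle\lambda,\alpha\rangle$; thus the two cases $\langle\lambda,\alpha\rangle=\pm c_\alpha$ are interchanged and the remaining cases are preserved, so $s_\alpha(H_\alpha^\lambda)=H_\alpha^{s_\alpha(\lambda)}$ holds case by case (in characteristic $2$ the two cases coincide, and the squared factor is symmetric under $P_\alpha\mapsto -P_\alpha$). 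For the braid relation (3) I would apply Lemma~\ref{root-choice}: writing $w=w_{\alpha,\beta}s_\alpha$, one has $w(\alpha)=\alpha$ when $m_{\alpha,\beta}$ is even and $w(\alpha)=\beta$ when $m_{\alpha,\beta}$ is odd, in which latter case $c_\alpha=c_\beta$. Since $w$ preserves the pairing, $\langle w(\lambda),w(\alpha)\rangle=\langle\lambda,\alpha\rangle$, so $w(\lambda)$ (measured against the root $w(\alpha)$) lands in the same case of the definition as $\lambda$ does against $\alpha$; as applying $w$ to $H_\alpha^\lambda$ merely replaces $P_\alpha$ by $P_{w(\alpha)}$, the two sides of the braid relation agree.

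For the second verification I would use that $\iA_\lambda$ is the local ring of $\iA$ at $\lambda$, so a rational function lies in $(\iA_\lambda)^\times$ precisely when it has a well-defined nonzero value at $\lambda$. In every case $g_\alpha^\lambda$ is a product of powers of $P_\alpha$ and of the linear factors $P_\alpha\pm c_\alpha$, so I would simply evaluate at $\lambda$ using $P_\alpha(\lambda)=\langle\lambda,\alpha\rangle$; the numerators $P_\alpha^2$ give $\langle\lambda,\alpha\rangle^2\neq 0$ off the wall $\langle\lambda,\alpha\rangle=0$. The only place a denominator could fail to be invertible is a factor $P_\alpha\pm c_\alpha$ evaluated on the locus $\langle\lambda,\alpha\rangle=\mp c_\alpha$, where its value is $\pm 2c_\alpha$. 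This is the crux and the main obstacle of the argument: in characteristic $\neq 2$ one has $2c_\alpha\neq 0$, so the potentially singular factor is a unit at $\lambda$ and each $g_\alpha^\lambda$ is invertible; in characteristic $2$ this reasoning breaks down, which is exactly why $H$ and $g$ are redefined there using squared factors arranged so that the vanishing factor cancels against the numerator, restoring invertibility. Checking this handful of cases completes the second verification, and the proposition then follows from Theorem~\ref{isomorphism-class}.
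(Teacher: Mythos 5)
Your proposal is correct and is essentially the paper's own proof: the paper's argument consists precisely of invoking Theorem \ref{isomorphism-class} on the triple $(G,H,g)$, leaving implicit the verifications you spell out (that $H$ satisfies conditions (1)--(4) of section \ref{conditions}, with (4) vacuous at $h_0=0$, and that each $g_\alpha^\lambda$ is a unit in $\iA_\lambda$). One cosmetic slip in your write-up: the denominator factor actually present in $g_\alpha^\lambda$ on the wall $\langle\lambda,\alpha\rangle=\pm c_\alpha$ is $P_\alpha\pm c_\alpha$ (same sign), whose value there is $\pm 2c_\alpha$, so your $\mp$ as written points at the vanishing factor rather than the surviving one --- but your conclusion, invertibility in characteristic $\not=2$ and the need for the modified data in characteristic $2$, is exactly right.
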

\begin{proof}
Simply apply theorem \ref{isomorphism-class} to the datum $G, H, g$.
\end{proof}
The advantage of the datum $H_\alpha^\lambda$ is that it is contained in the image of $k[\dT]\hookrightarrow k[\dT]_\lambda$.
This allows us to define the following subalgebra of the algebras $\qheckeh$.
\begin{definition}
Suppose $H = (H_\alpha^\lambda)_{\alpha\in \Pi, \lambda\in \dT}$ is a datum which satisfies the conditions of section \ref{conditions}, and for which $H_\alpha^\lambda$ is in the image of the inclusion $k[\dT]\hookrightarrow \aA_\lambda$.
Let $\grhecke$ be the subalgebra of $\qheckeh$ generated by the image of $k[\dT]\hookrightarrow \aA_\lambda$, for each $\lambda\in \dT$ and $r_\alpha^\lambda, \alpha\in \Pi, \lambda\in \dT$.
\end{definition}
\begin{proposition}
The inclusion $\grhecke\hookrightarrow \qhecke$ gives, via pullback, an equivalence from the category of finite representations of $\qhecke$ and the category of representations of $\grhecke$ for which $(P_\alpha - \langle \lambda, \alpha\rangle)^n1_\lambda$ acts by $0$ for large enough $n$.
\end{proposition}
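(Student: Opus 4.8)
The functor in question is restriction of scalars along $\grhecke\hookrightarrow\qheckeh\cong\qhecke$, the isomorphism being Theorem \ref{isomorphism-class}, so I would first pin down exactly how much larger $\qhecke$ is than $\grhecke$. By Theorem \ref{PBW}, $\qhecke1_\lambda$ is free as a right $\qA_\lambda$-module on a PBW basis $B$ of reduced words $r_{\alpha_n}^{\cdots}\cdots r_{\alpha_1}^{\lambda}$, and these words already lie in $\grhecke$. Running the same reduction inside $\grhecke$ — using that $H_\alpha^\lambda\in k[\dT]$ and that the operator $h\demazure_\alpha$ carries $k[\dT]$ into $k[\dT]$, so that every commutation relation rewrites a product with coefficients again in $k[\dT]$ — shows $\grhecke1_\lambda$ is spanned over $k[\dT]$ by $B$, and linear independence is inherited from Theorem \ref{PBW}. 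Since $\qA_\lambda=(k[\dT])_{\mf m_\lambda}$ is the localization of $k[\dT]$ at the maximal ideal $\mf m_\lambda=\{f:\lambda(f)=0\}=\ker\lambda$, this identifies $\qhecke1_\lambda$ with $\grhecke1_\lambda\otimes_{k[\dT]}\qA_\lambda$ as right $\qA_\lambda$-modules; in other words $\qhecke$ is the localization of $\grhecke$ obtained by inverting, weight by weight, the set $S_\lambda=k[\dT]\setminus\mf m_\lambda$. The whole statement is then an instance of the comparison between modules over a ring and over its localization.

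For the easy direction I would check that restriction is well defined. If $V$ is a finite-dimensional $\qhecke$-module then $V=\bigoplus_\lambda V_\lambda$ with $V_\lambda=1_\lambda V$ a finite-dimensional module over the local ring $\qA_\lambda=(k[\dT])_{\mf m_\lambda}$; hence $\mf m_\lambda$ acts nilpotently on $V_\lambda$, which is precisely the stated condition that $\mf m_\lambda$ act locally nilpotently, equivalently $(P_\alpha-\langle\lambda,\alpha\rangle)^n1_\lambda=0$ for $n\gg0$.

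For the quasi-inverse I would extend a nilpotent $\grhecke$-module back to a $\qhecke$-module. Given such a $V$, the hypothesis makes $\mf m_\lambda$ act locally nilpotently on $V_\lambda$, so for $f\in S_\lambda$ the operator $f$ acts as $\lambda(f)\cdot(1+n)$ with $\lambda(f)\in k^*$ and $n$ locally nilpotent, hence invertibly; the $k[\dT]$-action on $V_\lambda$ therefore extends uniquely to a $\qA_\lambda$-action. Keeping the $r_\alpha^\lambda$ unchanged, I then verify the defining relations of $\qhecke$: the idempotent, quadratic (whose right-hand side $H_\alpha^\lambda$ already lies in $k[\dT]$) and braid relations are inherited verbatim from $\grhecke$, while the commutation relation $r_\alpha^\lambda f-s_\alpha(f)r_\alpha^\lambda=h\demazure_\alpha(f)$ (resp.\ $=0$ when $s_\alpha(\lambda)\neq\lambda$) must be propagated from $f\in k[\dT]$ to all $f\in\qA_\lambda$. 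This propagation is formal, since both the twisted commutation and the BGG relation are compatible with inverting elements of $S_\lambda$, using the extension of $h\demazure_\alpha$ to $\lqA$ recorded just before the definition of $\lhecke$; it is the same localization already effected in $\qhecke\cong\lhecke$ of Theorem \ref{proof}.

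Finally I would assemble the equivalence. The two functors are mutually quasi-inverse: restricting the extended action recovers the original $k[\dT]$-action, and extending the restriction of a $\qhecke$-module recovers it by uniqueness of localization. Full faithfulness is automatic, since a $\grhecke$-linear map between nilpotent modules necessarily commutes with the action of $S_\lambda^{-1}$ and hence is $\qhecke$-linear, and essential surjectivity is the extension construction; finite-dimensionality is preserved in both directions, so the finite $\qhecke$-modules match exactly the finite-dimensional nilpotent $\grhecke$-modules. I expect the only genuine obstacle to be the relation-checking in the extension step — confirming that the commutation relations, and in particular the $h\demazure_\alpha$-term at $s_\alpha$-fixed weights, survive localization — but because $\qhecke$ is literally the $S_\lambda$-localization of $\grhecke$, this reduces to the localization already established earlier in the paper rather than to new content.
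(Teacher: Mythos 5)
Your proposal is correct and follows essentially the same route as the paper's own (much terser) proof: both rest on the observation that on a module where the shifted variables act nilpotently, every $f\in k[\dT]$ with $f(\lambda)\neq 0$ acts invertibly (being $\lambda(f)$ plus a nilpotent), so the $k[\dT]$-action extends uniquely to $\qA_\lambda$ and the $\grhecke$-module structure lifts uniquely to $\qhecke$. Your extra scaffolding — the PBW identification of $\qhecke 1_\lambda$ with $\grhecke 1_\lambda\otimes_{k[\dT]}\qA_\lambda$ and the explicit relation-checking under localization — is a fuller justification of what the paper compresses into ``the claim then follows.''
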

\begin{proof}
It is plain that a $\dA$-module $V_\lambda$ with the property $(P_\alpha - \langle \lambda, \alpha\rangle)^n$ acts by zero for large enough $n$ has a unique extension to a module over ${\dA}_\lambda$ by letting elements $f^{-1}\in \dA_\lambda$ with $f\in \dA$ with $f(\lambda)\not=0$ act via the inverse of the action of $f$, which has only non-zero eigenvalues on $V_\lambda$.
The claim the follows, as any module over $\grhecke$ with the above property has a unique lift to a module over $\qheckeh$.
\end{proof}
We do one more change of variables to get the presentation of $\grhecke$ that we need to define a grading.
Let $\psi_\lambda: k[\dT]\to k[\dT]$ be the $W$-equivariant map given by,
\begin{align*}
\psi_\lambda(P_\alpha) = P_\alpha + \langle \lambda, \alpha\rangle .
\end{align*}
After this change of variables we use the notation $x_\alpha$ for the variable $P_\alpha$, thus for $f\in k[\dT]$, a polynomial in $\{P_\alpha\}_{\alpha\in \Pi}$, we consider $\psi_\lambda(f)$ a polynomial in $\{x_\alpha\}_{\alpha\in \Pi}$.

Let $\tilde{R}$ be the algebra with generating set $\{1_\lambda\}_{\lambda\in \dT}\cup \{x_\alpha^\lambda\}_{\alpha\in \Pi, \lambda\in \dT}\cup\{\tau_\alpha^\lambda\}_{\alpha\in \Pi, \lambda\in \dT}$ and relations:

\begin{gather*}
1_\lambda 1_{\lambda'} = 1_\lambda \delta_{\lambda,\lambda'}, \\
x_\alpha^\lambda 1_{\lambda'} = 1_{\lambda'} x_\alpha^\lambda = \delta_{\lambda,\lambda'}x_i^\lambda, \\
\tau_\alpha^\lambda 1_{\lambda'} = 1_{s_\alpha(\lambda')}\tau_\alpha^\lambda = \delta_{\lambda,\lambda'}\tau_\alpha^\lambda, \\
x_\alpha^\lambda x_\beta^\lambda = x_\beta^\lambda x_\alpha^\lambda, \\
\tau_\alpha^\lambda x_\beta^\lambda = x_{s_\alpha(\beta)}^{s_\alpha(\lambda)} \tau_\alpha^\lambda,
\text{ for $s_\alpha(\lambda)\not = \lambda$} , \\
\tau_\alpha^\lambda x_\beta^\lambda - x_{s_\alpha(\beta)}^{\lambda} \tau_\alpha^\lambda = 
c_\alpha \langle \beta, \check{\alpha}\rangle, \text{ for $s_\alpha(\lambda) = \lambda$}
\end{gather*}
along with the following quadratic relations,
\begin{align*}
\tau_\alpha^{s_\alpha(\lambda)} \tau_\alpha^\lambda = \begin{cases}
0 & \text{if  $s_\alpha(\lambda) = \lambda$}, \\
\psi_\lambda(H_\alpha^\lambda) & \text{else.}
\end{cases}
\end{align*}

Also give $\tilde{R}$ the following braid relation between $\tau_\alpha$ and $\tau_\beta$ for $\lambda$ a standard parabolic weight.
For $\alpha, \beta\in \Pi$ distinct and $\lambda\in \dT $ a \emph{standard parabolic weight}
with respect to $\{\alpha, \beta\}$, 
\begin{align*}
\tau_{\alpha_m}^{\lambda_m}\cdots \tau_{\alpha_1}^{\lambda_1} = 
\tau_{\beta_m}^{\mu_m}\cdots \tau_{\beta_1}^{\mu_1},
\end{align*}
where $m=m_{\alpha, \beta}$ is the order of $s_{\alpha}s_\beta$ in $W$, 
\begin{align*}
\alpha_i =& \begin{cases}
\alpha &\text{, $i$ odd,} \\
\beta & \text{, $i$ even,}
\end{cases} \\
\beta_i =& \begin{cases}
\beta &\text{, $i$ odd,} \\
\alpha & \text{, $i$ even,}
\end{cases} \\
\end{align*}
and $\lambda_i = s_{\alpha_{i-1}}\cdots s_{\alpha_1}(\lambda)$, 
$\mu_i = s_{\beta_{i-1}}\cdots s_{\beta_1}(\mu)$. 
Let $R$ be the quotient of $\tilde{R}$ by right polynomial torsion.
Then $R$ has no polynomial torsion.

\begin{proposition}
Suppose $(X,Y,R, \check{R}, \Pi)$ is a simply connected semisimple root datum.
Let $H$ be any datum satisfying the conditions of \ref{conditions} and which is included in the image $k[\dT]\hookrightarrow k[\dT]_\lambda$, so that the algebras $\grhecke, R$ are defined.
Then the map on generators $1_\lambda\mapsto 1_\lambda$, $x_\alpha^\lambda\mapsto P_\alpha - \langle \lambda , \alpha\rangle$, $\tau_\alpha^\lambda\mapsto r_\alpha^\lambda$ is an isomorphism of algebras.

Moreover, suppose for each $\alpha\in \pi, \lambda\in \dT$ that $\psi_\lambda(H_\alpha^\lambda)$ is a homogeneous polynomial in $\{x_\alpha\}_{\alpha\in \Pi}$.
Then let $\operatorname{deg}$ be defined on generators as $\operatorname{deg}(1_\lambda) = 0$, $\operatorname{deg}(x_\alpha^\lambda) = 2$, and 
\begin{align*}
\operatorname{deg}(\tau_\alpha^\lambda) = \begin{cases}
-2 & \text{if $s_\alpha(\lambda) = \lambda$,} \\
\frac{1}{2} \operatorname{deg}(\psi_\lambda(H_\alpha^\lambda)) & \text{else.}
\end{cases}
\end{align*}
Then $\operatorname{deg}$ extends to a grading on the algebra $R$, hence on the quiver Hecke algebra $\grhecke$.
\end{proposition}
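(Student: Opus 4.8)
The plan is to handle both claims through the presentation of the algebra $R$: for the isomorphism I would exhibit the stated map as an isomorphism onto the subalgebra $\grhecke\subset\qheckeh$, and for the grading I would verify that every defining relation of $R$ is homogeneous for the assigned degrees, so that the grading descends to $R$ and, via the isomorphism, to $\grhecke$. First I would define a morphism $\Phi\colon\tilde R\to\qheckeh$ by $1_\lambda\mapsto 1_\lambda$, $x_\alpha^\lambda\mapsto P_\alpha-\langle\lambda,\alpha\rangle$, $\tau_\alpha^\lambda\mapsto r_\alpha^\lambda$, and check it respects each relation. The shift $x_\alpha^\lambda=P_\alpha-\langle\lambda,\alpha\rangle=\psi_\lambda^{-1}(P_\alpha)$ is engineered so that on polynomials $\Phi(\psi_\lambda(f))=f$ for $f\in k[\dT]$; hence the quadratic relation $\tau_\alpha^{s_\alpha(\lambda)}\tau_\alpha^\lambda=\psi_\lambda(H_\alpha^\lambda)$ is sent to $r_\alpha^{s_\alpha(\lambda)}r_\alpha^\lambda=H_\alpha^\lambda$, which holds in $\qheckeh$ since $h_0=0$. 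Using $W$-invariance of the pairing in the form $s_\alpha(x_\beta^\lambda)=x_{s_\alpha(\beta)}^{s_\alpha(\lambda)}$, the commutation relations of $\tilde R$ translate into the relations $r_\alpha^\lambda f-s_\alpha(f)r_\alpha^\lambda=h\demazure_\alpha(f)$ of $\qheckeh$ (here $h\demazure_\alpha$ is $\bgg_\alpha$, and the fixed-case right side is a scalar multiple of $\langle\beta,\check\alpha\rangle$), the idempotent and polynomial relations are immediate, and the braid relation for standard parabolic weights maps onto the braid relation of $\qheckeh$. As $\qheckeh$ has no right polynomial torsion, $\Phi$ factors through $R$, giving $\bar\Phi\colon R\to\qheckeh$ whose image is the subalgebra generated by the $1_\lambda$, the shifted polynomials, and the $r_\alpha^\lambda$, that is, exactly $\grhecke$; so $\bar\Phi$ is onto $\grhecke$.

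For injectivity I would compare PBW bases blockwise. By Theorem~\ref{PBW}, $\qheckeh1_\lambda$ is free over $\qA_\lambda=k[\dT]_\lambda$ on the reduced-word monomials $B$. Reducing an arbitrary word by the relations keeps the polynomial coefficients inside $k[\dT]$, since both $\bgg_\alpha$ and the values $\psi_\lambda(H_\alpha^\lambda)$ lie in $k[\dT]$, so the same monomials span $\grhecke1_\lambda$ over $k[\dT]$; they are $k[\dT]$-linearly independent because $k[\dT]\hookrightarrow k[\dT]_\lambda$ is injective and any $k[\dT]$-relation is also a $\qA_\lambda$-relation. Thus $\grhecke1_\lambda$ is free over $k[\dT]$ on $B$. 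The filtration argument behind Theorem~\ref{PBW} shows $R1_\lambda$ is also spanned over $k[\dT]$ by the image of $B$, so the composite $k[\dT]^{(B)}\twoheadrightarrow R1_\lambda\xrightarrow{\bar\Phi}\grhecke1_\lambda$ sends the standard basis to the PBW basis and is an isomorphism; hence $\bar\Phi$ is an isomorphism on each block, and globally $R\xrightarrow{\sim}\grhecke$.

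For the grading I would check homogeneity relation by relation. The idempotent, polynomial, and non-fixed commutation relations are homogeneous because each monomial carries one factor $x$ of degree $2$ together with one $\tau$; in the fixed case the right-hand side $c_\alpha\langle\beta,\check\alpha\rangle$ is a scalar, which both forces and matches $\operatorname{deg}(\tau_\alpha^\lambda)=-2$. For the quadratic relation in the non-fixed case, the associative relation $H_\alpha^{s_\alpha(\lambda)}=s_\alpha(H_\alpha^\lambda)$ together with the identity $\psi_{s_\alpha(\lambda)}\circ s_\alpha=s_\alpha\circ\psi_\lambda$ (checked on each $P_\beta$) gives $\psi_{s_\alpha(\lambda)}(H_\alpha^{s_\alpha(\lambda)})=s_\alpha(\psi_\lambda(H_\alpha^\lambda))$; since $s_\alpha$ preserves polynomial degree, $\operatorname{deg}(\tau_\alpha^{s_\alpha(\lambda)})=\operatorname{deg}(\tau_\alpha^\lambda)=\tfrac12\operatorname{deg}(\psi_\lambda(H_\alpha^\lambda))$, and the two sides of the quadratic relation have equal degree.

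The main obstacle is the braid relation, which I would settle by a path-independence argument. Writing $\lambda_i=s_{\alpha_{i-1}}\cdots s_{\alpha_1}(\lambda)$ and $\gamma_i=s_{\alpha_1}\cdots s_{\alpha_{i-1}}(\alpha_i)$, step $i$ is fixed, i.e. $s_{\alpha_i}(\lambda_i)=\lambda_i$, precisely when $s_{\gamma_i}(\lambda)=\lambda$, so its degree contribution depends only on the root $\gamma_i$; by Lemma~\ref{permute} the permutation $p$ matches $\gamma_i$ on the left with $\gamma_{p(i)}$ on the right, so the fixed steps (each of degree $-2$) pair up. For a matched pair of non-fixed steps $i\leftrightarrow j=p(i)$ there is $u\in W^{\alpha,\beta}$ with $u(\alpha_i)=\beta_j\in\Pi$ and $u(\lambda_i)=\mu_j$; the associative and braid relations on $H$ give the equivariance $H_{\beta_j}^{\mu_j}=u(H_{\alpha_i}^{\lambda_i})$, and combined with $\psi_{u\nu}\circ u=u\circ\psi_\nu$ this yields $\psi_{\mu_j}(H_{\beta_j}^{\mu_j})=u(\psi_{\lambda_i}(H_{\alpha_i}^{\lambda_i}))$, whence the two steps have equal degree. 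Summing the contributions shows both braid words have the same total degree, so the braid relation is homogeneous. With all relations homogeneous the grading passes to $\tilde R$ and, the polynomial-torsion submodule being graded, to $R$, and finally to $\grhecke$ through the isomorphism of the first part.
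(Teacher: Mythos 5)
Your proof is correct, and although it shares the paper's skeleton (match the two presentations, then check that every defining relation of $R$ is homogeneous), it genuinely differs at the three nontrivial points, in each case doing more than the paper does. For the isomorphism, the paper offers a single sentence --- that the presentation of $R$ is just the presentation of $\qheckeh$ restricted to $\grhecke$ under the change of variables $\psi_\lambda$ --- whereas you actually construct $\Phi$, use $\Phi(\psi_\lambda(f))=f$ to match the quadratic relations, factor through $R$ via torsion-freeness of $\qheckeh$, and prove bijectivity blockwise by comparing free $k[\dT]$-bases coming from Theorem \ref{PBW}; this is exactly the argument the paper leaves implicit. For homogeneity of the braid relation, the paper records only the pairing identity $\langle\lambda_i,\alpha_i\rangle=\langle\mu_{p(i)},\beta_{p(i)}\rangle$ from Lemma \ref{permute}, which settles the matter only when $\operatorname{deg}(\tau_\alpha^\lambda)$ is a function of $\langle\lambda,\alpha\rangle$ (true for the datum attached to $\dhecke$, but not given for the arbitrary $H$ of the statement); your route through the $W^{\alpha,\beta}$-equivariance $H^{\mu_{p(i)}}_{\beta_{p(i)}}=u\bigl(H^{\lambda_i}_{\alpha_i}\bigr)$ --- the identity the paper itself extracts from the braid condition on the datum inside the proof of Theorem \ref{PBW} --- is what the general statement actually requires, so your argument is the more robust one. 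For the torsion relations, the paper rewrites them as explicit braid-like relations (section \ref{PBW-section}) and checks degrees using $\operatorname{deg}(\bgg_\alpha)=-2$, while you instead observe that the right polynomial-torsion submodule of the graded algebra $\tilde R$ is itself graded, so the grading descends to $R$ automatically; this is cleaner and avoids the computation, but you should record the one-line justification (compare top homogeneous components: $If=0$ with $f\neq 0$ forces the top component of $I$ to be killed by the top component of $f$, and torsion elements form a submodule because $k[\dT]$ is a commutative graded domain). Finally, one wrinkle you smooth over with the words ``scalar multiple'': the fixed-weight commutation relation of $\tilde R$ carries the factor $c_\alpha$, while in $\qheckeh$ at $h_0=0$ one has $r_\alpha^\lambda f-s_\alpha(f)r_\alpha^\lambda=\bgg_\alpha(f)$ with no such factor, so the literal assignment $\tau_\alpha^\lambda\mapsto r_\alpha^\lambda$ honors that relation only after the harmless rescaling $\tau_\alpha^\lambda\mapsto c_\alpha r_\alpha^\lambda$ on fixed weights (or after normalizing $c_\alpha=1$); this is a defect of the paper's statement as much as of your write-up, but it deserves to be made explicit.
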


\begin{corollary}
Let $H$ be the data associated above to a degenerate affine Hecke algebra as above.
Then the quadratic relations for $R$ are given by,
\begin{gather*}
\tau_\alpha^{s_\alpha(\lambda)}\tau_\alpha^\lambda = \begin{cases}
0 & \text{if }\langle\alpha ,  \lambda\rangle = 0, \\
x_\alpha^\lambda & \text{if }\langle \alpha, \lambda\rangle = c_\alpha, \\
-x_\alpha^\lambda & \text{if }\langle \alpha, \lambda\rangle = -c_\alpha \\
1_\lambda & \text{else},
\end{cases},
\end{gather*}
whereas, for characteristic 2 let,
\begin{gather*}
\tau_\alpha^{s_\alpha(\lambda)}\tau_\alpha^\lambda = \begin{cases}
0 & \text{if }\langle\alpha ,  \lambda\rangle = 0, \\
(x_\alpha^\lambda)^2 & \text{if }\langle \alpha, \lambda\rangle = c_\alpha, \\
1_\lambda & \text{else},
\end{cases},
\end{gather*}

The grading is as follows: $\operatorname{deg}(1_\lambda) = 0$, $\operatorname{deg}(x_\alpha^\lambda) = 2$, for characterstic of $k$ not $2$,
\begin{align*}
\operatorname{deg}(\tau_\alpha^\lambda) = \begin{cases}
-2 & \text{if $s_\alpha(\lambda) = \lambda$,} \\
1 & \text{if $\langle \lambda, \alpha\rangle = \pm c_\alpha$,} \\
0 & \text{else,} 
\end{cases}
\end{align*}
whereas for characteristic of $k$ equal to $2$,
\begin{align*}
\operatorname{deg}(\tau_\alpha^\lambda) = \begin{cases}
-2 & \text{if $s_\alpha(\lambda) = \lambda$,} \\
2 & \text{if $\langle \lambda, \alpha\rangle = c_\alpha$,} \\
0 & \text{else.} 
\end{cases}
\end{align*}
\end{corollary}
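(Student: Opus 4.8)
The plan is to derive this corollary purely as the specialization of the preceding proposition to the explicit datum $H$ attached to the degenerate affine Hecke algebra, where $h_0 = 0$ so that $\qA \cong S(X)$ and the $P_\beta$ are linear (in particular $P_{-\alpha} = -P_\alpha$). That proposition already supplies the isomorphism $R \to \grhecke$ with $x_\alpha^\lambda \mapsto P_\alpha - \langle\lambda,\alpha\rangle$, $\tau_\alpha^\lambda \mapsto r_\alpha^\lambda$, together with the quadratic relation $\tau_\alpha^{s_\alpha(\lambda)}\tau_\alpha^\lambda = \psi_\lambda(H_\alpha^\lambda)$ for $s_\alpha(\lambda)\neq\lambda$ and $=0$ for $s_\alpha(\lambda)=\lambda$. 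Hence the only task is to evaluate $\psi_\lambda(H_\alpha^\lambda)$ on this specific $H$, recalling that after the renaming $\psi_\lambda(P_\alpha) = x_\alpha + \langle\lambda,\alpha\rangle$.

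First I would run the case analysis for $\psi_\lambda(H_\alpha^\lambda)$ branch by branch, using $\langle s_\alpha(\lambda),\alpha\rangle = -\langle\lambda,\alpha\rangle$ to see that $\lambda$ and $s_\alpha(\lambda)$ occupy the two paired $\pm c_\alpha$ branches. In characteristic $\neq 2$: the branch $s_\alpha(\lambda)=\lambda$ (i.e. $\langle\lambda,\alpha\rangle = 0$) yields $0$; substituting into $c_\alpha - P_\alpha$ for $\langle\lambda,\alpha\rangle = c_\alpha$ gives $c_\alpha - (x_\alpha + c_\alpha) = -x_\alpha^\lambda$; substituting into $c_\alpha + P_\alpha$ for $\langle\lambda,\alpha\rangle = -c_\alpha$ gives $x_\alpha^\lambda$; and the generic branch $H_\alpha^\lambda = 1$ gives $1_\lambda$. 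In characteristic $2$ the two $\pm c_\alpha$ branches coincide and $\psi_\lambda\bigl((c_\alpha - P_\alpha)^2\bigr) = (x_\alpha^\lambda)^2$, while the generic branch again gives $1_\lambda$. These are exactly the two displayed systems of quadratic relations, matching the stated normalization of signs.

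For the grading I would verify the homogeneity hypothesis of the proposition: in every branch $\psi_\lambda(H_\alpha^\lambda)$ is homogeneous once $\deg x_\beta = 2$, of degree $0$, $2$, or (in characteristic $2$) $4$. The proposition then assigns $\deg\tau_\alpha^\lambda = -2$ for $s_\alpha(\lambda)=\lambda$ and $\deg\tau_\alpha^\lambda = \tfrac12 \deg\psi_\lambda(H_\alpha^\lambda)$ otherwise, which reads off as $1$ when $\langle\lambda,\alpha\rangle = \pm c_\alpha$ (resp. $2$ in characteristic $2$) and $0$ generically, as stated. The computation is entirely routine; the only points needing attention are the sign bookkeeping under the shift $\psi_\lambda$ and the internal consistency of the grading along an $s_\alpha$-orbit. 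The latter is guaranteed by $\langle s_\alpha(\lambda),\alpha\rangle = -\langle\lambda,\alpha\rangle$, which places $\lambda$ and $s_\alpha(\lambda)$ in paired branches and forces $\deg\tau_\alpha^\lambda = \deg\tau_\alpha^{s_\alpha(\lambda)}$ so that their sum matches $\deg\psi_\lambda(H_\alpha^\lambda)$; since the existence of the grading is already part of the preceding proposition, this serves only as a sanity check and presents no genuine obstacle.
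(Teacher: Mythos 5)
Your approach is correct and is essentially the paper's own. The printed proof attached to this corollary is really a proof of the \emph{preceding proposition} (that the presentation of $R$ holds and that the grading extends, the braid and polynomial-torsion relations being the only delicate points); the corollary itself is left implicit as exactly the routine specialization you carry out: substitute the explicit datum $H$ into the quadratic relation $\tau_\alpha^{s_\alpha(\lambda)}\tau_\alpha^\lambda = \psi_\lambda(H_\alpha^\lambda)$, and read off degrees from $\deg(\tau_\alpha^\lambda) = \frac{1}{2}\deg(\psi_\lambda(H_\alpha^\lambda))$ in the non-fixed case.

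One caveat: your arithmetic is right, but your claim that it ``matches the stated normalization of signs'' is not accurate as written. With the paper's definitions ($H_\alpha^\lambda = c_\alpha - P_\alpha$ when $\langle\lambda,\alpha\rangle = c_\alpha$, and $\psi_\lambda(P_\alpha) = P_\alpha + \langle\lambda,\alpha\rangle$), the branch $\langle\lambda,\alpha\rangle = c_\alpha$ yields $-x_\alpha^\lambda$ and the branch $\langle\lambda,\alpha\rangle = -c_\alpha$ yields $+x_\alpha^\lambda$ --- precisely what you compute --- whereas the corollary as printed attaches $+x_\alpha^\lambda$ to $\langle\alpha,\lambda\rangle = c_\alpha$ and $-x_\alpha^\lambda$ to $\langle\alpha,\lambda\rangle = -c_\alpha$. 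Under the standard reading that $\langle\alpha,\lambda\rangle$ and $\langle\lambda,\alpha\rangle$ denote the same evaluation, the printed statement has the two $\pm$ branches transposed relative to what the paper's own definitions force. This is almost certainly a sign slip in the paper (the two branches are exchanged by $\lambda\mapsto s_\alpha(\lambda)$, so nothing downstream is affected, and the grading only depends on $\langle\lambda,\alpha\rangle = \pm c_\alpha$); in characteristic $2$ the issue evaporates since $c_\alpha = -c_\alpha$ there. You should flag this discrepancy explicitly rather than assert verbatim agreement, but it does not constitute a gap in your derivation.
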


\begin{proof}
The presentation of the algebra $R$ is simply a recollection of the relations of $\qhecke$, restricted to $\grhecke$ and with the above change of variables.

To show that the above grading on generators extends to a grading on $\grhecke$ we need to show that all the relations are graded.
This is trivial with the exception of the braid relation and the relations pertaining to polynomial torsion.
In the case of the braid relation, we have
\begin{align*}
\langle \lambda_i , \alpha_i\rangle = \langle \lambda , s_{\alpha_1}\cdots s_{\alpha_{i-1}}(\alpha_i) \rangle.
\end{align*}
Applying lemma \ref{permute} to the expression $s_{\alpha_m}\cdots s_{\alpha_1} = s_{\beta_m}\cdots s_{\beta_1}$ we find a permutation $p$ of the set $\{1, \cdots , m\}$ with,
\begin{align*}
\langle \lambda_i , \alpha_i\rangle =
\langle \lambda , s_{\alpha_1}\cdots s_{\alpha_{i-1}}(\alpha_i) \rangle = 
\langle \lambda , s_{\beta_1}\cdots s_{\alpha_{p(i)-1}}(\beta_{p(i)}) \rangle = 
\langle \mu_{p(i)} , \beta_{p(i)} \rangle
\end{align*}
It follows that the braid relations are graded.

As for the polynomial torsion, section \ref{PBW-section} shows that the polynomial torsion relation may be replaced with the following relations for general $\lambda\in \dT$ and $\alpha,\beta\in \Pi$ distinct.
For $\lambda$ not fixed by either $s_\alpha, s_\beta$, then we may pick $t$ as in section \ref{PBW-section}. 
In that case, following the notation of \ref{PBW-section}, we have the following braid-like relation:
\begin{align*}
\tau_{\alpha_m}^{\lambda_m}\cdots \tau_{\alpha_1}^{\lambda_1}  - 
\tau_{\beta_m}^{\mu_m}\cdots \tau_{\beta_1}^{\mu_1} = 
s_{\alpha_m}\cdots s_{\alpha_{t+2}} (\bgg_{\alpha_{t+1}}(P)) \tau_{\alpha_m}\cdots \tau_{\alpha_{2t+2}},
\end{align*}
where,
\begin{align*}
P =& \prod_{i=1}^t s_{\alpha_t}\cdots s_{\alpha_{i+1}}(G_{\alpha_i}^{\lambda_{-i}}),\\
=& \tau_{\alpha_t}^{\lambda_t}\cdots \tau_{\alpha_1}^{\lambda_1}
\tau_{\alpha_1}^{\lambda_{-1}}\cdots \tau_{\alpha_t}^{\lambda_{-t}}.
\end{align*}
Here, $\lambda_{-i} = s_{\alpha_i}(\lambda_i)$.
The two terms on the left side of the relation have the same degree by the above description of $\langle \lambda_i , \alpha_i\rangle$.
The term $\Delta_{\alpha_{t+1}}(P)$ has the same degree as $\tau_{\alpha_{2t+1}}^{\lambda_{2t+1}}\cdots \tau_{\alpha_1}$ as the product expression for $P$ shows that it has the same terms with the exception of $\tau_{\alpha_t}^{\lambda_t}$, which has degree $-2$.
As the operator $\Delta_\alpha$ has degree $-2$, it follows that the above braid-like relation is graded.

\end{proof}

\subsection{Graded characters of irreducible representations}
Let $\grhecke$ be the quiver Hecke algebra with grading defined above.
We define an anti-involution $\iota : \grhecke\to \grhecke^{opp}$ as follows:
\begin{align*}
\iota(1_\lambda) =& 1_\lambda \\
\iota(x_\alpha^\lambda) =& x_\alpha^\lambda \\
\iota(\tau_\alpha^\lambda) =& \tau_\alpha^{s_\alpha(\lambda)}.
\end{align*}
The only difficulty in showing that $\iota$ is well defined is in showing that the braid relations are preserved under $\iota$.
Indeed, to show that 
\begin{align*}
\tau_{\alpha_1}^{\lambda_2}\cdots \tau_{\alpha_m}^{\lambda_{m+1}} - \tau_{\beta_1}^{\mu_2}\cdots \tau_{\beta_m}^{\mu_{m+1}} =  
s_{\alpha_{2t+1}}\cdots s_{\alpha_{t+2}}\bgg_{\alpha_{t+1}}(P)
\tau_{2t+2}^{\lambda_{2t+3}}\cdots r_{\alpha_m}^{\lambda_{m+1}},
\end{align*}
it suffices to multiply the left side of the equation on the left by $\tau_{\alpha_1}\cdots \tau_{\alpha_t} \tau_{\alpha_t} \cdots \tau_{\alpha_1}$ and simplify as in section \ref{PBW-section}.

As $\operatorname{deg}(\tau_\alpha^\lambda) = \operatorname{deg}(\tau_\alpha^{s_\alpha(\lambda)})$, we see that $\iota$ is a graded anti-involution.
Let $V$ be a graded $\grhecke$-module.
Then $V^\vee:=\hom_{k}(V,k)$ is naturally a $\grhecke^{opp}$-module, which we consider as a $\grhecke$-module via the map $\iota$.
We see that the graded character of $V^\vee$ is given by switching $v$ and $v^{-1}$ in the graded character of $V$.

\begin{proposition}
Let $\grhecke$ be the quiver Hecke algebra associated to a degenerate affine Hecke algebra as above.
Let $V$ be an irreducible graded representation of $\grhecke$.
There is a grading shift $V\{\ell\}$ of $V$ such that the graded character of $V\{\ell\}$ is invariant under the substitution $v\mapsto v^{-1}$.
\end{proposition}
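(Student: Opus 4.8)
The plan is to show that the irreducible $V$ is self-dual up to a grading shift, and then to read off the symmetry of the graded character from the duality computation recorded just above the statement. Throughout, write $\mathrm{ch}_v(V)=\sum_d(\dim V_d)v^d$ and let $\overline{(\cdot)}$ denote the substitution $v\mapsto v^{-1}$; the preceding remark says exactly that $\mathrm{ch}_v(V^\vee)=\overline{\mathrm{ch}_v(V)}$. The first (routine) step is to observe that $V^\vee$ is again irreducible: graded submodules of $V^\vee$ are the annihilators, under the $\iota$-twisted pairing, of graded quotients of $V$, so simplicity of $V$ forces simplicity of $V^\vee$.

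The central step is to prove that $V^\vee\cong V\{\ell\}$ for some $\ell\in\Z$, and I would break this into two parts. First, the underlying ungraded modules are isomorphic. Since $\iota$ fixes every idempotent $1_\lambda$ and every polynomial generator $x_\alpha^\lambda$, its restriction to the commutative subalgebra $\bigoplus_\lambda k[\dT]1_\lambda$ is the identity; hence $1_\lambda V^\vee\cong (1_\lambda V)^{*}$ for each $\lambda$, so $V^\vee$ has the same weight multiplicities $\dim 1_\lambda V$ and the same central character as $V$. Using the classification of the nilpotent simple modules from Section~\ref{graded-hecke-algebra} — equivalently, the simple modules of $\dhecke$ under the equivalence established there, together with the structural input of Section~\ref{PBW-section} (the PBW basis and the matrix-ring description of the weight Hecke algebra) — one identifies the ungraded $V^\vee$ with the ungraded $V$. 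Second, a graded simple module carries a unique grading up to shift: as $V$ is finite-dimensional and graded simple, $\End_{\grhecke}(V)$ is a graded division algebra concentrated in degree $0$, and for a $\Z$-graded algebra this forces any two gradings on a fixed ungraded simple to differ by an overall shift. Combining the two parts gives $V^\vee\cong V\{\ell\}$.

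The remainder is bookkeeping together with one parity observation, which is the only genuinely new computation. From $V^\vee\cong V\{\ell\}$ and $\mathrm{ch}_v(V^\vee)=\overline{\mathrm{ch}_v(V)}$ we obtain $\overline{\mathrm{ch}_v(V)}=v^{\ell}\,\mathrm{ch}_v(V)$, so $\mathrm{ch}_v(V)$ is palindromic about $-\ell/2$ and the shift $V\{\ell/2\}$ will do provided $\ell$ is even. To see this, note that every defining relation of $R$ is graded and that every closed cycle in the Cayley graph of the $W$-action on the weight orbit of $V$ has even total $\tau$-degree: the quadratic relations $\tau_\alpha^{s_\alpha(\lambda)}\tau_\alpha^\lambda$ equal $0$, $\pm x_\alpha^\lambda$, or $1_\lambda$ (all of even degree), while a braid relation equates two words of equal degree. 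Hence there is a well-defined parity function $p\colon \Lambda\to\Z/2$ with $\deg\tau_\alpha^\lambda\equiv p(s_\alpha(\lambda))-p(\lambda)\pmod 2$, so each weight space $1_\lambda V$ is concentrated in a single degree parity. Comparing parities weight space by weight space in $\overline{\mathrm{ch}_v(V)}=v^{\ell}\mathrm{ch}_v(V)$ then forces $\ell\equiv 0\pmod 2$, and $V\{\ell/2\}$ has graded character invariant under $v\mapsto v^{-1}$.

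I expect the main obstacle to be the central step, specifically pinning down that the $\iota$-dual of the simple $V$ is $V$ itself rather than a different simple with the same formal data. This is where the classification and the structure theory of Section~\ref{graded-hecke-algebra} and Section~\ref{PBW-section} must be invoked, via the observation that $\iota$ preserves both the central character and the entire weight-multiplicity function; the uniqueness of grading and the evenness of $\ell$ are comparatively light once this self-duality is in hand.
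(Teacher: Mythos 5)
Your proposal follows the same skeleton as the paper's proof: form $V^\vee$ via the anti-involution $\iota$, show $V\cong V^\vee$ as ungraded modules (you and the paper both ultimately rest this on the fact that an irreducible module is determined by its ungraded character, which the paper asserts rather than proves), invoke Schur's lemma for uniqueness of the grading up to shift, deduce $\overline{\mathrm{ch}_v(V)}=v^{\ell}\,\mathrm{ch}_v(V)$, and finish by a parity argument showing $\ell$ is even. The difficulty is that the parity statement is where essentially all of the paper's work lies, and your justification of it does not hold up.

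You claim a parity function $p\colon\Lambda\to\Z/2$ with $\deg\tau_\alpha^\lambda\equiv p(s_\alpha(\lambda))-p(\lambda)\pmod 2$ exists \emph{because} the quadratic relations have even degree and the braid relations equate words of equal degree. This inference is invalid: well-definedness of $p$ requires the total $\tau$-degree of \emph{every} closed walk in the orbit graph to be even, and the closed walks are not generated by the relation cycles. Concretely, the orbit graph is the Schreier graph of $\operatorname{stab}_W(\lambda)\le W$; after gluing $2$-cells along all quadratic and braid cycles one obtains a complex whose fundamental group is $\operatorname{stab}_W(\lambda)$ itself (it is the free quotient of the simply connected Cayley complex of $W$), not the trivial group. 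So your observations only show that the parity of a closed walk depends on the stabilizer element it represents; they say nothing about walks representing nontrivial stabilizer elements, and it is exactly these walks that must be controlled: one needs that any word $s_{\alpha_m}\cdots s_{\alpha_1}$ with $s_{\alpha_m}\cdots s_{\alpha_1}(\lambda)=\lambda$ has even total degree. The paper proves this by (i) conjugating to a standard parabolic weight $\lambda'=u^{-1}(\lambda)$ — legitimate in the degenerate case since every weight is $W$-conjugate to a standard parabolic one, and since conjugation by a path changes the degree by pairs $\deg\tau_\beta^{\mu}+\deg\tau_\beta^{s_\beta(\mu)}$, each even; (ii) noting that a \emph{reduced} word for an element of the (standard parabolic) stabilizer uses only simple reflections fixing $\lambda'$, so every letter has degree $-2$; and (iii) treating non-reduced words by induction via the deletion condition and Lemma \ref{permute}, which yields $i<j$ with $\langle\lambda_i,\alpha_i\rangle=\langle\lambda_j,\alpha_j\rangle$ and deletes the two letters without changing the parity. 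None of this Coxeter-theoretic input (nor a substitute, e.g. Steinberg's theorem that stabilizers in the simply connected case are generated by the reflections they contain, which combined with your even back-and-forth computation would rescue your approach) appears in your proposal, so the key step is a genuine gap.
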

\begin{proof}
By Schur's lemma, a non-graded irreducible $\grhecke$-module can have at most one grading, up to grading shift.
As an irreducible $\grhecke$-module is determined up to isomorphism by its ungraded character we find that $V$ and $V^\vee$ are isomorphic as ungraded modules.
It follows that the graded character of $V$ is $v^k$ times the graded character of $V^\vee$ for some $k$.
First, we claim that $k$ must be even.
It suffices to consider the proposition for the graded character of $V_\lambda$ as an $1_\lambda \grhecke 1_\lambda$-module, where $V_\lambda$ is some non-zero generalized eigenspace.
Let $v\in V_\lambda$ be an element of highest degree.
As $\operatorname{deg}(x_\alpha^\lambda) >0$, we see that $x_\alpha^\lambda v = 0$ for all $\alpha\in \Pi$.
We have $V_\lambda = 1_\lambda \grhecke 1_\lambda v$, so by the PBW-theorem for $\grhecke$ we find that $V_\lambda$ is the $k$-span over elements $\tau_{\alpha_m}^{\lambda_m} \cdots \tau_{\alpha_1}^{\lambda_1}$ with $\lambda = \lambda_1 = s_{\alpha_m}(\lambda_m)$, and the other $\lambda_i$ defined as usual by $\lambda_{i+1} = s_{\alpha_i}(\lambda_i)$.
We claim that the degree of such an element $\tau_{\alpha_m}^{\lambda_m} \cdots                  \tau_{\alpha_1}^{\lambda_1}$ must be even.
In characteristic $2$ this is automatic, as every $\tau_\alpha^\lambda$ has even degree.
Otherwise, the claim is equivalent to showing that the number of $i$ for which $\langle \lambda_i , s_{\alpha_i}\rangle = \pm c_{\alpha_i}$ is even.
There exists $u\in W$ with $\lambda' = u^{-1}(\lambda)$ a standard parabolic weight.
Given a decomposition $u = s_{\beta_n}\cdots s_{\beta_1}$, we easily see that the element, 
\begin{align*}
\tau_{\beta_1}\cdots\tau_{\beta_n}^\lambda\tau_{\alpha_k}^{\lambda_k}\cdots\tau_{\alpha_1}^{\lambda}\tau_{\beta_n}\cdots\tau_{\beta_1}^{\lambda'},
\end{align*}
has the same degree as $\tau_{\alpha_m}^{\lambda_m} \cdots \tau_{\alpha_1}^{\lambda_1}$ modulo 2, as the former element has for every $\tau_{\beta_i}^{\mu}$ term, a term of the form $\tau_{\beta_i}^{s_{\beta_i}(\mu)}$, and these two degrees add up to 2.

Thus, we are left to show that for $\lambda$ a parabolic weight and $s_{\alpha_m}\cdots s_{\alpha_1}(\lambda) = \lambda$ we must have the number of $i$ with $\langle \lambda_i , \alpha_i\rangle = \pm c_{\alpha_i}$ is even. 
If $s_{\alpha_m}\cdots s_{\alpha_1}$ is a reduced expression then each $\lambda_i = \lambda$, so that $\langle \lambda_i , \alpha_i \rangle = 0$ and the claim follows.

Note that 
\begin{align*}
\langle \lambda_i , \alpha_i\rangle = \langle \lambda, s_{\alpha_1}\cdots s_{\alpha_{i-1}}(\alpha_i)\rangle .
\end{align*}
If $s_{\alpha_m}\cdots s_{\alpha_1}$ is not reduced, then there exists $i<j$ with 
\begin{align*}
s_{\alpha_j}\cdots s_{\alpha_{i}} =& s_{\alpha_{j-1}}\cdots s_{\alpha_{i+1}} , \\
s_{\alpha_j}\cdots s_{\alpha_{i+1}} =& s_{\alpha_{j-1}}\cdots s_{\alpha_{i}},
\end{align*}
both of which are reduced expressions.
It follows for this choice of $i,j$ that,
\begin{align*}
\alpha_i = s_{\alpha_{i}}\cdots s_{\alpha_{j-1}}(\alpha_j),
\end{align*}
and hence,
\begin{align*}
\langle \lambda_i , \alpha_i \rangle =& 
\langle \lambda s_{\alpha_1}\cdots s_{\alpha_{i-1}}(\alpha_i) \rangle \\ 
=& 
\langle \lambda s_{\alpha_1}\cdots s_{\alpha_{j-1}}(\alpha_j) \rangle \\
= &
\langle \lambda_j , \alpha_j \rangle.
\end{align*}
Moreover, as the equation $s_{\alpha_j}\cdots s_{\alpha_{i+1}} = s_{\alpha_{j-1}}\cdots s_{\alpha_i}$ is an equality of reduced expressions, there is a bijection $p:\{i+1,\cdots j\}\xrightarrow{\sim} \{i,\cdots , j-1\}$ with the property that,
\begin{align*}
s_{\alpha_{i+1}}\cdots s_{\alpha_{k-1}}(\alpha_k) =& s_{i}\cdots s_{\alpha_{p(k)-1}}(\alpha_{p(k)}),\\
p(j) =& i.
\end{align*}
It follows that modulo 2, the degree of $\tau_{\alpha_m}^{\lambda_m}\cdots \tau_{\alpha_1}^{\lambda_1}$ is the same as the degree of the $\tau_{\beta_{m-2}}^{\mu_{m-2}}\cdots \tau_{\beta_1}^{\mu_1}$ where the sequence $\beta_1, \cdots , \beta_{m-2}$ is the same as the sequence $\alpha_1 , \cdots \hat{\alpha_i} , \cdots , \hat{\alpha_j}, \cdots , \alpha_m$, where the hat denotes ommision, and the $\mu_i$ are defined as usual, $\mu_1 = \lambda$, $\mu_{i+1} = s_{\beta_i}(\mu_i)$.

By induction it follows that the degree of any sequence $\tau_{\alpha_m}^{\lambda_m}\cdots \tau_{\alpha_1}^{\lambda_1}$ with $\lambda_1 = s_{\alpha_m}(\lambda_m)$ is even.
We have proved that for any irreducible graded $\grhecke$-module $V$ with $V_\lambda\not = 0$, the degree of any two elements in $V_\lambda$ differ by a multiple of two.
In particular the highest degree and lowest degree element in the graded character of $V_\lambda$ differ by $v^k$ where $k$ is even.
Put $\ell = \frac{k}{2}$.
We then have that $V\{-\ell\}$ is a module whose graded character is invariant under the substitution $v\mapsto v^{-1}$.
\end{proof}

\begin{remark}
As one can see, the result is true more generally if $\operatorname{deg}(\tau_\alpha^\lambda)$ depends only on $\langle \lambda, \alpha \rangle$. 
\end{remark}
\begin{corollary}
The category of finite representations of a degenerate affine Hecke algebra $\dhecke$ associated to a simply connected semi-simple root data is Morita equivalent to the category of ungraded finite representations of the associated quiver Hecke algebra $\grhecke$ for which the elements $x_\alpha^\lambda$ act nilpotently for all $\alpha\in \Pi, \lambda\in \dT$.
An irreducible representation $V$ of $\dhecke$ is associated to a unique graded irreducible representation of $\grhecke$, whose graded character is invariant under the substitution $v\mapsto v^{-1}$, and for which the substitution $v\mapsto 1$ yields the character of $V$.
\end{corollary}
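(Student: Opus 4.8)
The plan is to deduce the Morita equivalence by composing the chain of equivalences already built in the paper, and then to read off the statement about irreducibles from the preceding Proposition together with a gradability argument. Throughout I specialize $h_0=0$, so that $\ihecke$ is the degenerate affine Hecke algebra $\dhecke$ over $k$ and $\qA\cong\dA$ is the symmetric algebra, with $h\demazure_\alpha=\bgg_\alpha$; I also assume $k$ algebraically closed, so that all eigenvalues of $\qA$ lie in $k$. The weight-space decomposition together with Lemma \ref{commutativity-relation} and the central-character splitting described immediately after it gives an equivalence between finite-dimensional $\dhecke$-modules and finite-dimensional unital $\lhecke$-modules via $V\mapsto\bigoplus_\lambda V_\lambda$. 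Theorem \ref{proof} supplies an isomorphism $\qhecke\xrightarrow{\sim}\lhecke$, matching finite $\lhecke$-modules with finite $\qhecke$-modules, and the change of variables $x_\alpha^\lambda=P_\alpha-\langle\lambda,\alpha\rangle$ realizes $\grhecke$ as a subalgebra whose inclusion into the localized quiver Hecke algebra induces an equivalence between finite $\qhecke$-modules and finite $\grhecke$-modules on which each $x_\alpha^\lambda$ acts nilpotently. Composing these three equivalences yields the asserted Morita equivalence.

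Next I would track an irreducible finite $\dhecke$-module $V$ through this chain. Since every functor involved is an equivalence, the image $\bar V$ is a simple $\grhecke$-module on which the $x_\alpha^\lambda$ act nilpotently, and the dimensions of all weight spaces of $V$ are preserved at each step. The substantive point is that $\bar V$ is gradable. Fixing the finite Weyl-orbit block $1_\Lambda\grhecke 1_\Lambda$ through which $\bar V$ factors, the PBW statement of Theorem \ref{PBW} shows this block is free over its polynomial subalgebra with basis indexed by $W$; since $\deg x_\alpha^\lambda=2>0$ while a PBW basis element is a product of at most $\ell(w_0)$ generators $\tau_\alpha^\lambda$, whose degrees are bounded below, the block is a $\Z$-graded algebra bounded below in degree. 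A finite-dimensional simple module over such an algebra admits a grading, unique up to an overall shift by Schur's lemma; this is precisely the gradability and uniqueness invoked in the preceding Proposition. Let $\tilde V$ be a graded lift of $\bar V$.

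Applying the preceding Proposition to $\tilde V$, there is a shift $\tilde V\{\ell\}$ whose graded character is invariant under $v\mapsto v^{-1}$, and uniqueness up to shift makes this normalized lift canonical, giving the claimed \emph{unique} graded simple $\grhecke$-module attached to $V$. Finally, to compare characters, I observe that the substitution $v\mapsto 1$ forgets the grading and returns the ungraded formal character of $\bar V$; because the weight-space equivalence and the change of variables preserve the dimension of every weight space, this ungraded character coincides with the character of $V$.

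The main obstacle is the gradability step, since the grading on $\grhecke$ is not non-negative: one has $\deg\tau_\alpha^\lambda=-2$ whenever $s_\alpha(\lambda)=\lambda$, so the positively graded theory does not apply verbatim. The resolution is to work one central-character block at a time, where the negative degrees are bounded below thanks to the relation $\tau_\alpha^{s_\alpha(\lambda)}\tau_\alpha^\lambda=0$ at fixed points (which rules out arbitrarily long degree-lowering products) and the finiteness of $W$; standard existence-and-uniqueness results for simple modules over $\Z$-graded algebras bounded below then apply. The remaining verifications — preservation of irreducibility and of weight multiplicities along the equivalences, and the identification $\ihecke=\dhecke$ at $h_0=0$ — are formal.
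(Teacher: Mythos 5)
The first half of your argument --- composing the weight-space equivalence from Lemma \ref{commutativity-relation}, the isomorphism $\qhecke\xrightarrow{\sim}\lhecke$ of Theorem \ref{proof}, and the equivalence induced by the inclusion of $\grhecke$ into the localized quiver Hecke algebra on modules with nilpotent $x_\alpha^\lambda$-action --- is correct and is exactly what the paper takes as already established; the paper's own proof of the corollary consists entirely of the remaining point, namely that every ungraded irreducible nilpotent $\grhecke$-module admits a grading. It is precisely in this step that your proposal has a genuine gap.

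You reduce gradability to the assertion that a finite-dimensional simple module over a $\Z$-graded algebra whose grading is bounded below is automatically gradable, and your argument only establishes boundedness below for the block $1_\Lambda\grhecke 1_\Lambda$. That assertion is false: take $A=k[x]$ with $\deg x=2$, which is even non-negatively graded; the one-dimensional simple module on which $x$ acts by $1$ is not gradable, because on any finite-dimensional graded module a homogeneous element of positive degree acts nilpotently. Boundedness below is not the relevant condition. What is needed is that the simple module factor through a finite-dimensional \emph{graded} quotient of the block, and this is exactly where the nilpotency of the $x_\alpha^\lambda$ --- which your gradability argument never invokes --- must enter. The paper's route is: the center $Z(\grhecke)=\left(\bigoplus_{\lambda\in\dT}k[\dT]1_\lambda\right)^W$ is graded; on an irreducible nilpotent module each positive-degree central element acts by a scalar (Schur) which is also nilpotent, hence by zero; therefore the action factors through the quotient of the block by the ideal generated by the positive-degree central elements, and this quotient is finite-dimensional by Theorem \ref{PBW} (the block is free of finite rank over its polynomial subalgebra, which in turn is finite over its $W$-invariants). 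Gradability of simple modules over a finite-dimensional $\Z$-graded algebra is then the cited result \cite[Theorem 4.4.4]{nv}, not a fact about bounded-below graded algebras. Once this reduction is inserted, the rest of your argument --- uniqueness of the grading up to shift via Schur's lemma, the normalization supplied by the preceding Proposition, and the identification of the $v\mapsto 1$ specialization with the character of $V$ --- goes through as written.
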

\begin{proof}
We must show that every ungraded irreducible representation of $\grhecke$ has a grading compatible with the action of $\grhecke$.
As the center $Z(\grhecke)$ of $\grhecke$ is given by the space of invariants $(\bigoplus_{\lambda\in \dT}k[\dT]1_\lambda)^W$, it is a graded ideal of $\grhecke$. 
Moreover, the action of $\grhecke$ on an irreducible representation factors through the finite dimensional graded algebra $\grhecke / Z(\grhecke)$.
Then the claim follows from \cite[Theorem 4.4.4]{nv}.
\end{proof}

\subsection{A pair of adjoint functors} \label{weight-induction}

Fix $\lambda\in \iT$ and consider the algebra $\whecke:= 1_\lambda \lhecke 1_\lambda$, which we refer to as the \emph{weight Hecke algebra}.
There is a functor on finite dimensional representations which we will call the 
$\lambda$-weight restriction functor,
\begin{eqnarray*}
wRes_\lambda :  \ihecke-\mod \longrightarrow & 
\whecke-\mod, \\
V \mapsto & V_\lambda.
\end{eqnarray*}

The weight restriction functor admits a left adjoint, which we will call $wInd_\lambda$, or
the $\lambda$-weight induction functor,
\begin{eqnarray*}
wInd_\lambda : \whecke-\mod \longrightarrow & 
\ihecke-\mod, \\
V_\lambda \longrightarrow& \lhecke1_\lambda \otimes_{\whecke} V_\lambda.
\end{eqnarray*}

\begin{proposition}\label{irreducible-reps}
Let $\lambda\in \iT$ be a weight.
The following gives a construction of all irreducible representations $V$ of $\ihecke$ for which
$V_\lambda\not=0$, in terms
of irreducible representations of $\whecke$.
\begin{enumerate}
\item
Let $V_\lambda$ be a non-zero, irreducible $\whecke$-module.
Then,
$wInd_\lambda(V_\lambda)$ has a unique irreducible quotient, $L(wInd_\lambda(V_\lambda))$.
\item
Conversely, suppose that $V$ is an irreducible representation of $\ihecke$ with $V_\lambda\not=0$.
Then,
$V_\lambda$ is an irreducible ${}_\lambda \mathscr{H}_\lambda$-module, and the counit of the adjunction,
non-zero map
\[
wInd_\lambda(V_\lambda)\to V,
\]
is non-zero and identifies $V$ with $L(wInd_\lambda(V_\lambda))$.
\item
Finally, the kernel of the above map is the largest $\ihecke$-submodule $U$ 
of $wInd_\lambda(V_\lambda)$
for which $U_\lambda=0$. 
This kernel may be computed in terms of 
the $\whecke$-module structure on $V_\lambda$.
\end{enumerate}
\end{proposition}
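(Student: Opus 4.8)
The plan is to treat this as the standard idempotent-truncation (recollement) picture attached to the idempotent $1_\lambda\in\lhecke$, for which $\whecke = 1_\lambda\lhecke 1_\lambda$ is the truncated algebra. Here $wRes_\lambda$ is $V\mapsto 1_\lambda V$ and its left adjoint is $wInd_\lambda(V_\lambda)=\lhecke 1_\lambda\otimes_{\whecke}V_\lambda$, with counit $\varepsilon_V\colon wInd_\lambda(V_\lambda)\to V$, $a\otimes v\mapsto av$. The computation driving everything is $wRes_\lambda\circ wInd_\lambda\cong\mathrm{id}$, which follows from $1_\lambda\cdot(\lhecke 1_\lambda)=1_\lambda\lhecke 1_\lambda=\whecke$, so that $1_\lambda\bigl(\lhecke 1_\lambda\otimes_{\whecke}V_\lambda\bigr)=\whecke\otimes_{\whecke}V_\lambda=V_\lambda$. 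I record at the outset the structural fact that $wInd_\lambda(V_\lambda)$ is generated as an $\lhecke$-module by its $\lambda$-weight space $1_\lambda\otimes V_\lambda$, since $\lhecke 1_\lambda$ is generated by $1_\lambda$ over $\lhecke$; finiteness of $wInd_\lambda(V_\lambda)$ follows from the PBW basis of Theorem~\ref{PBW}, which exhibits $\lhecke 1_\lambda$ as finitely generated over $\whecke$.

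For (1), let $V_\lambda$ be a non-zero irreducible $\whecke$-module and set $M=wInd_\lambda(V_\lambda)$. For any submodule $U\subseteq M$, the space $1_\lambda U$ is a $\whecke$-submodule of $1_\lambda M=V_\lambda$, hence is $0$ or all of $V_\lambda$ by irreducibility. If $1_\lambda U=V_\lambda$ then $U\supseteq\lhecke\cdot 1_\lambda U=\lhecke\cdot V_\lambda=M$, using that $M$ is generated by $V_\lambda$; thus every proper submodule satisfies $1_\lambda U=0$. The sum of all proper submodules therefore still has vanishing $\lambda$-weight space, so it is again proper and is the unique maximal submodule $U_{\max}$; I define $L(wInd_\lambda(V_\lambda))=M/U_{\max}$. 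The same argument shows $U_{\max}$ is precisely the largest submodule with zero $\lambda$-weight space, a fact I reuse in (3).

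For (2), let $V$ be irreducible with $V_\lambda\neq 0$. Given a non-zero $\whecke$-submodule $W\subseteq V_\lambda$, the submodule $\lhecke W\subseteq V$ is non-zero, hence equals $V$; applying $1_\lambda$ and using $W=1_\lambda W$ gives $V_\lambda=1_\lambda\lhecke W=\whecke W=W$, so $V_\lambda$ is irreducible over $\whecke$. The counit $\varepsilon_V$ sends $1_\lambda\otimes v\mapsto v$, so it is non-zero, and its image is the submodule $\lhecke V_\lambda=V$; thus $\varepsilon_V$ is surjective onto the irreducible $V$. Since $wInd_\lambda(V_\lambda)$ has a unique irreducible quotient by (1), this identifies $V\cong L(wInd_\lambda(V_\lambda))$.

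For (3), surjectivity of $\varepsilon_V$ onto the irreducible $V$ makes $\ker\varepsilon_V$ a maximal submodule of $wInd_\lambda(V_\lambda)$; by the uniqueness in (1) it must equal $U_{\max}$, the largest submodule with zero $\lambda$-weight space. That this kernel is determined by the $\whecke$-module $V_\lambda$ is then automatic, since both $wInd_\lambda(V_\lambda)$ and its unique maximal submodule depend only on $V_\lambda$. I expect the only real friction to be bookkeeping in the locally unital, localized setting—verifying that $1_\lambda$ acts as the identity on the $\lambda$-components and that the identifications $1_\lambda\lhecke 1_\lambda=\whecke$ and $\lhecke\cdot V_\lambda=V$ are applied consistently—together with invoking Theorem~\ref{PBW} to ensure $wInd_\lambda$ lands in finite-dimensional modules; the representation-theoretic core is the routine unique-maximal-submodule argument.
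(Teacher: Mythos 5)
Your parts (1) and (2) are correct and essentially identical to the paper's argument: every proper submodule $U$ of $wInd_\lambda(V_\lambda)$ has vanishing $\lambda$-weight space because $1_\lambda U$ is a $\whecke$-submodule of $V_\lambda$ and $V_\lambda$ generates; the sum of all such submodules again has vanishing $\lambda$-weight space, giving the unique maximal submodule; and irreducibility of $V_\lambda$ in (2) comes from applying $1_\lambda$ to $\lhecke W = V$ (the paper phrases this via the image of $wInd_\lambda(U_\lambda)\to V$, but it is the same computation). Your derivation of the first sentence of (3) from the uniqueness in (1) is also fine, and arguably cleaner than the paper's remark that it ``follows from the proof of the first claim.''

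The gap is in the last sentence of (3). Arguing that the kernel ``is determined by the $\whecke$-module $V_\lambda$ because $wInd_\lambda(V_\lambda)$ and its unique maximal submodule depend only on $V_\lambda$'' is a tautology: any construction functorial in $V_\lambda$ ``depends only on $V_\lambda$,'' and this gives no means of computing anything. The content of that assertion, as the paper's proof shows, is an explicit weight-space-by-weight-space description of the kernel. Concretely, for $\lambda'$ in the orbit of $\lambda$, an element $u\in 1_{\lambda'}\, wInd_\lambda(V_\lambda)$ lies in the maximal submodule if and only if $1_\lambda \lhecke 1_{\lambda'}\, u = 0$: if this holds then $\lhecke u$ is a submodule with zero $\lambda$-weight space, and conversely any element of a submodule with zero $\lambda$-weight space satisfies it. The paper then identifies this subspace with $1_{\lambda'}\lhecke 1_\lambda \otimes V_\lambda^{\lambda'}$, where $V_\lambda^{\lambda'}\subset V_\lambda$ is the kernel of the action on $V_\lambda$ of the subset $1_\lambda\lhecke 1_{\lambda'}\lhecke 1_\lambda\subset \whecke$; this subspace is computed purely from the $\whecke$-module structure of $V_\lambda$, which is what ``computed in terms of the $\whecke$-module structure'' means. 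To complete your write-up you need to state and verify such a description (the if-and-only-if criterion above, together with its reformulation via $V_\lambda^{\lambda'}$), rather than appeal to functoriality.
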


\begin{proof}
For the first claim, suppose $V_\lambda$ is an irreducible $\whecke$-module and let $U\subsetneq wInd_\lambda(V_\lambda)$ be a proper $\whecke$-submodule. 
We claim that $U_\lambda=0$. 
If not, $V_\lambda$ irreducible implies that $U_\lambda = V_\lambda$. But, $V_\lambda$ generates 
$\lhecke1_\lambda\otimes_{\whecke} V_\lambda$ as an
$\ihecke$-module, so we would have $U=wInd_\lambda(V_\lambda)$, a contradiction.

Now we note that the interior sum of two submodules $U,U'\subset wInd_\lambda(V_\lambda)$
with $U_\lambda = U'_\lambda=0$ is a submodule, $U+U'$, with $(U+U')_\lambda=0$.
So there is a unique maximal proper submodule, categorized as the sum of 
all $\ihecke$-submodules $U$ with $U_\lambda=0$.

For the second claim, we have the easy fact, $wInd_\lambda(V_\lambda)_\lambda = 1_\lambda \lhecke 1_\lambda \otimes_{\whecke}V_\lambda \cong V_\lambda$.
If there were
a non-trivial submodule $U_\lambda \subset V_\lambda$, then the image $U'$ of the composition of maps
\[
wInd_\lambda(U)\to wInd_\lambda V_\lambda \to V
\]
would be a submodule of $V$ with $0\subsetneq U'_\lambda \subsetneq V_\lambda$. 
Thus, 
$U'$ would be a non-trivial $\ihecke$-submodule of $V$.

The last claim follows from the proof of the first claim. We show how
to describe the maximal proper submodule.

Let $U_{\lambda'}$ be the left $\iA$-span of the elements of the form
$r\otimes v\in 1_{\lambda'}\lhecke1_\lambda\otimes V_\lambda$ for which
$1_\lambda \lhecke1_{\lambda'} (r\otimes v) = 0$. Then $U_{\lambda'}$
is clearly the $\lambda'$-weight space of the maximal proper submodule.
We may describe this set as $1_{\lambda'}\lhecke1_\lambda\otimes V_\lambda^{\lambda'}$,
where $V_\lambda^{\lambda'}$ is the kernel of the action of 
$1_\lambda \lhecke1_{\lambda'}\lhecke1_\lambda\subset 
\whecke$ on $V_\lambda$.
\end{proof}

\subsection{The Demazure algebra}\label{demazure-algebra}
For simply connected root datum, $(X,Y,R,\check{R},\Pi)$,
we may identify the algebra, $End_{(\iA)^W}(\iA)$ 
with the Demazure algebra, $\nhecke$, an interpolating version of the affine nil-Hecke algebra ${}^0H$ of \cite[Section 2.1]{rouquier-qha}. As a vector space
this algebra is equal to a tensor product
\begin{align*}
\iA\otimes_\C \nhecke^f
\end{align*}
of $\iA$ with the finite Demazure algebra, $\nhecke^f$ of $W$.
the latter algebra is the algebra with generators $\tau_\alpha, \alpha\in\Pi$,
which satisfy the braid relation between $\tau_\alpha,\tau_\beta$, as well as the quadratic relation
\begin{align*}
\tau_\alpha^2 = h_0 \tau_\alpha.
\end{align*}
The algebra structure of $\nhecke$ is given by letting $\iA$ and $\nhecke^f$
be subalgebras, and giving the commutativity relation,
\begin{align*}
\tau_\alpha f- s_\alpha(f) \tau_\alpha = h\demazure_\alpha(f).
\end{align*}

The following theorem is an algebraic link between the Demazure algebra
and the weight Hecke algebra, $\whecke$,
for certain $\lambda\in \iT$.

It is clear that for a weight, $\lambda\in\iT $, the subalgebra
of $\whecke$ generated by $\iA_\lambda$ and
$r_\alpha^\lambda$ with $s_\alpha(\lambda)=\lambda$ is isomorphic to a Demazure algebra with possibly smaller root datum, $(X,Y,R^\lambda,\check{R}^\lambda,\Pi^\lambda)$, 
$\Pi^\lambda = \{\alpha\in \Pi\mid s_\alpha(\lambda) = \lambda\}$.
There is a special case when this subalgebra is the entirety of $1_\lambda \mathscr{H}1_\lambda$.

We can use simple Weyl group lemmas and the structure theorem of 
the quiver Hecke algebra $\mathscr{H}(G)$ to give the solution to this question.

\begin{theorem}
Suppose $\lambda\in \iT$ is a standard parabolic weight, \emph{i.e.}
the stabilizer of $\lambda$ in $W$ is a standard parabolic subgroup of $W$. 
Then the weight-Hecke algebra $\whecke$ is isomorphic
to the Demazure algebra associated to the root data,
$(X,Y,R^\lambda,\check{R}^\lambda, \Pi^\lambda)$.
\end{theorem}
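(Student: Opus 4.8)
The plan is to show that the obvious inclusion of a Demazure algebra into $\whecke$ is an isomorphism, the only genuine content being surjectivity. As noted just before the statement, the subalgebra $\mathcal{D}\subset\whecke$ generated by $\qA_\lambda 1_\lambda$ together with the generators $r_\alpha^\lambda$ for $\alpha\in\Pi^\lambda$ is a copy of the ($\lambda$-localized) Demazure algebra attached to $(X,Y,R^\lambda,\check R^\lambda,\Pi^\lambda)$: among these generators the defining relations of $\qhecke$ restrict to exactly the Demazure relations, namely the quadratic relation $(r_\alpha^\lambda)^2=h_0 r_\alpha^\lambda$, the braid relations (which hold because for $\alpha,\beta\in\Pi^\lambda$ the weight $\lambda$ is a standard parabolic weight with respect to $\{\alpha,\beta\}$, with full stabilizer $W^{\alpha,\beta}$), and the commutation $r_\alpha^\lambda f-s_\alpha(f)r_\alpha^\lambda=h\demazure_\alpha(f)$. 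So it remains to prove $\mathcal{D}=\whecke$.

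First I would pin down $\whecke$ using the PBW theorem (Theorem \ref{PBW}). Since $\qhecke1_\lambda$ is free as a right $\qA_\lambda$-module on a basis $B$ of reduced monomials indexed by $W$, and since a basis element $b_w$ carries the left idempotent $1_{w(\lambda)}$, left multiplication by $1_\lambda$ annihilates $b_w$ unless $w(\lambda)=\lambda$. Hence $\whecke=1_\lambda\qhecke1_\lambda$ is a free right $\qA_\lambda$-module on $\{b_w : w\in\operatorname{stab}_W(\lambda)\}$.

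Next comes the one place where the standard parabolic hypothesis is used. By assumption $\operatorname{stab}_W(\lambda)=W^\lambda$ is the standard parabolic subgroup generated by $\{s_\alpha:\alpha\in\Pi^\lambda\}$, so by the standard theory of parabolic subgroups of Coxeter groups every $w\in W^\lambda$ admits a reduced expression $w=s_{\alpha_n}\cdots s_{\alpha_1}$ with all $\alpha_i\in\Pi^\lambda$, and such an expression is reduced in $W$. For any such word the intermediate weights $\lambda_i=s_{\alpha_{i-1}}\cdots s_{\alpha_1}(\lambda)$ are all equal to $\lambda$, because each $s_{\alpha_j}$ fixes $\lambda$; therefore the associated monomial $r_{\alpha_n}^{\lambda}\cdots r_{\alpha_1}^{\lambda}$ is a product of generators of $\mathcal{D}$ and so lies in $\mathcal{D}$. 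Choosing the PBW basis $B$ so that $b_w$ is realized by such a $\Pi^\lambda$-word for each $w\in W^\lambda$ — the freedom to pick any reduced word per element being exactly what Theorem \ref{PBW} and Corollary \ref{structure} allow — I conclude $\{b_w:w\in W^\lambda\}\subset\mathcal{D}$, whence the free $\qA_\lambda$-module they span, which is all of $\whecke$, is contained in $\mathcal{D}$. Thus $\mathcal{D}=\whecke$. The map from the Demazure algebra of the smaller datum to $\whecke$ is then an isomorphism: surjective by the above, and injective because that Demazure algebra has its own PBW basis indexed by $W^\lambda$ which maps to the $\qA_\lambda$-independent family $\{b_w\}$.

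I expect the main obstacle to be bookkeeping rather than conceptual: verifying carefully that the restriction of the $\qhecke$-relations to $\{r_\alpha^\lambda:\alpha\in\Pi^\lambda\}$ produces no stray terms — in particular that the braids among these generators are the undeformed Demazure braid relations, which relies on all intermediate weights in such braids remaining at $\lambda$ — and matching the $\lambda$-localized polynomial coefficients on both sides. The genuine mathematical input is the collapse of all intermediate weights to $\lambda$, which is precisely where the hypothesis that $\lambda$ is standard parabolic (not merely parabolic) enters; for a non-standard-parabolic $\lambda$ some $b_w$ would necessarily pass through weights other than $\lambda$, and $\mathcal{D}$ would be strictly smaller than $\whecke$.
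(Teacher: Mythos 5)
Your proposal is correct and follows essentially the same route as the paper: the paper's proof likewise invokes Corollary \ref{structure} to see that $\whecke$ is spanned over $\qA_\lambda$ by reduced-word monomials $r_{\alpha_n}\cdots r_{\alpha_1}$ indexed by elements of $\operatorname{stab}_W(\lambda)$, and then uses the standard parabolic hypothesis to conclude that such reduced expressions involve only $s_\alpha$ with $\alpha\in\Pi^\lambda$, so these monomials lie in the Demazure subalgebra. Your write-up is somewhat more explicit than the paper's (spelling out the collapse of intermediate weights to $\lambda$ and using PBW freeness to settle injectivity, which the paper disposes of with its ``it is clear'' remark before the theorem), but the mathematical content is the same.
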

\begin{corollary}
If $\lambda\in \iT$ is a parabolic weight, then there is, up to isomorphism,
only one irreducible representation $V$ of $\ihecke$ with $V_\lambda\not=0$.
\end{corollary}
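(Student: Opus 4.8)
The plan is to pin down $\whecke = 1_\lambda\lhecke 1_\lambda$ completely from the PBW description of $\lhecke$ and then recognize it as the Demazure algebra for the sub-datum. First I would invoke Theorem \ref{PBW}: $\lhecke 1_\lambda$ is a free right $\qA_\lambda$-module on the basis $B = \{r_w^\lambda\}_{w\in W}$ attached to reduced words, where $r_w^\lambda$ is the arrow $\lambda\to w(\lambda)$. Left-multiplying by $1_\lambda$ keeps exactly those $w$ with $w(\lambda)=\lambda$, so $\whecke$ is free over $\qA_\lambda$ with basis $\{r_w^\lambda : w\in W^\lambda\}$, where $W^\lambda = \operatorname{stab}_W(\lambda)$. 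Since $\lambda$ is \emph{standard} parabolic, $W^\lambda$ is the standard parabolic subgroup generated by $\{s_\alpha : \alpha\in\Pi^\lambda\}$ with $\Pi^\lambda = \{\alpha\in\Pi : s_\alpha(\lambda)=\lambda\}$; in particular $\operatorname{rank}_{\qA_\lambda}\whecke = |W^\lambda|$.

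Next I would write down the candidate homomorphism. For $\alpha\in\Pi^\lambda$ the relations of $\qhecke$ specialize (using $s_\alpha(\lambda)=\lambda$) to $(r_\alpha^\lambda)^2 = h_0 r_\alpha^\lambda$ and $r_\alpha^\lambda f - s_\alpha(f)r_\alpha^\lambda = h\demazure_\alpha(f)$, while the braid relations among $r_\alpha^\lambda,r_\beta^\lambda$ for $\alpha,\beta\in\Pi^\lambda$ hold because $\lambda$ is a standard parabolic weight with respect to each $\{\alpha,\beta\}\subseteq\Pi^\lambda$. These are precisely the defining relations of the Demazure algebra $\nhecke$ of $(X,Y,R^\lambda,\check{R}^\lambda,\Pi^\lambda)$, formed over the localized coefficient ring $\qA_\lambda$, so sending $\tau_\alpha\mapsto r_\alpha^\lambda$ and $f\mapsto f$ yields a well-defined algebra map $\Phi$ from that Demazure algebra onto the subalgebra of $\whecke$ generated by $\qA_\lambda$ and $\{r_\alpha^\lambda\}_{\alpha\in\Pi^\lambda}$.

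The heart of the argument---and the step I expect to be the main obstacle---is showing $\Phi$ is surjective onto all of $\whecke$, i.e. that every basis element $r_w^\lambda$ with $w\in W^\lambda$ already lies in the Demazure subalgebra. Here I would exploit that $W^\lambda$ is standard parabolic: each $w\in W^\lambda$ has a reduced expression $w = s_{\alpha_n}\cdots s_{\alpha_1}$ with all $\alpha_i\in\Pi^\lambda$, and since every partial product $s_{\alpha_{i-1}}\cdots s_{\alpha_1}$ again lies in $W^\lambda$ and so fixes $\lambda$, all intermediate weights collapse, $\lambda_i=\lambda$. Thus $r_{\alpha_n}^\lambda\cdots r_{\alpha_1}^\lambda$ is a genuine word in the Demazure generators, and by the filtration lemma underlying Corollary \ref{structure} it agrees with $r_w^\lambda$ modulo $\mathscr{F}^{\,\ell(w)-1}$. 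Inducting on $\ell(w)$ places every $r_w^\lambda$ in the image of $\Phi$, so $\Phi$ is surjective. (This is exactly where \emph{standard}---rather than merely---parabolic is needed: if $\lambda$ were only parabolic the intermediate weights would leave $\lambda$ and the corner would also involve generators $r_\alpha^\mu$ with $\mu\neq\lambda$.) Finally, the source of $\Phi$ is itself free over $\qA_\lambda$ of rank $|W^\lambda|$ on $\{\tau_w\}_{w\in W^\lambda}$, matching the rank of $\whecke$; a surjection of free modules of equal finite rank over the commutative ring $\qA_\lambda$ is an isomorphism, so $\Phi$ is an isomorphism of algebras.

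For the corollary I would first reduce a general parabolic $\lambda$ to the standard case: choosing a minimal-length $w$ with $w(\lambda)$ standard parabolic, the reduced word for $w$ strictly moves the weight at each step, so $r_w^\lambda$ is invertible in the localization, and conjugation by it identifies $\whecke$ with $1_{w(\lambda)}\lhecke 1_{w(\lambda)}$ and $V_\lambda$ with $V_{w(\lambda)}$. By the theorem the latter corner is the Demazure algebra, which by Pittie--Steinberg is $\cong M_{|W^\lambda|}$ over the \emph{local} ring $(\qA^{W^\lambda})_\lambda$, and hence admits a unique simple module. Proposition \ref{irreducible-reps} then converts this into a bijection between the irreducible $\ihecke$-modules $V$ with $V_\lambda\neq 0$ and the simple $\whecke$-modules, giving the asserted uniqueness up to isomorphism.
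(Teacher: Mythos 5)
Your treatment of the standard parabolic case is correct and is essentially the paper's own argument. The paper's proof also runs: Corollary \ref{structure} (resp.\ Theorem \ref{PBW}) shows $\whecke$ is spanned (resp.\ free) over $\qA_\lambda$ by the elements $r_w^\lambda$ with $w$ in the stabilizer $W^\lambda$; because the stabilizer is \emph{standard} parabolic, each such $w$ has a reduced expression using only $s_\alpha$, $\alpha\in\Pi^\lambda$, so these elements are words in the Demazure generators; simple connectedness of the sub-datum plus Pittie--Steinberg makes the resulting Demazure algebra a matrix algebra over invariants, hence (modulo the central character) a matrix algebra over the field with a unique simple module; Proposition \ref{irreducible-reps} transports this to $\ihecke$. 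Your explicit verification of the relations for the map $\Phi$ and your injectivity argument (a surjective map of free $\qA_\lambda$-modules of equal finite rank over a commutative ring is an isomorphism) fill in steps the paper leaves implicit; that is a genuine improvement in rigor, not a different route.

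The genuine gap is your final paragraph, reducing an arbitrary parabolic weight to a standard parabolic one by conjugation. You claim that if $w$ has minimal length with $w(\lambda)$ standard parabolic, then since each step of a reduced word strictly moves the weight, $r_w^\lambda$ is invertible. This is false: when $s_\alpha(\mu)\not=\mu$ the quadratic relation gives $r_\alpha^{s_\alpha(\mu)}r_\alpha^\mu = G_\alpha^\mu$, and $G_\alpha^\mu$ is only required to be a \emph{non-zero} element of $\qA_\mu$, not a unit. For the Hecke datum of Theorem \ref{proof}, $G_\alpha^\mu = (c_\alpha+q_\alpha P_{-\alpha})(P_{-\alpha}-c_\alpha)(-P_{-\alpha})^{-2}$, which vanishes at $\mu$ in the resonant cases; the paper itself records that $1_{s_\alpha(\lambda)}T_{s_\alpha}1_\lambda$ is invertible only when $\lambda(P_{-\alpha})\not= c_\alpha, -q_\alpha^{-1}c_\alpha$. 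So conjugation by $r_w^\lambda$ does not identify the corner algebras at $\lambda$ and $w(\lambda)$, and no repair is possible, because the statement you are trying to prove in that generality is false. The paper's own $SL_3$ computation gives a counterexample: with $c_\alpha=c_\beta=1$ and $\lambda=(1,1,0)$, the weight $s_\beta(\lambda)=(1,0,1)$ is parabolic (its stabilizer $\langle s_\alpha s_\beta s_\alpha\rangle$ is conjugate to $\langle s_\beta\rangle$) but not standard parabolic, and \emph{both} non-isomorphic irreducibles $L(V_{(1,1,0)})$ and $L(V_{(0,1,1)})$ have one-dimensional weight space at $(1,0,1)$.

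What this reveals is that the corollary's hypothesis must be read as ``standard parabolic,'' which is what the theorem it follows assumes and what the paper's proof silently uses (``By assumption the stabilizer is generated by $s_\alpha$, $\alpha\in\Pi$, fixing $\lambda$''). Your first three paragraphs prove exactly that corrected statement, in the same way the paper does; the additional reduction you attempted to supply for merely parabolic weights is the step that fails.
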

\begin{proof}
By corollary \ref{structure}, we see that $\whecke$ is spanned
by products $r_{\alpha_n}\cdots r_{\alpha_1}$ with 
$w = s_{\alpha_n}\cdots s_{\alpha_1}$ a reduced expression for $w\in W$,
a Weyl group element which stabilizes $\lambda$. By assumption
the stabilizer is generated by $s_\alpha,\alpha\in \Pi$ fixing $\lambda$,
and a reduced expression will use only these terms $s_\alpha, \alpha\in \Pi^\lambda$.

Now, for $(X,Y,R,\check{R},\Pi)$ simply connected, a parabolic subgroup
corresponding to $\Pi^\lambda$ will also be simply connected. Thus, 
the subalgebra $\whecke\cong \nhecke$ will be a matrix algebra
over $\mathscr{A}^{W^\lambda}$. Modulo the kernel of the central character corresponding to the irreducible representation, the algebra is a matrix algebra over $\C$. 
Thus, the weight Hecke algebra $\whecke$ has only one irreducible representation
with a non-zero weight $\lambda$. In fact, it's dimension is $\#W^\lambda$, the cardinality of the stabilizer of $\lambda$.
\end{proof}

\subsection{Example computation}
Let us take $(X,Y,R,\check{R}, \Pi)$ the standard root datum for $\operatorname{SL}_3$, and $\grhecke$ the quiver Hecke algebra with the grading given above.
Let $v$ stand for the grading shift, so that characters of finite $\grhecke$ modules are in the group ring $\Z[v^{\pm 1}][\dT]$.
Let $k$ be a field with characteristic not 2.
We have the roots, $\alpha = (1,-1,0),
\beta = (0,1,-1)$, which span the vector space $X\otimes_\Z k$.
Then $\mathbb{A} = S_k(X)$, the symmetric algebra is a polynomial algebra
in the variables $\alpha, \beta$. 
Pick a parabolic weight $\lambda = (1,1,0)\in \Hom_{alg}(\mathbb{A}, \C)$ and let $\Lambda$ be the $\mathfrak{S}_2$-orbit of $\lambda$. 
We compute the graded characters of each irreducible representation of $\grhecke$ whose associated representation of $\dhecke$ have central character $\Lambda$.

First, suppose $V$ is a finite irreducible $\grhecke$-module with non-zero $\lambda$-weight space.
Since $\lambda$ is a standard parabolic weight, there is only one such representation up to isomorphism and we may construct it as follows.
By the previous section, $1_\lambda \grhecke 1_\lambda$ is isomorphic to the nil-affine Hecke algebra for the root system $(X, Y, \{\alpha\}, \{\check{\alpha} , \{\alpha\})$. 
There are two ways of constructing the irreducible $1_\lambda \grhecke 1_\lambda$-module, $V_\lambda$.
Let $k$ be the trivial $\dA$-module with $x_\alpha,x_\beta$ acting by 0.
Then $1_\lambda \grhecke 1_\lambda \otimes_{\dA 1_\lambda} k$ has the correct dimension, and hence must be the unique irreducible representation of $1_\lambda \grhecke 1_\lambda$ with $x_\alpha,x_\beta$ acting nilpotently.

Alternatively we could induce from the finite nil Hecke algebra.
Let $\dA^{\langle s_\alpha\rangle}$ be the functions which are invariant under the action of $s_\alpha$.
Let $J_0$ be the positively graded elements of this subalgebra.
Then $J_0$ is a central ideal of $1_\lambda \grhecke 1_\lambda$, and we can form the representation $1_\lambda \grhecke 1_\lambda \otimes_{{}^0\ahecke^f}k$, where $k$ is the trivial nil-Hecke ${}^0\ahecke^f$-module with $\tau_\alpha^\lambda$ acting by 0.
Again, this representation has the correct dimension and so must be isomorphic to the unique irreducible nilpotent representation.

It is clear that the graded character of the first representation is $1+v^2$, whereas the graded character of the second one is $v^{-2}+1$.
We may shift the grading so that the graded character of this module is $v+v^{-1}$, which is invariant under the substitution $v\mapsto v^{-1}$.
Let $L(V_\lambda)$ be the irreducible quotient of the module weight-induced from $V_\lambda$.
Then the graded character of $L(V_\lambda)_{s_\beta(\lambda)}$ is simply $1$, and the character of $L(V_\lambda)_{s_\alpha s_\beta(\lambda)}$ is simply 0.
It is worth noting that the graded character of $L(V_\lambda)$ is invariant under the substitution $v\mapsto v^{-1}$.

The case of irreducible representations with non-zero $s_\alpha s_\beta(\lambda)$-weight space is identical as that is also a standard parabolic weight.
We are left to compute the irreducible modules $V_{s_\beta(\lambda)}$ over $1_{s_\beta(\lambda)}\grhecke 1_{s_\beta(\lambda)}$ whose associated irreducible $\grhecke$-module $L(V_{s_\alpha})$ has no $\lambda, s_\alpha s_\beta(\lambda)$-weight space.
This is equivalent to $\tau_\beta^\lambda\tau_\beta^{s_\beta(\lambda)} = x_\beta$, and $\tau_\alpha^{s_\alpha s_\beta(\lambda)}\tau_\alpha^{s_\beta(\lambda)} = -x_\alpha$ acting by 0 on $V_{s_\beta(\lambda)}$.
By the commutativity relation between $\tau_\gamma: = \tau_\alpha \tau_\beta \tau_\alpha^{s_\beta(\lambda)}$ and $x_\gamma := x_{\alpha+\beta}$, $\tau_\gamma x_{\gamma}+x_\gamma \tau_\gamma = 2$, we see that the constant map $2$ must be zero on $V_{s_\beta(\lambda)}$.
It follows that there is no such non-zero irreducible representation.

\subsection{Appendix: quiver Hecke algebras in type $A$}
We note that in type $A$ the (graded) quiver Hecke algebras appearing in \cite{rouquier-qha} are related to 
the algebra $\qhecke$ we have defined here with the standard root datum $GL_n$.
We first present the quiver Hecke algebra $\rhecke$ from \cite{rouquier-qha} which is shown there to be related to the degenerate affine Hecke algebras of type $A$.
\begin{definition}
Define the quiver $\Gamma$ with vertices $I=k$ and arrows $a\to a+1$, $a\in k$. 
Denote $ \lambda = (\lambda_1,\lambda_2,\dots , \lambda_n)\in k^n$ with $\lambda_i\in I, i=1,2,\dots ,n$.
Then $\rhecke$ is the algebra generated by idempotents $\{1_\lambda\}_{\lambda\in \Lambda}$, by variables $\{x_i^\lambda\}_{i=1}^n$, and $\{\tau_i^\lambda\}_{i\in I}$ subject to the following relations.

\begin{gather*}
1_\lambda 1_{\lambda'} = 1_\lambda \delta_{\lambda,\lambda'}, \\
x_i^\lambda 1_{\lambda'} = 1_{\lambda'} x_i^\lambda = \delta_{\lambda,\lambda'}x_i^\lambda, \\
\tau_i^\lambda 1_{\lambda'} = 1_{s_i(\lambda')}\tau_i^\lambda = \delta_{\lambda,\lambda'}\tau_i^\lambda, \\
x_i^\lambda x_j^\lambda = x_j^\lambda x_i^\lambda, \\
\tau_i x_j^\lambda - x_{s_i(j)}^{s_i(\lambda)} \tau_i^\lambda = \begin{cases}
-1_\lambda & \text{if $s_i(\lambda) = \lambda$ and $j=i$,} \\
1_\lambda & \text{if $s_i(\lambda) = \lambda$ and $j=i+1$,} \\
0 & \text{else},
\end{cases} \\
\tau_i^{s_i(\lambda)}\tau_i^\lambda = \begin{cases}
0 & \text{if }\lambda_i = \lambda_j \\
x_{i+1}^\lambda - x_{i}^\lambda & \text{if }\lambda_j = \lambda_i + 1 \\
x_{i}^\lambda - x_{i+1}^\lambda & \text{if }\lambda_j = \lambda_i - 1 \\
1_\lambda & \text{else},
\end{cases} \\
\tau_i^{s_j(\lambda)}\tau_j^\lambda - \tau_j^{s_i(\lambda)} \tau_i^\lambda = 0 \text{ if $\mid i-j \mid >1$,} \\
\tau_i^{s_{i+1}s_i(\lambda)}\tau_{i+1}^{s_i(\lambda)}\tau_i^\lambda - 
\tau_{i+1}^{s_i s_{i+1}(\lambda)}\tau_i^{s_{i+1}(\lambda)}\tau_{i+1}^\lambda = 0
\text{ if $\lambda_{i} = \lambda_{i+1} = \lambda_{i+2}$ or $\lambda_{i}\not = \lambda_{i+2}$}.
\end{gather*}

There is also a relation stating that $\rhecke$ have no polynomial torsion (see \cite[Section 3.2.2]{rouquier-qha}), which is equivalent to the missing braid like relations from \cite[Section 3.2.1]{rouquier-2km}.
\end{definition}
Let $(\Z^n, \Z^n, R,\check{R},\Pi)$ be a root data for $GL_n$ with standard basis $\{e_i\}_{i=1}^n$ of $\Z^n$ and simple roots $\alpha_i = e_{i+1}-e_i$, $i=1,\dots , n-1$. 
Let $c_\alpha = 1$ be a set of parameters, and let $\dhecke$ be the associated degenerate affine Hecke algebra, with $T_i = T_{\alpha_i}$.
We remark that although the weight lattice for $GL_n$ does not contain the fundamental weights, it is the case that every weight is conjugate to a standard parabolic weight.
Thus, our construction of $\qhecke, \grhecke$ may be carried out with no change.

Finally, let $V$ be a representation of the degenerate affine Hecke algebra $\dhecke$. 
In this case we identify $P_i = P_{e_i}$, $\dA = k[P_1, \dots , P_n]$, and $\lambda\in (k^*)^n$ a weight of $\dA$ via $\lambda(P_i) = \lambda_i$. 
By \cite[Theorem 3.11]{rouquier-2km}, we can turn $V$ into an $\rhecke$-module with $1_\lambda$ the projection onto the $\lambda$-generalized-eigenspace for $\dA$, $x_i^\lambda$ acting by $(P_i - \lambda_i)1_\lambda$, and $\tau_i^\lambda$ acting by,
\begin{align*}
\tau_i^\lambda \mapsto \begin{cases}
(P_i - P_{i+1}+1)^{-1} (T_i - 1)1_\lambda & \text{if $s_i(\lambda) = \lambda$,} \\
((P_i - P_{i+1})T_i + 1)1_\lambda & \text{if $\lambda_{i+1} = \lambda_{i}+1$,}  \\
\frac{P_i - P_{i+1}}{P_i - P_{i+1}+1} (T_i-1)1_\lambda + 1_\lambda & \text{else.}
\end{cases}
\end{align*}
Using our notation for $\alpha_i$, as well as the relation commutativity relation for $T_i$ and $1_\lambda$, we find that this is equivalent to,
\begin{align*}
\tau_i^\lambda \mapsto \begin{cases}
(1-P_{\alpha_i})^{-1} (T_i - 1)1_\lambda & \text{if $s_i(\lambda) = \lambda$,} \\
(-P_{\alpha_i}) 1_{s_i(\lambda)}T_i 1_\lambda & \text{if $\lambda_{i+1} = \lambda_{i}+1$,}  \\
(\frac{-P_{\alpha_i}}{1-P_{\alpha_i}}) 1_{s_i(\lambda)}(T_i)1_\lambda & \text{else.}
\end{cases}
\end{align*}

Using the isomorphism $\qhecke \to \lhecke$ of the previous section we find the following:
\begin{theorem}
There is a map $\rhecke\to \qhecke$ given by $x_i^\lambda\mapsto (X_i - \lambda_i) 1_\lambda\in \lA$, and 
\begin{align*}
\tau_i^\lambda\mapsto \begin{cases}
r_{\alpha_i}^\lambda & \text{if $s_i(\lambda)=\lambda$,} \\
(-\alpha_i) r_{\alpha_i}^\lambda & \text{if $\lambda_{i+1} = \lambda_i+1$} \\
(\frac{-\alpha_i}{1-\alpha_i}) r_{\alpha_i}^\lambda & \text{else.}
\end{cases}
\end{align*}
Moreover, this map gives a graded isomorphism of $\rhecke$ onto $\grhecke$.
\end{theorem}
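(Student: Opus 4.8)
The plan is to recognize the stated assignment as the composite of two maps that are already under control: the Brundan--Kleshchev--Rouquier realization $\rhecke\to\lhecke$ of \cite[Theorem 3.11]{rouquier-2km}, rewritten just above the statement in terms of $P_{\alpha_i}$ and $1_{s_i(\lambda)}T_i1_\lambda$, followed by the inverse of the isomorphism $\qhecke\xrightarrow{\sim}\lhecke$ of Theorem \ref{proof}. Under that isomorphism one has $r_{\alpha_i}^\lambda=1_{s_i(\lambda)}T_i1_\lambda$ in the non-fixed, non-exceptional case and $r_{\alpha_i}^\lambda=(c_\alpha+q_\alpha P_{-\alpha})^{-1}(T_i-q_{s_\alpha})1_\lambda$ in the fixed case; specializing $h_0=0$, $c_\alpha=1$, $q_\alpha=1$, $P_{-\alpha}=-P_\alpha$, these turn the three rewritten Rouquier formulas into exactly the three cases $r_{\alpha_i}^\lambda$, $(-\alpha_i)r_{\alpha_i}^\lambda$, $\tfrac{-\alpha_i}{1-\alpha_i}r_{\alpha_i}^\lambda$. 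So the first thing I would do is record this identification, reducing well-definedness of $\rhecke\to\qhecke$ to that of the Rouquier map.

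Rather than invoke the module-theoretic statement (awkward here, since $\lhecke$ is infinite dimensional and $P_i-\lambda_i$ is not nilpotent on $\lqA$), I would verify the finitely many defining relations of $\rhecke$ directly inside $\qhecke$, using the faithful Demazure--Lusztig representation on $\lqA$ from the proof of Theorem \ref{PBW}. The idempotent and $x$-relations are immediate. The commutativity relation $\tau_i x_j-x_{s_i(j)}\tau_i=\cdots$ follows from the two commutation rules for $r_\alpha^\lambda$ (namely $r_\alpha^\lambda f=s_\alpha(f)r_\alpha^\lambda$ when $s_\alpha(\lambda)\neq\lambda$, and $r_\alpha^\lambda f-s_\alpha(f)r_\alpha^\lambda=h\demazure_\alpha(f)$ when $s_\alpha(\lambda)=\lambda$) together with the fact that the prefactors lie in the commutative algebra $\lqA$. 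The quadratic relation is the one genuine computation: using $r_{\alpha_i}^{s_i(\lambda)}r_{\alpha_i}^\lambda=G_{\alpha_i}^\lambda$, $s_i(P_{\alpha_i})=-P_{\alpha_i}$, and the explicit $G_{\alpha_i}^\lambda$, one checks case by case that the prefactors collapse $G_{\alpha_i}^\lambda$ to the required value ($0$, $\pm x_{\alpha_i}^\lambda$, or $1_\lambda$). The braid relations transport from the braid relations for the $r_\alpha^\lambda$ at standard parabolic and exceptional weights established in Theorem \ref{PBW}, and absence of polynomial torsion is built into both algebras.

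Next I would identify the image with $\grhecke$ and prove bijectivity. The images of the $x_i^\lambda$ generate $k[\dT]1_\lambda$ for every $\lambda$, while the images of the $\tau_i^\lambda$ are invertible-$\qA_\lambda$-multiples of the $r_{\alpha_i}^\lambda$ whose pairwise products $\tau_i^{s_i(\lambda)}\tau_i^\lambda$ land in $k[\dT]$; thus these images form a family of intertwiners for a polynomial-valued datum $H'$ differing from $G$ by the invertible rational functions recorded in the statement, so by Theorem \ref{isomorphism-class} the subalgebra they generate together with $k[\dT]$ is $\grhecke$. Bijectivity then follows from the two PBW theorems: Theorem \ref{PBW} gives that $\qhecke1_\lambda$, hence the subalgebra, is free over $\qA_\lambda$ on a basis indexed by $\mathfrak{S}_n=W$, and $\rhecke1_\lambda$ is free over $k[\dT]$ on the analogous basis; since the map carries a reduced word $\tau_{i_\ell}\cdots\tau_{i_1}$ to the corresponding $r_{\alpha_{i_\ell}}\cdots r_{\alpha_{i_1}}$ up to a unit of $\qA_\lambda$, it sends basis to basis and is an isomorphism.

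Finally, for the grading I would compare the two degree functions. In both presentations $\deg 1_\lambda=0$ and $\deg x=2$, and $\deg\tau_i^\lambda$ depends only on $\langle\lambda,\alpha_i\rangle$: it is $-2$ when $s_i(\lambda)=\lambda$, $1$ when $\langle\lambda,\alpha_i\rangle=\pm c_{\alpha_i}$, and $0$ otherwise, matching the corollary following the grading proposition in Section \ref{graded-hecke-algebra}. Because each rescaling factor is homogeneous, the map preserves degree and is therefore a graded isomorphism. The step I expect to be the real obstacle is the middle one: showing that the rational prefactor $\tfrac{-\alpha_i}{1-\alpha_i}$, which is not a polynomial, nonetheless produces an element of the polynomially generated subalgebra $\grhecke$. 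This is precisely where the compatibility of the $\rhecke$-quadratic relations with a valid polynomial datum $H'$, and hence Theorem \ref{isomorphism-class}, must be used carefully, and where the sign bookkeeping in the identifications $P_{-\alpha}=-P_\alpha$, the shift $\psi_\lambda$, and $c_\alpha=1$ has to be carried out consistently.
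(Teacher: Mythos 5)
Your proposal follows essentially the same route as the paper: the paper obtains this theorem exactly by rewriting the map $\rhecke\to\lhecke$ coming from Rouquier's realization of degenerate affine Hecke algebra modules in terms of $P_{\alpha_i}$ and $1_{s_i(\lambda)}T_i1_\lambda$, and then transporting it through the isomorphism $\qhecke\xrightarrow{\sim}\lhecke$ of Theorem \ref{proof}; the paper states the result with no further argument. Your additional verifications --- checking the $\rhecke$-relations in the faithful polynomial representation, identifying the image with $\grhecke$ via Theorem \ref{isomorphism-class}, deducing bijectivity from the two PBW bases, and matching the degree functions --- are exactly the details the paper leaves implicit, and they are sound, including your correct identification of the image step (with its sign bookkeeping) as the only delicate point.
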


We can do something analogous for localized affine Hecke algebras, but only in the simply laced case.

\begin{definition}
Let $q_\alpha = q\in k^*$.
Define the quiver $\Gamma$ of \cite[Section 3.2.5]{rouquier-2km} with vertices $I=k^*$ and arrows $a\to q\cdot a$.
Denote $\lambda = (\lambda_1 , \dots , \lambda_n)\in (k^*)^n$.
Then $\rhecke$ is the algebra generated by idempotents $\{1_\lambda\}_{\lambda\in (k^*)^n}$, variables $\{x_i^\lambda\}_{i=1}^{n}$ and $\{\tau_i^\lambda\}_{i=1}^n$ with relations:
\begin{gather*}
1_\lambda 1_{\lambda'} = \delta_{\lambda,\lambda'} 1_\lambda, \\
x_i^\lambda 1_{\lambda'} = 1_{\lambda'}x_i^\lambda = \delta_{\lambda,\lambda'}x_i^\lambda, \\
\tau_i^\lambda 1_{\lambda'} = 1_{s_i(\lambda')}\tau_i^\lambda = \delta_{\lambda,\lambda'}\tau_i^\lambda, \\
x_i^\lambda x_j^\lambda = x_j^\lambda x_i^\lambda, \\
\tau_i^\lambda x_j^\lambda - x_{s_i(j)}^{s_i(\lambda)} \tau_i^\lambda = \begin{cases}
-1_\lambda & \text{if $s_i(\lambda) = \lambda$ and $j=i$,} \\
1_\lambda & \text{if $s_i(\lambda) = \lambda$ and $j=i+1$,} \\
0 & \text{else},
\end{cases} \\
\tau_i^{s_i(\lambda)}\tau_i^\lambda = \begin{cases}
0 & \text{if }\lambda_i = \lambda_j \\
x_{i+1}^\lambda - x_{i}^\lambda & \text{if }\lambda_j = \lambda_i \cdot q \\
x_{i}^\lambda - x_{i+1}^\lambda & \text{if }\lambda_j = \lambda_i \cdot q^{-1} \\
1_\lambda & \text{else},
\end{cases} \\
\tau_i^{s_j(\lambda)}\tau_j^\lambda - \tau_j^{s_i(\lambda)} \tau_i^\lambda = 0 \text{ if $\mid i-j \mid >1$,} \\
\tau_i^{s_{i+1}s_i(\lambda)}\tau_{i+1}^{s_i(\lambda)}\tau_i^\lambda - 
\tau_{i+1}^{s_i s_{i+1}(\lambda)}\tau_i^{s_{i+1}(\lambda)}\tau_{i+1}^\lambda = 0
\text{ if $\lambda_{i} = \lambda_{i+1} = \lambda_{i+2}$ or $\lambda_{i}\not = \lambda_{i+2}$}.
\end{gather*}
Again, there is also a relation (see \cite{rouquier-qha}) in $\rhecke$ saying that $\rhecke$ contains no polynomial torsion which accounts for the missing braid like relations in \cite{rouquier-2km}.
\end{definition}

Let $c_\alpha = 1, q_\alpha = q\in k^*\backslash \{\pm 1\}$ and $h_0 = q-1$.
Again, let $(\Z^n, \Z^n, R, \check{R},\Pi)$ be a root datum for $GL_n$.
Let $\{e_i\}_{i=1}^n$ be the standard basis of $\Z^n$, and let the root basis be defined by $\alpha_i = e_{i+1} - e_i$.
With this data we associate the affine Hecke algebra $\ahecke$ and its localized version $\lhecke$.
We identify $\aA$ with the ring $k[X_1^{\pm 1}, \dots, X_n^{\pm 1}]$, with $X_i$ corresponding to the exponential of $e_i$.
By \cite[Theorem 3.12]{rouquier-qha}, there is a map $\rhecke\to \lhecke$ given by mapping $x_i^\lambda\mapsto X_i \lambda_i^{-1} 1_\lambda$, and:
\begin{align*}
\tau_i^\lambda \mapsto \begin{cases}
\lambda_i X_{i+1}^{-1} (qX_i X_{i+1}^{-1} - 1)^{-1} (T_i - q)1_\lambda & \text{if $s_i(\lambda) = \lambda$}, \\
q^{-1} \lambda_i^{-1}X_{i+1} (X_i X_{i+1}^{-1} - 1)1_{s_i(\lambda)}T_i 1_\lambda & \text{if $\lambda(\alpha) = q$}, \\
\frac{X_i X_{i+1}^{-1} - 1}{qX_i X_{i+1}^{-1} - 1} 1_{s_i(\lambda)}T_i 1_\lambda & \text{else.}
\end{cases}
\end{align*}
Using our notation for $U_{x}$ the exponential of $x\in \Z^n$ in $\aA$ the group ring of $\Z^n$, we find the above mapping to be:
\begin{align*}
\tau_i^\lambda \mapsto \begin{cases}
\lambda_i X_{i+1}^{-1} (q U_{-\alpha_i} - 1)^{-1} (T_i - q)1_\lambda & \text{if $s_i(\lambda) =   \lambda$}, \\
q^{-1} \lambda_i^{-1}X_{i+1} (U_{-\alpha_i} - 1)1_{s_i(\lambda)}T_i 1_\lambda & \text{if         $\lambda(\alpha) = q$}, \\
\frac{U_{-\alpha_i} - 1}{qU_{-\alpha_i} - 1} 1_{s_i(\lambda)}T_i 1_\lambda & \text{else.}
\end{cases}
\end{align*}
It follows from the polynomial representations of $\lhecke, \qhecke$ that there is a mapping from  $\rhecke$ to $\qhecke$ given by mapping $x_i^\lambda\mapsto X_i^\lambda \lambda_i^{-1} 1_\lambda$, and:
\begin{align*}
\tau_i^\lambda \mapsto \begin{cases}
\lambda_i X_{i+1} r_{\alpha_i}^\lambda & \text{if $s_i(\lambda) = \lambda$}, \\
q^{-1} \lambda_i^{-1}X_{i+1} (U_{-\alpha_i} - 1) r_\alpha^\lambda & \text{if $\lambda(\alpha) = q$},\\
\frac{U_{-\alpha_i} - 1}{qU_{-\alpha_i} - 1} r_\alpha^\lambda & \text{else.}
\end{cases}
\end{align*}
Notice now, that the braid relation for $\tau_\alpha^\lambda$ is \emph{different} than the braid relation for $r_\alpha^\lambda$.

\begin{remark}
It should be noted that in \cite[Theorem 3.11]{rouquier-qha}, a grading is also given to the affine Hecke algebras of type $GL_n$ by use of an algebraic $W$-equivariant map $\dA\to \aA$. 
There is an algebraic obstruction to this approach in other types and we do not give gradings to affine Hecke algebras in this paper.
One could give irreducible representations of affine Hecke algebras gradings by using the correspondence given by \cite{lusztig} between their representation categories.

\end{remark}

\bibliographystyle{halpha}
\bibliography{refs}

\end{document}